\documentclass[11pt, amsfonts]{amsart}


\usepackage{amsmath,amssymb,amsthm,enumerate,comment}
\usepackage[all]{xy}
\usepackage{color}
\usepackage{graphicx}
\usepackage[T1]{fontenc}
\usepackage[dvipdfm,colorlinks=true]{hyperref}
\usepackage[colorlinks=true]{hyperref}


\textwidth 6in
\oddsidemargin .25in
\evensidemargin .25in
\parskip .02in


\SelectTips{eu}{12}

\theoremstyle{definition}
\newtheorem{theorem}{Theorem}[section]
\newtheorem{definition}[theorem]{Definition}
\newtheorem{lemma}[theorem]{Lemma}
\newtheorem{proposition}[theorem]{Proposition}
\newtheorem{corollary}[theorem]{Corollary}

\newtheorem{remark}[theorem]{\rm Remark}


\makeatletter
\@addtoreset{equation}{section} 
\makeatother

\DeclareMathOperator{\id}{id}

\DeclareMathOperator{\PSL}{PSL}

\newcommand{\RR}{\mathbb{R}}
\newcommand{\QQ}{\mathbb{Q}}
\newcommand{\ZZ}{\mathbb{Z}}

\newcommand{\Supp}{\mathrm{Supp}}
\newcommand{\PP}{\mathrm{PP}_+(\RR)}
\newcommand{\I}{\mathcal{I}}
\newcommand{\hf}{\widehat{f}}
\newcommand{\PL}{\mathrm{PL}_+([0,1])}

\makeatletter
\@namedef{subjclassname@2020}{%
  \textup{2020} Mathematics Subject Classification}
\makeatother

\title[Invariable generation of groups of piecewise projective homeomorphisms]{Invariable generation of certain groups of piecewise projective homeomorphisms of the real line}
\author{Shuhei Maruyama}
\address{School of Mathematics and Physics, College of Science and Engineering, Kanazawa University, Kakuma-machi, Kanazawa, Ishikawa, 920-1192, Japan}
\email{smaruyama@se.kanazawa-u.ac.jp}

\keywords{invariable generation; groups of piecewise projective homeomorphisms}

\begin{document}

\begin{abstract}
  We show that the following groups are invariably generated; the group of piecewise projective homeomorphisms of the real line, the group of piecewise $\mathrm{PSL}(2,\mathbb{Z})$ homeomorphisms of the real line, Monod's group $H(\mathbb{Z})$, the group of piecewise $\mathrm{PSL}(2,\mathbb{Q})$ homeomorphisms of the real line with rational breakpoints.
  We also show that the Higman--Thompson group $F_n$ for every $n \in \mathbb{Z}_{\geq 3}$ and the golden ratio Thompson group $F_{\tau}$ are invariably generated.
\end{abstract}

\maketitle

\section{Introduction}

A group $G$ is said to be \emph{invariably generated} if for every map $\chi \colon G \to G$, the set
\[
  \{ g^{\chi(g)} \colon g \in G \}
\]
generates $G$.
Here we use the symbol $g^h$ to denote $hgh^{-1}$.
The concept of invariable generation appeared in \cite{MR417300} (in a different language), and the term \emph{Invariable generation} was first used in \cite{MR1180190}.
A subgroup $H$ of $G$ is said to be \emph{classful} if $H$ intersects every conjugacy class of $G$.
It is easy to show that a group is invariably generated if and only if there are no proper subgroups which are classful.

It is easily verified that every abelian group is invariably generated.
Every finite group is also invariably generated (\cite{MR2852243}).
Wiegold showed in \cite{MR417300} that invariable generation is closed under extension, that is, for a group $G$, the invariable generation of a normal subgroup $N$ of $G$ and that of the quotient group $G/N$ imply that of $G$.
It follows that every virtually solvable group is invariably generated (see \cite{MR3272383} for example).
Contrastingly, the nonabelian free groups are not invariably generated (\cite{MR417300}).

In \cite{MR3272383}, Kantor, Lubotzky and Shalev studied invariable generation of linear groups.
After that, invariable generation of several infinite groups has been investigated.
For example, the following groups were shown to be invariably generated; the Grigorchuk group (\cite{MR3272383}), Richard Thompson's group $F$ (\cite{MR3621672}), the group $\PL$ of PL homeomorphisms of the interval $[0,1]$ and certain subgroups of it (\cite{MR4172581}), the Houghton groups (\cite{MR4379277}).
The following groups were shown to be not invariably generated; any nonelementary convergence group (\cite{MR3431613}), any acylindrically hyperbolic group (\cite{MR3715440}), Richard Thompson's groups $T$ and $V$ (\cite{MR3621672}) and the Higman--Thompson groups $T_n$ and $V_n$ for any $n \geq 2$ (\cite{2207.04235}).

Richard Thompson's group $F$ is known to have no nonabelian free subgroups, and its amenability is a major open question.
The von Neumann--Day problem asks whether every nonamenable group contains a nonabelian free subgroup; its counterexample was constructed by Ol'\v{s}anski\u{\i} (\cite{MR586204}).

Monod provided in \cite{MR3047655} a family of dynamically tractable counterexamples to the von Neumann--Day problem.
One of them is the following.
A homeomorphism $f$ of $\RR P^1 = \RR \cup \{ \infty \}$ is called a \emph{piecewise projective homeomorphism} if there exists a family of finitely many closed intervals of $\RR P^1$ which cover $\RR P^1$ and the restriction of $f$ to each interval coincides with a M\"{o}bius transformation of an element of $\PSL(2,\RR)$ on $\RR P^1$.
For a piecewise projective homeomorphism $f$ satisfying $f(\infty) = \infty$, the restriction $f|_{\RR}$ is called a \emph{piecewise projective homeomorphism of the real line}.
Let $\PP$ be the group of piecewise projective homeomorphisms of the real line.
Monod showed in \cite{MR3047655} that the group $\PP$ is not amenable and does not contain any nonabelian free subgroup, that is, it is a counterexample of the von Neumann--Day problem.

Let $K$ be a subgroup of $\PSL(2,\RR)$.
If the restrictions of $f \in \PP$ to each interval is a M\"{o}bius transformation of an element of $K$, then we call $f$ a \emph{piecewise $K$ homeomorphism of the real line}.

In \cite{MR4172581}, Matsuda and Matsumoto established a criterion for the invariable generation of a given subgroup of $\PL$ (see Theorem \ref{thm:MMthm_cond_inv_gen}).
Introducing and using an analogous criterion for the invariable generation of a subgroup of $\PP$, we show the following.

\begin{theorem}\label{mainthm}
  The following groups are invariably generated;
  \begin{enumerate}[$(1)$]
    \item the group $\PP$ of piecewise projective homeomorphisms of the real line,
    \item the group of piecewise $\PSL(2,\ZZ)$ homeomorphisms of the real line,
    \item Monod's group $H(\ZZ)$,
    \item the group of piecewise $\PSL(2,\QQ)$ homeomorphisms of the real line with rational breakpoints.
  \end{enumerate}
\end{theorem}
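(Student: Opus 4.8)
The plan is to follow the strategy pioneered by Matsuda--Matsumoto for the group $\PL$ and adapt it to the projective setting. The essential tool is an analogue of Theorem~\ref{thm:MMthm_cond_inv_gen}: a sufficient criterion, phrased in terms of germs at the endpoints of $\RR P^1$ (or equivalently, the behaviour of $f \in \PP$ near $\pm\infty$), guaranteeing that a subgroup $H \leq \PP$ meeting every conjugacy class must be all of $\PP$. First I would set up the germ data for an element $f$ acting on the real line. Near $+\infty$ and near $-\infty$, a piecewise projective homeomorphism agrees with a single M\"obius transformation, so to each $f$ one can attach a pair of germs, each the germ of a fixed element of $\PSL(2,\RR)$ fixing the relevant point of $\RR P^1$. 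The plan is to record, for each $f$, a suitable \emph{conjugacy invariant} built from these germs---for instance whether $f$ has a fixed point, and the multiplier or translation-type data of the germs at the ends---and to show that a classful subgroup $H$ must realise all possible values of this invariant.

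Next I would argue that the conjugacy-invariant data forces $H$ to contain elements with prescribed local dynamics at the two ends. Concretely, if $H$ is classful, then for every $f \in \PP$ there is $h \in H$ conjugate to $f$, hence $h$ shares the germ-theoretic invariants of $f$ up to the conjugation action. The key step is to choose test elements $f$ whose conjugacy classes are large enough that the resulting $h \in H$ generate: one typically wants one family of elements acting as hyperbolic (or parabolic) germs attracting toward an endpoint, together with elements supported in a bounded region, so that the combination of ``edge'' dynamics and ``interior'' dynamics produces generators of $\PP$. For the subgroup cases (2)--(4) one repeats the argument with the coefficients constrained to $\PSL(2,\ZZ)$, $\PSL(2,\QQ)$, or the relevant arithmetic data for $H(\ZZ)$, checking that the test elements one needs can be chosen within the subgroup and that the criterion's hypotheses survive the restriction.

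The main obstacle I expect is establishing the projective analogue of the Matsuda--Matsumoto criterion itself, rather than applying it. In the PL case the germs at a breakpoint form an abelian group (the positive reals under multiplication, via slopes), and the criterion exploits this abelian structure of germs to detect conjugacy invariants cleanly. In the projective case the local model is a stabiliser in $\PSL(2,\RR)$ of a point of $\RR P^1$, which is the affine group $\mathrm{Aff}(\RR)$ rather than just the multiplicative reals; so the germ data is richer (it includes a ``translation part'' in addition to a multiplier), and one must verify that the conjugation action interacts with this two-dimensional germ group in a way that still yields enough invariants to run the classful-implies-improper argument. A further delicate point will be the arithmetic subgroups: for $\PSL(2,\QQ)$ with rational breakpoints, and for $H(\ZZ)$, the available germs are discretised, so one must confirm that the conjugacy classes are still rich enough---and that the generating test elements lie in the subgroup---for the criterion to apply.
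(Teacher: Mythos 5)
Your overall strategy---adapting the Matsuda--Matsumoto criterion to the piecewise projective setting and then checking it for the arithmetic subgroups---is indeed the route the paper takes. However, there is a genuine gap at exactly the point you flag as the ``main obstacle'' but do not resolve: the germ of $f \in \PP$ at $\pm\infty$ lives in the affine group, and you leave open how to extract usable conjugacy invariants from this two-dimensional germ group. The paper's resolution is a specific structural reduction that is missing from your proposal: one first passes to the normal subgroup $\PP_1 \leq \PP$ of elements whose slope at both ends equals $1$, so that the end germs are pure translations $t \mapsto t + a$. This restores the abelian (indeed, $\RR$-valued) germ data of the PL case, makes intervals near infinity into fundamental domains of suitable elements, and allows the whole fundamental-domain machinery to run. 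Since $\PP_1$ contains the commutator subgroup, $\PP/\PP_1$ is abelian, hence invariably generated, and Wiegold's closure of invariable generation under extensions finishes case (1). Without this reduction (or a substitute for it), your plan of reading off ``multiplier or translation-type data'' directly in $\PP$ does not obviously produce the invariants you need.

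A second, related gap is that your sketch of how classfulness forces $H = G$ stays at the level of ``choose test elements whose conjugacy classes are large enough.'' The actual mechanism in the paper is more specific and is not implied by your description: one needs (i) transitivity of a classful $H$ on $G$-orbits in $\RR$ and on orbits inside half-lines $[v,\infty)$ (Conditions B'(1), B'(2)), proved by producing in $H$ conjugates of the translation $t \mapsto t+1$ and composing powers to hit prescribed points; and (ii) the ``monitoring intervals'' construction, in which an element $g$ translating by $1$ near $\pm\infty$ carries, as a conjugacy invariant, the \emph{information} $\phi_b^{-1} g^N \phi_a$ it induces between fundamental domains in its end linear zones. Because this information is a class invariant and every $f \in G(J_0)$ arises as the information of some $g$, a classful subgroup must contain elements realizing every such $f$, which yields an $X$-interval $I_0$ with $H|_{I_0} = G(I_0)$ (Condition B'(3)) and hence, by a bootstrapping argument over an exhaustion of $\RR$, all compactly supported elements. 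For the arithmetic cases (2)--(4), the surjectivity of the relevant class-invariant maps requires explicit constructions of piecewise $\PSL(2,\ZZ)$ elements joining prescribed orbit points with breakpoints in the allowed set (e.g.\ in $P_{\ZZ}$ for $H(\ZZ)$), which you correctly identify as delicate but do not carry out.
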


Here Monod's group $H(\ZZ)$ is the group of piecewise $\PSL(2,\ZZ)$ homeomorphisms of the real line whose breakpoints are contained in
\[
  P_{\ZZ} = \{ x \in \RR \colon \exists \text{ hyperbolic element } A \in \PSL(2,\ZZ) \text{ such that } Ax = x \}.
\]
Monod's group $H(\ZZ)$ does not contain any nonabelian free subgroup, and its amenability is unknown (\cite[Problem 12]{MR3047655}).

In Appendix \ref{sec:appendix}, slightly modifying the criterion for the invariable generation of a subgroup of $\PL$ given in \cite{MR4172581}, we show the following.
\begin{theorem}\label{thm:appendix_main}
  The following groups are invariably generated;
  \begin{enumerate}[$(1)$]
    \item the Higman--Thompson group $F_n$ for every $n \in \ZZ_{\geq 3}$,
    \item the golden ratio Thompson group $F_{\tau}$.
  \end{enumerate}
\end{theorem}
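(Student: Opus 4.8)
The plan is to derive both statements from the version of the Matsuda--Matsumoto criterion for subgroups of $\PL$ (Theorem \ref{thm:MMthm_cond_inv_gen}) after a mild reformulation, and then to check its hypotheses for each group. Recall that $F_n$ is realized inside $\PL = \mathrm{PL}_+([0,1])$ as the group of PL homeomorphisms whose breakpoints lie in $\ZZ[1/n]$ and whose slopes are integer powers of $n$, while $F_\tau$ is the group of PL homeomorphisms whose breakpoints lie in $\ZZ[\tau]$ and whose slopes are integer powers of $\tau$, where $\tau$ is the golden ratio satisfying $\tau^2 = \tau + 1$ (the case $n = 2$, that is, Thompson's group $F$ itself, being already covered by \cite{MR3621672}). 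Since \cite{MR4172581} phrases its criterion with the dyadic data of $F$ in mind, the first step is to isolate the abstract hypotheses the criterion actually consumes --- roughly, a condition on the germ homomorphisms at the two endpoints together with a transitivity (or chain-flexibility) condition on the action on the open interval $(0,1)$ --- and to record the slightly modified statement that replaces the dyadic data by these abstract conditions.

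Granting that reformulation, the second step is to verify the endpoint condition. Near $0$ every element of $F_n$ (respectively $F_\tau$) is of the form $x \mapsto s\,x$ with $s$ a power of $n$ (respectively of $\tau$), so the germ homomorphism at $0$ maps the group onto the infinite cyclic group $\langle n \rangle$ (respectively $\langle \tau \rangle$) inside $\RR_{>0}$, which is abelian; the same holds at $1$. Thus the endpoint germ groups are abelian in all cases, and this part of the hypothesis is immediate.

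The third step is to verify the dynamical hypothesis. For $F_n$ this follows from the standard $n$-adic subdivision combinatorics: $F_n$ acts transitively, in an order-preserving way, on the relevant $n$-adic subdivisions of $[0,1]$, so that any element can be conjugated to one supported in a prescribed interior subinterval and $F_n$ is generated by such elements. For $F_\tau$ one argues in the same spirit, but using the golden-ratio subdivision rule $\tau^{-1} + \tau^{-2} = 1$ coming from $\tau^2 = \tau + 1$. I expect the main obstacle to lie precisely here: unlike the balanced $n$-ary tree underlying $F_n$, the subdivision of an interval in $F_\tau$ produces two children of unequal lengths $\tau^{-1}$ and $\tau^{-2}$, so the clean self-similarity exploited for $F_n$ must be replaced by a more careful Fibonacci-type bookkeeping showing that the action on the $\ZZ[\tau]$-breakpoints still satisfies whatever transitivity the criterion demands. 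Once the germ condition and the dynamical condition are established, the reformulated criterion yields invariable generation of $F_n$ for every $n \in \ZZ_{\geq 3}$ and of $F_\tau$, which completes the proof.
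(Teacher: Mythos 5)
There is a genuine gap, on two fronts.

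First, your verification of the ``dynamical hypothesis'' for $F_n$ asserts that $F_n$ acts transitively on the relevant $n$-adic data. Transitivity on subdivisions (leaf counts of $n$-ary trees) is true, but what the Matsuda--Matsumoto criterion actually requires is transitivity of $G$ (Condition A(2)) and of every \emph{classful subgroup} (Condition B(1)) on a set $X$ that, by Condition A(1), must contain \emph{all} breakpoints, hence all of $\ZZ[1/n]\cap(0,1)$. For $n\geq 3$ the action of $F_n$ on $\ZZ[1/n]\cap(0,1)$ is \emph{not} transitive (e.g.\ $1/3$ and $2/3$ lie in different $F_3$-orbits, distinguished by reduction mod $n-1$), so the criterion as stated simply does not apply, and your ``mild reformulation'' is exactly where the missing mathematics lives. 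The paper replaces Conditions A(2)/B(1) by orbit-wise versions (Conditions A$'$ and B$'$: every classful subgroup acts transitively on each orbit $G\cdot x$, plus an analogous statement for half-intervals $[v,1]$) and must re-prove the criterion (Theorem \ref{thm:conditions_PL}) under these weaker hypotheses; none of this is supplied or even correctly identified in your sketch. Your ``endpoint germ groups are abelian'' observation is not one of the criterion's hypotheses and does not substitute for any of this --- it only feeds the final extension step from the slope-one subgroup to the whole group.

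Second, and more fundamentally, your proposal never engages with the conditions about classful subgroups, which are the heart of the proof. For both $F_n$ and $F_\tau$ one must show that an arbitrary classful subgroup $H$ already acts transitively on the relevant orbits (B$'$(1), B$'$(2), resp.\ B(1)) and realizes all of $G(I_0)$ on some interval (B$'$(3), resp.\ B(2)). The paper does this by constructing class-invariant surjective maps --- $\beta_x\colon F_{n,1,-1}\to X_x$ recording the asymptotic displacement of a fundamental domain, and for $F_\tau$ a map $\beta\colon F_{\tau,1,-1}\to U$ built on the unique representation $p=(a+b\tau)\tau^j$ with $(a,b)\in U$ (a Fibonacci-type lemma) --- so that classfulness of $H$ forces these maps to remain surjective on $H$. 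For $F_\tau$, incidentally, transitivity of the full group on $\ZZ[\tau]\cap(0,1)$ \emph{does} hold (Cleary), so the original criterion applies there and the difficulty is not the unequal subdivision lengths you anticipate but again the classful-subgroup conditions. As written, your argument would at best reprove that $F_n$ and $F_\tau$ are generated by elements supported in subintervals, which is far short of invariable generation.
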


Theorem \ref{thm:appendix_main} (1) stands in contrast to the fact that the Higman--Thompson groups $T_n$ and $V_n$ are not invariably generated for any $n \in \ZZ_{\geq 2}$ (\cite{MR3621672} for $n = 2$, and \cite{2207.04235} for any $n \in \ZZ_{\geq 2}$).
Note also that in (1), the Higman--Thompson group $F_2$ is just Richard Thompson's group $F$, and its invariable generation was shown in \cite{MR3621672} and \cite{MR4172581}.
To show Theorems \ref{mainthm} and \ref{thm:appendix_main}, we follow the line of the proofs of invariable generation given in \cite{MR4172581}.

\section{Condition for invariable generation}\label{sec:cond_for_inv_gen}
Since an element $f$ of $\PP$ preserves $\pm \infty$, the function $f$ has linear germs at $\pm \infty$.
For $f \in \PP$ which is not the identity map, the maximal interval $(-\infty, s]$ (resp. $[t, \infty)$) on which $f$ is linear is called the \emph{end linear zone of $f$ at $-\infty$} (resp. $\infty$).
For the identity map, we define both end linear zones of it to be $\RR$.
Let $\PP_1$ be the subgroup of $\PP$ whose elements have slope $1$ on both end linear zones.
Note that the group $\PP_1$ is a normal subgroup of $\PP$ that contains the commutator subgroup $\PP'$ of $\PP$.

Let $G$ be a subgroup of $\PP_1$ and $X$ a subset of the real line $\RR$.
A closed interval $I$ in $\RR$ is called an \emph{$X$-interval} if the endpoints of $I$ are contained in $X$.
A closed interval $I$ in $\RR$ is called an \emph{$X^*$-interval} if it is an $X$-interval or there exists an element $x \in X$ such that $I$ is of the form $[x, \infty)$ or $(-\infty, x]$.
For a subgroup $H$ of $G$ and an $X^*$-interval $I$, we set
\begin{align*}
  H(I) &:= \{ h|_I \colon h \in H, \Supp(h) \subset I \}, \\
  H|_I &:= \{ h|_I \colon h \in H, h(I) = I \}.
\end{align*}
Here $\Supp(f)$ denotes the support of $f$, that is, the closure of $\{ t \in \RR \mid f(t) \neq t \}$.

The following Conditions are analogous to ones given by Matsuda--Matsumoto (\cite{MR4172581}, see also Definitions \ref{def:MMCondA} and \ref{def:MMCondB}).
\begin{definition}
  Let $G$ be a subgroup of $\PP_1$ and $X$ a subset of $\RR$.
  We say that \emph{$G$ satisfies Condition A' with respect to $X$} if the following hold:
  \begin{enumerate}
    \item[A'$(1)$] $X$ is a $G$-invariant subset of $\RR$ which contains all the breakpoints of every element of $G$.
    \item[A'$(2)$] The orbit $G\cdot x \subset \RR$ is not bounded from above nor below for any $x \in X$.
    \item[A'$(3)$] For every $X^*$-interval $I$, the equality $G|_I = G(I)$ holds.
  \end{enumerate}
\end{definition}

\begin{definition}
  Let $G$ be a subgroup $\PP_1$ and $X$ a subset of $\RR$.
  We say that \emph{$G$ satisfies Condition B' with respect to $X$} if the following hold:
  \begin{enumerate}
    \item[B'$(1)$] For every classful subgroup $H$ of $G$ and every $x \in X$, the action of $H$ on the orbit $G \cdot x$ is transitive.
    \item[B'$(2)$] For every classful subgroup $H$ of $G$, every $X^*$-interval $I$ of the form $[v, \infty)$, and every $x \in I\cap X$, the action of $H(I)$ on $(G \cdot x) \cap (v,\infty)$ is transitive.
    \item[B'$(3)$] For every classful subgroup $H$ of $G$, there exists an $X$-interval $I_0$ such that $H|_{I_0} = G(I_0)$.
  \end{enumerate}
\end{definition}

Conditions A' and B' provides a criterion for the invariable generation of $G$ as follows.
\begin{theorem}\label{thm:conditions}
  Let $G$ be a subgroup of $\PP_1$.
  If there exists $X \subset \RR$ such that $G$ satisfies Conditions A' and B' with respect to $X$, then $G$ is invariably generated.
\end{theorem}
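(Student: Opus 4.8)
The plan is to show the contrapositive: assuming $G$ is \emph{not} invariably generated, produce a contradiction with Conditions A' and B'. Recall that a group fails to be invariably generated precisely when it admits a proper classful subgroup. So I would fix a classful subgroup $H \subsetneq G$ and aim to prove $H = G$, which contradicts properness. The overall strategy is to leverage Condition A' to give $G$ enough structural rigidity so that the transitivity statements in Condition B' force $H$ to contain, piece by piece, every generator of $G$. The argument should mirror the Matsuda--Matsumoto proof for $\PL$, transported into the piecewise projective setting via Theorem~\ref{thm:MMthm_cond_inv_gen}'s analogue.

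First I would use B'(3) to obtain an $X$-interval $I_0$ with $H|_{I_0} = G(I_0)$; this seeds the induction with one interval on which $H$ already realizes all of $G$'s compactly supported behavior. Next I would use the unboundedness of orbits from A'(2) together with the transitivity statements B'(1) and B'(2) to ``spread'' this control: given control over an $X^*$-interval $I$, I would conjugate elements of $H$ by elements whose existence is guaranteed by transitivity on the orbit $G\cdot x$, thereby enlarging the interval over which $H$ matches $G$. The key mechanism is that, because $X$ contains all breakpoints (A'(1)) and orbits are cofinal in both directions (A'(2)), one can exhaust $\RR$ by a nested sequence of $X^*$-intervals of the form $[v,\infty)$ and $(-\infty,v]$, transferring control outward at each stage. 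The equality $G|_I = G(I)$ from A'(3) is what lets me pass freely between the ``setwise stabilizer'' description and the ``compactly supported'' description of elements on each interval, which is essential for gluing.

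The main obstacle I expect is the gluing/exhaustion step: showing that an element $g \in G$, which has finitely many breakpoints all lying in $X$ and linear germs of slope $1$ at $\pm\infty$ (since $g \in \PP_1$), can be reconstructed inside $H$ from its restrictions to the controlled intervals. Because $g$ is supported on a genuinely unbounded region but is trivial-slope at infinity, I would need to write $g$ as a product of elements each supported on one of the exhausting $X^*$-intervals $[v,\infty)$ or $(-\infty, v]$, and then invoke B'(2) to realize the half-line pieces and A'(3)/B'(3) to realize the interior piece. Making this decomposition uniform—and checking that the end linear zones of $g$ are correctly absorbed without introducing spurious breakpoints outside $X$—is the delicate point. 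The transitivity in B'(1) is what guarantees that the conjugating elements needed to align breakpoints actually lie in $H$, so I would have to be careful that every conjugation used stays inside the classful subgroup $H$ rather than escaping to $G$.

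Once the exhaustion shows every $g \in G$ lies in the subgroup generated by $H$'s controlled restrictions, and those restrictions all belong to $H$ itself, I conclude $H = G$, contradicting that $H$ was a proper classful subgroup. Hence no proper classful subgroup exists, and $G$ is invariably generated.
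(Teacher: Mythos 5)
Your skeleton for the compactly supported part of the argument matches the paper's: seed with the $X$-interval $I_0$ from B'(3), use the transitivity statements B'(1)/B'(2) (via a double-transitivity consequence) to move $I_0$ around, and use A'(2) to exhaust $\RR$ by an increasing sequence of $X$-intervals $I_n$ with endpoints in the orbits of the endpoints of $I_0$. This yields $H(I_n)=G(I_n)$ for all $n$, hence $H_c=G_c$, where $G_c$ denotes the elements of $G$ with compact support and $H_c=G_c\cap H$. Up to that point your plan is sound, modulo the intermediate lemmas you gloss over (that $H(I)$ is classful in $G(I)$ for half-infinite and then compact $X^*$-intervals, and that $H|_{I_0}=G(I_0)$ upgrades to $H(I_0)=G(I_0)$).

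The genuine gap is in your final step, where you handle an arbitrary $f\in G$ with noncompact support by decomposing it into factors supported on half-lines $[v,\infty)$ and $(-\infty,v]$ and then ``invoking B'(2) to realize the half-line pieces.'' B'(2) is only a statement about transitivity of $H(I)$ on point orbits $(G\cdot x)\cap(v,\infty)$; neither it nor anything else in Conditions A' and B' gives $H(I)=G(I)$ for a half-infinite interval $I$ (the paper only ever proves that $H(I)$ is \emph{classful} in $G(I)$ there, and B'(3) supplies a compact seed interval only), so you cannot conclude that your half-line factors lie in $H$. The missing idea is a commutator trick that avoids controlling $H$ on half-lines altogether: for $f\in G\subset\PP_1$, classfulness gives $g\in G$ with $f^g\in H$; since every element of $\PP_1$ is a translation on its end linear zones and translations commute, $f^gf^{-1}=[g,f]$ is the identity near $\pm\infty$, i.e.\ compactly supported, hence lies in $G_c=H_c\subset H$. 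Together with $f^g\in H$ this gives $f\in H$. Without this (or some substitute), your exhaustion only proves $H\supset G_c$, not $H=G$.
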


The proof is done in a similar way as in \cite{MR4172581}.
For the convenience of the reader, we provide the proof.

Let $G$ be a subgroup of $\PP_1$ and $X$ a subset of $\RR$.
For $f \in G$, we set $s(f) := \sup\{ t \colon f|_{(-\infty, t]} = \id \}$ and $i(f) = \inf \{ t \colon f|_{[t, \infty)} = \id \}$.
\begin{lemma}\label{lem:classful_inf_int}
  Assume that $G$ satisfies Conditions A'(1), A'(3), and B'(1) with respect to $X$.
  Let $H$ be a classful subgroup of $G$ and $I = [v, \infty)$ an $X^*$-interval.
  Then, $H(I)$ is a classful subgroup of $G(I)$.
\end{lemma}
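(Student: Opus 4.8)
The plan is to verify the definition of classfulness directly: show that $H(I)$ is a subgroup of $G(I)$ and that it meets every $G(I)$-conjugacy class. The subgroup claim is routine — elements of $G$ (resp. $H$) supported in $I=[v,\infty)$ fix $(-\infty,v)$ pointwise, hence fix $v$ and satisfy $g(I)=I$, so restriction to $I$ is a group homomorphism whose images are $G(I)$ and $H(I)$. Thus I fix an arbitrary element $g|_I\in G(I)$, with $g\in G$ and $\Supp(g)\subset I$, and aim to produce an element of $H(I)$ that is $G(I)$-conjugate to $g|_I$.

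The starting point is classfulness of $H$ in $G$: there exist $h_0\in H$ and $k\in G$ with $h_0=kgk^{-1}$. Since conjugation transports supports, $\Supp(h_0)=k(\Supp(g))\subset k([v,\infty))=[k(v),\infty)$, using that $k$ is an increasing homeomorphism fixing $+\infty$ and that $v\in X$ because $I$ is an $X^*$-interval of the form $[v,\infty)$. The displaced endpoint $k(v)$ lies in the orbit $G\cdot v$, so Condition B'(1), applied to the classful subgroup $H$ and the point $v\in X$, supplies $h_1\in H$ with $h_1(v)=k(v)$.

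Next I set $h:=h_1^{-1}h_0h_1\in H$. Because $h_1^{-1}(k(v))=v$, I get $\Supp(h)=h_1^{-1}(\Supp(h_0))\subset h_1^{-1}([k(v),\infty))=[v,\infty)=I$, so $h|_I\in H(I)$. Writing $m:=h_1^{-1}k\in G$, one has $h=mgm^{-1}$, and $m(v)=v$ forces $m(I)=I$. This is where the remaining hypotheses enter: since $m$ preserves $I$, Condition A'(3) gives $m|_I\in G|_I=G(I)$, so $m|_I$ is genuinely realized inside $G(I)$. Finally, as $m(I)=I$ and $g(I)=I$, restriction commutes with conjugation, yielding $h|_I=(m|_I)(g|_I)(m|_I)^{-1}$ in $\Homeo(I)$; this exhibits $h|_I\in H(I)$ as a $G(I)$-conjugate of $g|_I$. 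Since $g|_I$ was arbitrary, $H(I)$ is classful in $G(I)$.

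The step demanding the most care — the real crux — is controlling supports through the two conjugations so that the final element $h$ truly has support inside $I$; here B'(1) is indispensable, as it lets me undo the displacement $v\mapsto k(v)$ produced by the first conjugation while staying inside $H$. A secondary technical point is the passage from $G|_I$ to $G(I)$: the conjugating germ $m|_I$ is a priori only the restriction of an element preserving $I$, and it is precisely A'(3) that guarantees this germ is realized by an element supported in $I$, so that the conjugacy takes place within $G(I)$ rather than in a larger group.
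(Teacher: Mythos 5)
Your proof is correct and takes essentially the same route as the paper's: conjugate into $H$ by classfulness, use Condition B'(1) to correct the displacement of a marked point back into $I$, and use Condition A'(3) to realize the resulting $I$-preserving conjugator inside $G(I)$. The only (harmless) divergence is the choice of anchor: the paper pins down $s(f)$, the left edge of the support of the given element, and invokes A'(1) to place it in $X$, whereas you pin down the endpoint $v$ itself, which lies in $X$ by the definition of an $X^*$-interval and so lets you bypass A'(1) altogether.
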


\begin{proof}
  Let $f$ be an element of $G$ with $\Supp(f) \subset I$.
  It suffices to show that there exists an element $g$ of $G$ with $\Supp(g) \subset I$ such that $f^g \in H$.
  As the case where $f$ is the identity map is clear, we assume that $f$ is not the identity map.
  Then we have $s(f) \geq v$.
  Since $H$ is classful in $G$, there exists $g_1 \in G$ such that $f^{g_1} \in H$.
  Then we have $s(f^{g_1}) = g_1(s(f))$.
  Note that by Condition A'(1), the element $s(f)$ is contained in $X$.
  Since $s(f)$ and $g_1(s(f))$ are contained in the orbit $G\cdot s(f)$, there exists $g_2 \in H$ such that $g_2(g_1(s(f))) = s(f)$ by Condition B'(1).
  Here $f^{g_1}$ and $g_2$ are contained in $H$, and hence so is $f^{g_2g_1} = (f^{g_1})^{g_2}$.
  Moreover, since $g_2g_1$ fixes $s(f)$, we have $g_2g_1|_{[s(f), \infty)} \in G|_{[s(f), \infty)}$.
  By Condition A'(3), there exists $g \in G$ such that
  \[
    g =
    \begin{cases}
      \id & \text{ on } \ (-\infty, s(f)] \\
      g_2g_1 & \text{ on } \  [s(f), \infty).
    \end{cases}
  \]
  By definition, we have $f^g = f^{g_2g_1} \in H$ and the support of $g$ is contained in $[s(f), \infty) \subset I$.
  Hence this $g \in G$ is a desired one.
\end{proof}

\begin{lemma}\label{lem:classful_X_int}
  Assume that $G$ satisfies Conditions A'(1), A'(3), B'(1), and B'(2) with respect to $X$.
  Let $H$ be a classful subgroup of $G$ and $I$ an $X$-interval.
  Then, the group $H(I)$ is a classful subgroup of $G(I)$.
\end{lemma}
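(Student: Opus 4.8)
The plan is to mimic the proof of Lemma~\ref{lem:classful_inf_int}, but now aligning \emph{both} endpoints of the support of a given element, rather than just the left one. By definition it suffices to prove that for every $f \in G$ with $\Supp(f) \subset I$ there is some $g \in G$ with $\Supp(g) \subset I$ and $f^g \in H$; restricting to $I$ then shows that $H(I)$ meets the $G(I)$-conjugacy class of $f|_I$, and classfulness follows. The identity case being trivial, assume $f \neq \id$ and write $c := s(f)$ and $d := i(f)$, so that $a \le c < d \le b$ when $I = [a,b]$, and $\Supp(f) \subset [c,d]$. The points $c$ and $d$ are breakpoints of $f$, hence lie in $X$ by Condition~A'(1).

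First I would use Lemma~\ref{lem:classful_inf_int} to align the left endpoint. Applying it to the $X^*$-interval $J := [c,\infty)$ (legitimate, since only Conditions~A'(1), A'(3), and B'(1) are needed there) and to $f|_J \in G(J)$, I obtain $g' \in G$ with $\Supp(g') \subset J$ such that $\phi := f^{g'} \in H$. Because $g'$ fixes $(-\infty, c]$ pointwise, we get $s(\phi) = g'(c) = c$, while $i(\phi) = g'(d)$ is a breakpoint of $\phi \in G$ lying in $(c,\infty) \cap X$ and in the orbit $G \cdot d$.

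Next I would align the right endpoint using Condition~B'(2). Applied to $H$, the interval $J = [c,\infty)$, and the point $d \in J \cap X$, it gives transitivity of $H(J)$ on $(G \cdot d) \cap (c,\infty)$; since both $d$ and $i(\phi)$ lie in this set, there is $\bar h \in H$ with $\Supp(\bar h) \subset J$ and $\bar h(i(\phi)) = d$. Setting $w := \bar h g'$ and $\psi := f^w = \phi^{\bar h} \in H$, I would check that $\bar h$ fixes $(-\infty, c]$, so that $s(\psi) = \bar h(c) = c$ and $i(\psi) = \bar h(i(\phi)) = d$; thus $w$ fixes both $c$ and $d$ and $\Supp(\psi) \subset [c,d]$.

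Finally I would push the conjugator into $I$. Since $w$ fixes $c$ and $d$, its restriction $w|_{[c,d]}$ lies in $G|_{[c,d]}$, and Condition~A'(3) for the $X$-interval $[c,d]$ yields $g \in G$ with $\Supp(g) \subset [c,d] \subset I$ and $g|_{[c,d]} = w|_{[c,d]}$. As $\Supp(f) \subset [c,d]$ and $g, w$ agree on $[c,d]$ while both preserve it, conjugation gives $f^g = f^w = \psi \in H$ with $\Supp(f^g) \subset I$, as required. The step I expect to be the most delicate is this last one: one must verify carefully that replacing $w$ by the compactly supported $g$ leaves the conjugate $f^w$ unchanged, which rests on the fact that $f$ is supported in $[c,d]$ and that $g$ and $w$ restrict to the \emph{same} homeomorphism of $[c,d]$ fixing its endpoints.
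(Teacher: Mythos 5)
Your proof is correct and follows essentially the same route as the paper's: first conjugate $f$ into $H$ by an element supported on a half-line via Lemma~\ref{lem:classful_inf_int}, then realign $i(f)$ using Condition~B'(2), and finally truncate the conjugator to a compact $X$-interval using Condition~A'(3). The only (immaterial) difference is that you base the half-line and the truncation interval at $s(f)$ rather than at the left endpoint $v$ of $I$, as the paper does.
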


\begin{proof}
  The proof is the same as that of Lemma \ref{lem:classful_inf_int}.

  We set $I = [v,w]$ and $J = [v, \infty)$.
  Let $f$ be an element of $G$ with $\Supp(f) \subset I$.
  It suffices to show that there exists an element $g$ of $G$ with $\Supp(g) \subset I$ such that $f^g$ is contained in $H$.

  Since the case where $f = \id$ is clear, we assume that $f$ is not the identity map.
  Note that, since $f$ is not the identity map, we have $v < i(f) \leq w$.
  By Condition A'(1), we have $i(f) \in J \cap X$.
  Since the group $H(J)$ is classful in $G(J)$ by Lemma \ref{lem:classful_inf_int}, we take an element $g_1 \in G$ with $\Supp(g_1) \subset J$ such that $f^{g_1}$ is contained in $H$.
  Note that $g_1(i(f)) = i(f^{g_1}) \in J\cap X$.
  Moreover, $g_1(i(f))$ is greater than $v$.
  Hence, together with the fact that $i(f)$ and $g_1(i(f))$ lie in the same orbit $G \cdot i(f)$, Condition B'(2) asserts that there exists $g_2 \in H$ with $\Supp(g_2) \subset J$ such that $g_2(g_1(i(f))) = i(f)$.
  Since $f^{g_1}$ and $g_2$ are both in $H$, so is $f^{g_2g_1}$.
  As $g_2g_1$ fixes $v$ and $i(f)$, we have $g_2g_1|_{[v, i(f)]} \in G(I)$.
  By Condition A'(3), there exists $g \in G$ such that
  \[
    g =
    \begin{cases}
      \id & \text{ on } \ (-\infty, v] \cup [i(f), \infty) \\
      g_2g_1 & \text{ on } \  [v, i(f)].
    \end{cases}
  \]
  By definition, we have $f^g = f^{g_2g_1} \in H$ and the support of $g$ is contained in $[v, i(f)] \subset I$.
  Hence this $g \in G$ is a desired one.
\end{proof}

\begin{lemma}\label{lem:equal_X_interval}
  Assume that $G$ satisfies Conditions A'(1), A'(3), B'(1), and B'(2) with respect to $X$.
  Let $H$ be a classful subgroup of $G$ and $I_0$ an $X$-interval satisfying $H|_{I_0} = G(I_0)$. Then, the equality $H(I_0) = G(I_0)$ holds.
\end{lemma}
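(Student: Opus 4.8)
The plan is to prove the two inclusions separately. The inclusion $H(I_0) \subseteq G(I_0)$ is immediate from $H \subseteq G$, so all the content lies in establishing $G(I_0) \subseteq H(I_0)$. Accordingly, I would fix an element $f \in G$ with $\Supp(f) \subset I_0$ and aim to produce some $h \in H$ with $\Supp(h) \subset I_0$ and $h|_{I_0} = f|_{I_0}$, which is exactly the assertion $f|_{I_0} \in H(I_0)$.

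First I would invoke Lemma \ref{lem:classful_X_int}, which under the present hypotheses guarantees that $H(I_0)$ is a classful subgroup of $G(I_0)$. Applying classfulness to the element $f|_{I_0} \in G(I_0)$ yields a conjugator $g \in G(I_0)$ with $(f|_{I_0})^{g} \in H(I_0)$; concretely, $(f|_{I_0})^g = h'|_{I_0}$ for some $h' \in H$ with $\Supp(h') \subset I_0$. The one remaining piece of data is that this conjugator $g$ lies in $G(I_0)$, and this is where I would use the hypothesis $H|_{I_0} = G(I_0)$: since $g \in G(I_0) = H|_{I_0}$, by definition of $H|_{I_0}$ there is an element $\bar h \in H$ with $\bar h(I_0) = I_0$ and $\bar h|_{I_0} = g$. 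The point is that $\bar h$ need not be supported in $I_0$; it only stabilizes $I_0$ setwise, which is precisely what $H|_{I_0}$ records.

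The final step is to undo the conjugation inside $H$. I would form the element $\bar h^{-1} h' \bar h \in H$ and verify two things. Because $\bar h$ preserves $I_0$ and $\Supp(h') \subset I_0$, we have $\Supp(\bar h^{-1} h' \bar h) = \bar h^{-1}(\Supp h') \subseteq \bar h^{-1}(I_0) = I_0$, so $\bar h^{-1} h' \bar h$ is an admissible representative for an element of $H(I_0)$. Restricting to $I_0$ and using $\bar h|_{I_0} = g$ together with $h'|_{I_0} = (f|_{I_0})^g = g\,(f|_{I_0})\,g^{-1}$ gives $(\bar h^{-1} h' \bar h)|_{I_0} = g^{-1}\big(g\,(f|_{I_0})\,g^{-1}\big)g = f|_{I_0}$. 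Hence $f|_{I_0} \in H(I_0)$, which yields the desired inclusion.

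I expect the main obstacle to be bookkeeping rather than anything conceptual: one must check that conjugating $h'$ by the lift $\bar h$, which is only known to preserve $I_0$ and not to be supported in it, keeps the support of the conjugate inside $I_0$ and restricts correctly on $I_0$. Both facts hinge on $\bar h(I_0) = I_0$, and this is exactly why the hypothesis is phrased in terms of $H|_{I_0}$ (setwise-stabilizer restrictions) rather than $H(I_0)$; this is the step that genuinely exploits the assumption $H|_{I_0} = G(I_0)$, upgrading the classfulness supplied by Lemma \ref{lem:classful_X_int} into a literal equality.
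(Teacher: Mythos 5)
Your proof is correct and follows essentially the same route as the paper's: invoke Lemma \ref{lem:classful_X_int} to get classfulness of $H(I_0)$ in $G(I_0)$, use the hypothesis $H|_{I_0}=G(I_0)$ to replace the conjugator by an element of $H$ agreeing with it on $I_0$, and undo the conjugation inside $H$. The only cosmetic difference is that the paper phrases the last step as $f=(f^g)^{h^{-1}}$ using $f^h=f^g$, whereas you conjugate the $H$-element by the lift and track supports explicitly; the underlying computation is identical.
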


\begin{proof}
  We shall show $G(I_0) \subset H(I_0)$.
  Let $f$ be an element of $G$ with $\Supp(f) \subset I_0$.
  It suffices to show that $f$ is contained in $H$.
  Since $H(I_0)$ is classful in $G(I_0)$ by Lemma \ref{lem:classful_X_int}, there exists $g \in G$ with $\Supp(g) \subset I_0$ such that $f^g \in H$.
  As $g|_{I_0} \in G(I_0) = H|_{I_0}$, there exists $h \in H$ such that $h|_{I_0} = g|_{I_0}$.
  Note that the equality $f^h = f^g$ holds since $\Supp(f) \subset I_0$.
  Hence we obtain $f = (f^g)^{h^{-1}}$, which is contained in $H$.
\end{proof}

\begin{lemma}\label{lem:double_trans}
  Assume that $G$ satisfies Conditions A'(1), B'(1), and B'(2) with respect to $X$.
  Let $H$ be a classful subgroup of $G$.
  Let $x_0$ and $y_0$ be elements of $X$ with $x_0 < y_0$ and $x_1$ and $y_1$ elements of $G\cdot x_0$ and $G\cdot y_0$, respectively, with $x_1 < y_1$.
  Then, there exists an element $h$ of $H$ such that $h(x_0) = x_1$ and $h(y_0) = y_1$.
\end{lemma}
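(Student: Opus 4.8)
The plan is to produce the desired $h \in H$ in two stages, each matching one of the transitivity hypotheses we are permitted to invoke. First I would use Condition B'(1) to move $x_0$ to its target $x_1$, and then use Condition B'(2) to correct the image of $y_0$ by an element of $H$ supported to the right of $x_1$, so that the already-achieved condition $h(x_0)=x_1$ is not disturbed.

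For the first stage, since $x_0 \in X$ and $x_1 \in G\cdot x_0$, Condition B'(1) supplies an element $h_1 \in H$ with $h_1(x_0)=x_1$. Set $y_0' := h_1(y_0)$. Because $h_1$ is orientation preserving and $x_0<y_0$, we get $x_1=h_1(x_0)<h_1(y_0)=y_0'$, so $y_0'$ lies strictly to the right of $x_1$. Moreover $y_0'=h_1(y_0)\in G\cdot y_0$, and by Condition A'(1) the orbit $G\cdot y_0$ is contained in the $G$-invariant set $X$; the same holds for $y_1$. Thus after this stage $x_0$ already sits at $x_1$, and it remains only to move $y_0'$ to $y_1$ by an element of $H$ fixing $x_1$.

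For the second stage, consider the interval $I := [x_1,\infty)$, which is a legitimate $X^*$-interval since $x_1\in G\cdot x_0\subset X$. Both $y_0'$ and $y_1$ belong to $(G\cdot y_0)\cap(x_1,\infty)$: each lies in the orbit $G\cdot y_0$, and each strictly exceeds $x_1$ (for $y_1$ this is exactly the hypothesis $x_1<y_1$). Applying Condition B'(2) with $v=x_1$ and base point $y_0'\in I\cap X$, the group $H(I)$ acts transitively on $(G\cdot y_0)\cap(x_1,\infty)$, so there is an element $h_2\in H$ with $\Supp(h_2)\subset I$ whose restriction to $I$ sends $y_0'$ to $y_1$. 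Since $\Supp(h_2)\subset[x_1,\infty)$, the element $h_2$ is the identity on $(-\infty,x_1]$, and in particular $h_2(x_1)=x_1$.

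Finally I would set $h := h_2 h_1\in H$ and verify directly that $h(x_0)=h_2(x_1)=x_1$ and $h(y_0)=h_2(y_0')=y_1$. The only step requiring genuine care — and the only place where the dynamics enter — is the reduction of the second stage to Condition B'(2): one must check that $y_0'$ and $y_1$ lie in a common orbit and both strictly exceed $x_1$, so that a single application of B'(2) to the orbit $G\cdot y_0$ and the interval $[x_1,\infty)$ does the job, and that every element of $H(I)$ automatically fixes $x_1$. Everything else is the routine composition of the two stages.
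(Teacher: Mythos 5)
Your proposal is correct and follows essentially the same route as the paper: apply Condition B'(1) to send $x_0$ to $x_1$, then apply Condition B'(2) on the $X^*$-interval $[x_1,\infty)$ to an element of $H$ supported there, which therefore fixes $x_1$ while moving $h_1(y_0)$ to $y_1$. The points you flag as needing care (that $h_1(y_0)$ and $y_1$ lie in the same orbit and strictly exceed $x_1$, and that $x_1\in X$ by A'(1)) are exactly the checks made in the paper's proof.
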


\begin{proof}
  By Condition B'(1), we take $h_1 \in H$ such that $h_1(x_0) = x_1$.
  By Condition A'(1), the element $x_1$ is contained in $X$.
  Condition B'(2) applied to the $X^*$-interval $[x_1, \infty)$ states that the action of $H([x_1, \infty))$ on $(G\cdot h_1(y_0)) \cap (x_1, \infty)$ is transitive.
  Hence, together with $y_1 \in G \cdot y_0 = G \cdot h_1(y_0)$ and $y_1 \in (x_1, \infty)$, there exists $h_2 \in H$ such that $\Supp(h_2) \subset [x_1, \infty)$ and $h_2(h_1(y_0)) = y_1$.
  Then the element $h_2h_1 \in H$ is a desired one.
\end{proof}

\begin{corollary}\label{cor:HI=GI}
  Assume that $G$ satisfies Conditions A'(1), A'(3), B'(1), and B'(2) with respect to $X$.
  Let $I_0 = [x_0, y_0]$ be an $X$-interval with $H|_{I_0} = G(I_0)$.
  Let $x_1$ and $y_1$ be elements of $G \cdot x_0$ and $G \cdot y_0$, respectively, with $x_1 < y_1$.
  We set $I = [x_1, y_1]$, which is an $X$-interval.
  Then, the equality $H(I) = G(I)$ holds.
\end{corollary}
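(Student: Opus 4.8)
The plan is to transport the hypothesis $H|_{I_0} = G(I_0)$ from $I_0$ to $I$ by conjugating with a single element of $H$ that carries $I_0$ onto $I$. First I would record that $I$ is genuinely an $X$-interval: since $X$ is $G$-invariant by Condition A'(1), we have $x_1 \in G\cdot x_0 \subset X$ and $y_1 \in G\cdot y_0 \subset X$, so both endpoints of $I$ lie in $X$. Next, I would upgrade the given equality $H|_{I_0} = G(I_0)$ to the stronger $H(I_0) = G(I_0)$. This is exactly the conclusion of Lemma \ref{lem:equal_X_interval}, all of whose hypotheses (Conditions A'(1), A'(3), B'(1), B'(2), together with $H|_{I_0} = G(I_0)$) are in force here.

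The key step is to produce an element $h \in H$ with $h(I_0) = I$. Applying Lemma \ref{lem:double_trans} to the points $x_0 < y_0$ in $X$ and to the chosen $x_1 \in G\cdot x_0$, $y_1 \in G\cdot y_0$ with $x_1 < y_1$ yields $h \in H$ satisfying $h(x_0) = x_1$ and $h(y_0) = y_1$. Since $h$ is an increasing homeomorphism of $\RR$, it maps $I_0 = [x_0, y_0]$ homeomorphically onto $I = [x_1, y_1]$.

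Finally I would transport the equality along conjugation by $h$. For any $f \in G$ we have $\Supp(hfh^{-1}) = h(\Supp(f))$, so conjugation by $h$ sends each $f \in G$ with $\Supp(f) \subset I_0$ to an element $hfh^{-1} \in G$ with $\Supp(hfh^{-1}) \subset h(I_0) = I$, with inverse given by conjugation by $h^{-1}$; identifying $G(I_0)$ and $G(I)$ with the sets of such supported elements (each being determined by its restriction, as it is the identity elsewhere), this induces a bijection $G(I_0) \to G(I)$. Because $h \in H$, the very same map restricts to a bijection $H(I_0) \to H(I)$. Combining this with $H(I_0) = G(I_0)$ gives $H(I) = G(I)$, which is the desired conclusion.

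The argument has essentially no obstacle beyond assembling the earlier results; the only point that requires care is that the conjugating homeomorphism $h$ be chosen inside $H$ rather than merely in $G$. This is precisely what Lemma \ref{lem:double_trans} supplies, and it is exactly what makes conjugation by $h$ carry $H(I_0)$ onto $H(I)$ rather than only $G(I_0)$ onto $G(I)$.
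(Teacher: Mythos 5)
Your proposal is correct and follows exactly the paper's own argument: apply Lemma \ref{lem:equal_X_interval} to upgrade $H|_{I_0}=G(I_0)$ to $H(I_0)=G(I_0)$, use Lemma \ref{lem:double_trans} to find $h\in H$ with $h(I_0)=I$, and transport the equality by conjugation. The extra care you take in spelling out why conjugation by $h$ identifies $H(I_0)$ with $H(I)$ and $G(I_0)$ with $G(I)$ is just an expansion of the paper's one-line computation $H(I)=H(I_0)^h=G(I_0)^h=G(I)$.
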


\begin{proof}
  By Lemma \ref{lem:equal_X_interval}, we have $H(I_0) = G(I_0)$.
  By Lemma \ref{lem:double_trans}, there exists $h \in H$ such that $h(I_0) = I$.
  Hence, we obtain $H(I) = H(I_0)^{h} = G(I_0)^h = G(I)$.
\end{proof}

We are now ready to prove Theorem \ref{thm:conditions}.

\begin{proof}[Proof of Theorem $\ref{thm:conditions}$]
  Assume that $G$ satisfies Conditions A' and B' with respect to $X$.
  Let $H$ be a classful subgroup of $G$.
  We show that $G \subset H$.
  We set
  \[
    G_c = \{ g \in G \mid \Supp(g) \text{ is compact} \}
  \]
  and set $H_c = G_c \cap H$.
  By Condition B'(3), we take an $X$-interval $I_0 = [x_0, y_0]$ with $H|_{I_0} = G(I_0)$.
  By Condition A'(2), we take an increasing sequence $\{ I_n = [x_n, y_n] \}_{n \geq 1}$ of $X$-intervals satisfying $x_n \in G\cdot x_0$ and $y_n \in G\cdot y_0$ for every $n \geq 1$ and
  \[
    \bigcup_{n \geq 1} I_n = \RR.
  \]
  Then, we have $G_c = \bigcup_{n} G(I_n)$ and $H_c = \bigcup_{n}H(I_n)$.
  By Corollary \ref{cor:HI=GI}, we have $H(I_n) = G(I_n)$ for every $n \geq 1$, and hence $H_c = G_c$.

  Let $f$ be an element of $G$.
  Since $H$ is classful, there exists $g \in G$ such that $f^g \in H$.
  The fact that the commutator $f^{g} f^{-1} = [g,f]$ is contained in $G_c = H_c \subset H$, together with $f^g \in H$, implies that $f \in H$.
  This completes the proof.
\end{proof}

\section{Proof of Theorem \ref{mainthm} (1) and (4)}
In this section, we show Theorem \ref{mainthm} (1).
We set $G = \PP_1$ and $X = \RR$.
Note that if we replace $X$ with $\QQ$ and $G$ with the group of piecewise $\PSL(2,\ZZ)$ homeomorphisms of the real line, then the following arguments also give the proof of Theorem \ref{mainthm} (4).

It is clear that $G$ satisfies Conditions A'(1) and A'(3) with respect to $X$.
Since the map $f\colon \RR \to \RR$ defined by $f(t) = t+1$ is contained in $G$, Condition A'(2) is also satisfied.

\begin{proposition}\label{prop:B1}
  Let $H$ be a classful subgroup of $G$.
  Then the action of $H$ on $X$ is transitive.
  In particular, $G$ satisfies Condition B'(1) with respect to $X$.
\end{proposition}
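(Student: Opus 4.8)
The plan is to use the flexibility of $G = \PP_1$ together with classfulness to reduce transitivity of $H$ on $X = \RR$ to a single \emph{local} statement near one distinguished point. First I would record the elementary structural facts. Every translation $\tau_c \colon t \mapsto t + c$ is globally affine of slope $1$, hence lies in $\PP_1$; therefore $G$ already acts transitively on $\RR$, so $G \cdot x = \RR$ for all $x$, and the assertion is literally that $H$ is transitive on $\RR$. Next, for $g \in G$ write $c_+(g), c_-(g) \in \RR$ for the (slope-$1$) translation amounts of its germs at $\pm\infty$; a one-line computation shows $c_\pm \colon G \to \RR$ are homomorphisms, hence class functions, so classfulness of $H$ forces $c_\pm(H) = c_\pm(G) = \RR$. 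Finally, no point stabilizer $\mathrm{Stab}_G(p)$ is classful, since it contains no conjugate of the fixed-point-free element $\tau_1$; as $H$ is classful, this gives $H \cdot p \neq \{p\}$ for every $p$, i.e. $H$ moves every point.

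The core step applies classfulness to a well-chosen model. Let $g_0 \in G$ have a single fixed point, attracting, whose germ there is a genuine hyperbolic M\"{o}bius transformation (a copy of $t \mapsto \lambda t$ with $0 < \lambda < 1$ placed at an interior point, flattened to slope $1$ at the ends with $c_+(g_0) < 0 < c_-(g_0)$). Since the fixed point is the unique one and is attracting, and the end germs push points back, its basin of attraction is all of $\RR$. By classfulness $H$ contains a conjugate $a$ of $g_0$; then $a$ has a single attracting fixed point $p$ with basin $\RR$, so $a^n(t) \to p$ for every $t$. As $a \in H$, this shows that \emph{every} $H$-orbit accumulates at $p$.

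I would then complete the proof by proving the single claim that $H \cdot p$ contains an open interval $U$ around $p$. Granting this, $H \cdot p$ is open: for $q = h(p) \in H \cdot p$ the homeomorphism $h$ carries $U$ to an interval around $q$ inside $H \cdot p$. Moreover any other orbit $O$, accumulating at $p$ by the previous paragraph, must meet $U \subset H \cdot p$ and hence coincide with $H \cdot p$; so $H \cdot p = \RR$ and $H$ is transitive. (The same argument, with $G$ the piecewise $\PSL(2,\ZZ)$ group and $X = \QQ$, yields Theorem \ref{mainthm} (4), the relevant orbit being $\QQ$.) As partial progress toward the interval, one already gets a sequence in $H \cdot p$ converging to $p$: choosing $b \in H$ with $b(p) = p_1 \neq p$ (possible since $H$ moves $p$), the elements $a^{-n} b a^{n} \in H$ send $p$ to $a^{-n}(p_1) \to p$.

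The hard part is exactly this passage from accumulation to a genuine interval in the orbit, i.e. local transitivity near $p$. The obstruction is structural: classfulness pins down the dynamical \emph{type} of an element (its multipliers at fixed points and its germs $c_\pm$ at infinity are conjugacy invariants) but never the \emph{position} of a fixed point, so one cannot simply transport a chosen element so as to act at $p$. To manufacture a continuum of points in $H \cdot p$ rather than merely a sequence, I would work inside the centralizer $C_H(a) = C_G(a) \cap H$: because $p$ is the unique fixed point of $a$, every element commuting with $a$ fixes $p$, and the germs at $p$ of such elements form a subgroup of the one-parameter M\"{o}bius group fixing $p$, a subgroup whose action on a one-sided punctured neighborhood of $p$ is transitive as soon as it is all of that one-parameter group. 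The plan is to use the abundance of conjugacy classes furnished by $c_\pm$ and by the hyperbolic multipliers to place enough $p$-fixing elements in $H$ that their common germ action at $p$ sweeps out an interval. I expect this final step—upgrading a family of $p$-fixing M\"{o}bius germs in $H$ from a dense to a genuinely transitive local action, i.e. securing \emph{exactness} rather than mere density—to be the main obstacle, and it is here that the piecewise-projective structure of $\PP_1$, namely the ability to realize prescribed M\"{o}bius germs exactly, must be used in an essential way.
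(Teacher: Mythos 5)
There is a genuine gap, and you have located it yourself: everything hinges on showing that $H\cdot p$ contains an open interval around the attracting fixed point $p$, and this step is not proved. The route you sketch for it is unlikely to close: classfulness tells you which conjugacy classes meet $H$, but it gives no control whatsoever over the centralizer $C_G(a)\cap H$ --- there is no reason $H$ should contain any element commuting with your chosen $a$ beyond the powers of $a$ itself, so you cannot manufacture a family of $p$-fixing germs inside $H$ this way. (A secondary problem: for the piecewise $\PSL(2,\ZZ)$ variant with $X=\QQ$, the fixed point of a hyperbolic germ is a quadratic irrational, so your $p$ would not even lie in the orbit you are trying to prove transitivity on.) Your accumulation argument via $a^{-n}ba^{n}$ only yields a sequence in $H\cdot p$ converging to $p$, which, as you note, is strictly weaker than what is needed.

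The paper avoids interior fixed points entirely and works at infinity, where the slope-$1$ (translation) germs that you correctly identified as class data do all the work. Conjugating $f(t)=t+1$ by any $g\in\PP_1$ yields an element $h_0\in H$ with $h_0(t)>t$ everywhere and $h_0(t)=t+1$ on both end linear zones; hence $[n,n+1]$ is a fundamental domain for $h_0$ for large $n$, and any $x$ can be pushed by a power of $h_0$ to a point $n+\alpha$ with $\alpha\in[0,1)$. A second application of classfulness to the translation $t\mapsto t-\alpha$ produces $h_1\in H$ acting as $t\mapsto t-\alpha$ near $+\infty$, and the composite $h_0^{\,n-m}h_1h_0^{\,l+m-n}$ carries $x$ exactly to the fixed integer $n$. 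This kills the ``position versus type'' obstruction you worried about: one never needs to prescribe where an element of $H$ acts, only what its germ at infinity is, and that germ is a conjugacy invariant. I would suggest reworking your argument along these lines rather than trying to upgrade density to exactness at an interior fixed point.
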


\begin{proof}
  Let $H$ be a classful subgroup of $G$.
  For the element $f \in G$ defined by $f(t) = t+1$, there exist $g \in G$ such that $f^g \in H$ since $H$ is classful in $G$.
  Since $g$ is contained in $G$, the map $g$ is of the form $g(t) = t + a$ on both end linear zones.
  Hence the conjugate $f^g$ is of the form $f^g(t) = t+1$ on both end linear zones.
  Note that, as $f(t) > t$ for every $t \in \RR$, the map $f^g$ also satisfies $f^g(t) > t$ for every $t \in \RR$.
  We set $h_0 = f^g \in H$.

  Let $x$ be an element of $X$.
  Let $n$ be an integer satisfying $h_0(t) = t+1$ for every $t \in [n, \infty)$.
  Since the interval $[n, n+1]$ is a fundamental domain of the map $h_0$, there exist an integer $l$ and a real number $\alpha \in [0, 1)$ such that $h_0^l(x) = n+\alpha$.
  As with the existence of $h_0$, there exist $h_1 \in H$ and an integer $m$ greater than $n$ such that $h_1$ is of the form $h_1(t) = t - \alpha$ for every $t \in [m, \infty)$.
  Then the function $h = h_0^{n-m}h_1h_0^{l+m-n}$ is contained in $H$ and satisfies
  \[
    h(x) = h_0^{n-m}h_1h_0^{l+m-n}(x) = n.
  \]
  This implies that for every $x \in X$, there exists an integer $n$ such that $\ZZ_{\geq n} \subset H \cdot x$.
  Hence the lemma follows.
\end{proof}

\begin{proposition}
  Let $H$ be a classful subgroup of $G$ and $I = [v,\infty)$ an $X^*$-interval.
  Then the action of $H(I)$ on $(v, \infty)$ is transitive.
  In particular, Condition B'(2) is satisfied.
\end{proposition}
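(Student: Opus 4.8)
The plan is to reduce the statement to the fundamental-domain argument already carried out for Condition B'(1) in Proposition \ref{prop:B1}, but performed \emph{inside} the group $G(I)$ of elements supported on $I = [v,\infty)$. Since $G = \PP_1$ satisfies Conditions A'(1) and A'(3), and, by Proposition \ref{prop:B1}, Condition B'(1) with respect to $X = \RR$, Lemma \ref{lem:classful_inf_int} applies and shows that $H(I)$ is a classful subgroup of $G(I)$. This is the engine of the proof: it lets us realize, inside $H(I)$, elements whose germ near $+\infty$ is prescribed, exactly as the translation $t \mapsto t+1$ was realized inside $H$ in Proposition \ref{prop:B1}.

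First I would manufacture a ``translation towards $+\infty$''. Choose $f \in G(I)$ with $\Supp(f) \subset [v,\infty)$, with $f(t) > t$ for all $t \in (v,\infty)$, and with $f(t) = t+1$ near $+\infty$; such an $f$ clearly exists. Since $H(I)$ is classful in $G(I)$, there is $\bar g \in G(I)$ with $h_0 := f^{\bar g} \in H(I)$. Because $\bar g \in \PP_1$ has slope $1$ near $+\infty$ and fixes $(-\infty,v]$, conjugation leaves the translation amount there unchanged, so $h_0(t) = t+1$ near $+\infty$; moreover $\bar g$ maps $(v,\infty)$ onto itself, whence $h_0(t) > t$ for all $t \in (v,\infty)$. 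Fix an integer $n > v$ with $h_0(t) = t+1$ for all $t \geq n$.

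Next I would show that \emph{every} point of $(v,\infty)$ can be carried to the single point $n$ by an element of $H(I)$. Let $x \in (v,\infty)$. Since $h_0$ pushes every point of $(v,\infty)$ strictly to the right, with $h_0^k(x) \to +\infty$ as $k \to +\infty$ and $h_0^k(x) \to v$ as $k \to -\infty$, and since $h_0$ acts as $t \mapsto t+1$ on $[n,\infty)$, there is an integer $l$ with $h_0^l(x) = n + \beta \in [n, n+1)$. Using classfulness of $H(I)$ in $G(I)$ once more, I would produce $h_1 \in H(I)$ equal to $t \mapsto t-\beta$ near $+\infty$ (starting from any element of $G(I)$ with this germ, again using that conjugation preserves the translation amount at $+\infty$), and fix an integer $m > n$ with $h_1(t) = t - \beta$ for $t \geq m$. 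Then $h := h_0^{n-m} h_1 h_0^{l+m-n} \in H(I)$ satisfies $h(x) = n$: the factor $h_0^{l+m-n}$ sends $x$ to $m + \beta$, then $h_1$ sends $m+\beta$ to $m$, and finally $h_0^{n-m}$ sends $m$ back to $n$.

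Granting this, transitivity is immediate: for $x, y \in (v,\infty)$ we obtain $h_x, h_y \in H(I)$ with $h_x(x) = n = h_y(y)$, so $h_y^{-1}h_x \in H(I)$ maps $x$ to $y$; restricting to $(G\cdot x) \cap (v,\infty)$ then yields Condition B'(2). The step requiring the most care is the construction of $h_0$ and $h_1$: one must check that the supports remain inside $[v,\infty)$ (which is automatic once one works within $G(I)$) and, crucially, that conjugation in $\PP_1$ leaves the germ at $+\infty$ — in particular the translation amount — unchanged. This last point is exactly what makes the fundamental-domain bookkeeping land $x$ precisely on $n$, and it is the same mechanism underlying Proposition \ref{prop:B1}.
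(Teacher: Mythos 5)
Your proof is correct and follows exactly the route the paper takes: invoke Lemma \ref{lem:classful_inf_int} (using A'(1), A'(3), B'(1)) to see that $H(I)$ is classful in $G(I)$, and then rerun the fundamental-domain argument of Proposition \ref{prop:B1} inside $G(I)$. The paper leaves that second step as ``the argument same as in the proof of Proposition \ref{prop:B1}''; you have simply written out the details it omits.
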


\begin{proof}
  We have seen that the group $G$ satisfies Conditions A'(1), A'(3), and B'(1) with respect to $X$.
  Hence, by Lemma \ref{lem:classful_inf_int}, the group $H(I)$ is classful in $G(I)$ for every $X^*$-interval of the form $[v,\infty)$.
  Therefore, the proposition follows from the same argument as in the proof of Proposition \ref{prop:B1}.
\end{proof}

Finally, we show that $G$ satisfies Condition B'(3) with respect to $X$.
Let $G_1$ be the subset of $G$ whose elements $g$ satisfy $g(t) > t$ for every $t \in \RR$ and are of the form $g(t) = t+1$ on both end linear zones.
For every real number $t \in \RR$, we set $J_t = [t, t+1]$.
Let $g$ be an element of $G_1$ and $J_a$ and $J_b$ intervals which lie in the union of the end linear zones of $g$, where $a, b \in \RR$.
Following \cite{MR4172581}, we call intervals $J_a$ and $J_b$ \emph{monitoring intervals for $g$} if there exists a positive integer $N$ such that $g^N(J_a) = J_b$.
For every real number $x \in \RR$, we define a homeomorphism $\phi_{x} \colon J_0 \to J_x$ by $\phi_{x}(t) = t+x$.
If $J_a$ and $J_b$ are monitoring intervals for $g$, the homeomorphism
\[
  f = \phi_b^{-1} g^N \phi_a \colon J_0 \to J_0
\]
is called the \emph{information of $g$ monitored by $J_a$ and $J_b$}.
For every $g \in G_1$, let $\I(g)$ be the set of information of $g$ monitored by some intervals $J_a$ and $J_b$.
Note that, if $J_a$ and $J_b$ are monitoring intervals of $g$ with information $f$ and if $J_a$ (resp. $J_b$) is contained in the end linear zone at $-\infty$ (resp. at $\infty$), then $J_{a-n}$ and $J_{b+m}$ are also monitoring intervals of $g$ with information $f$ for every positive integer $n$ and $m$.

\begin{lemma}\label{lem:any_inf_monitored}
  For every $f \in G(J_0)$, there exists an element $g$ of $G_1$ such that $f \in \I(g)$.
\end{lemma}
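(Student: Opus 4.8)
The plan is to realize $f$ as an information by explicitly grafting a copy of $f$ onto the translation. Since $f \in G(J_0)$, there is an element $h \in G$ with $\Supp(h) \subset J_0$ and $h|_{J_0} = f$; write $\hf := h$, so that $\hf$ agrees with $f$ on $[0,1]$ and is the identity elsewhere. With $\tau(t) = t+1$, I would set
\[
  g := \tau \circ \hf, \qquad \text{equivalently} \qquad g(t) = \hf(t) + 1 .
\]
Concretely $g(t) = t+1$ for $t \le 0$ and for $t \ge 1$, while $g(t) = f(t) + 1$ for $t \in [0,1]$.

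First I would check that $g \in G_1$. As a product of the two elements $\tau, \hf \in G = \PP_1$, the map $g$ lies in $\PP_1$; since $\tau$ has slope $1$ everywhere and $\hf$ has slope $1$ on its end linear zones, $g$ has slope $1$ on both end linear zones. For the inequality $g(t) > t$: on the two outer regions $g(t) = t+1 > t$, and on $[0,1]$ we have $g(t) = f(t) + 1 > t$ because $f(t) \ge 0 > t-1$ for $t < 1$ while $f(1) = 1$. Hence $g \in G_1$, and its lower (resp. upper) end linear zone contains $(-\infty, 0]$ (resp. $[1,\infty)$).

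Next I would exhibit monitoring intervals and compute the information. Take $J_a = J_{-1} = [-1,0]$, which lies in the lower end linear zone. Iterating $g$ gives $g(J_{-1}) = J_0$, then $g(J_0) = [f(0)+1, f(1)+1] = [1,2] = J_1$ (as $f$ fixes the endpoints), and $g(J_k) = J_{k+1}$ for every $k \ge 1$ since $g$ is the translation $\tau$ on $[1,\infty)$. Thus for each $N \ge 2$ one has $g^N(J_{-1}) = J_{N-1}$ with $J_{N-1} = [N-1,N]$ in the upper end linear zone, so $J_{-1}$ and $J_{N-1}$ are monitoring intervals for $g$. Tracking a point, for $t \in [0,1]$ we get $\phi_{-1}(t) = t-1$, then $g(t-1) = t$, $g(t) = f(t)+1$, and each further application of $g$ adds $1$; hence $g^N(t-1) = f(t) + (N-1)$ and $\phi_b^{-1}\bigl(g^N(\phi_a(t))\bigr) = f(t)$. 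Therefore the information of $g$ monitored by $J_{-1}$ and $J_{N-1}$ is exactly $f$, giving $f \in \I(g)$. The identical construction proves the statement for Theorem \ref{mainthm}(4): $\tau$ lies in $\PSL(2,\QQ)$, the interval endpoints are integers (hence rational), and $\hf$ remains in the piecewise $\PSL(2,\QQ)$ group with rational breakpoints.

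The only point demanding care — the closest thing to an obstacle — is ensuring the chosen monitoring intervals genuinely lie in the end linear zones, which is what forces the choice of $J_{-1}$ and $N \ge 2$ (rather than, say, $J_0$), and tracking the forward orbit precisely enough to confirm that the normalizing translations $\phi_a$ and $\phi_b$ cancel the bookkeeping shift $N-1$ and leave $f$ unaltered.
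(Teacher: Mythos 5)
Your construction $g(t)=\hf(t)+1$ is exactly the one used in the paper, so the proposal is correct and takes essentially the same approach. Your choice of monitoring intervals $J_{-1}$ and $J_{N-1}$ is in fact slightly more careful than the paper's stated pair $J_0,J_1$ (since $J_0$ need not lie in an end linear zone of $g$ when $f\neq\id$), and matches the choice made in the paper's later analogue, Lemma \ref{lem:any_inf_monitored_main3}.
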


\begin{proof}
  Let $f$ be an element of $G(J_0)$.
  Then we define an element $g$ of $G_1$ by setting
  \[
    g(x) =
    \begin{cases}
      f(x)+1 & \ x \in J_0 \\
      x+1 & \text{ otherwise. }
    \end{cases}
  \]
  Then the information of $g$ monitored by $J_{-1}$ and $J_1$ is equal to $f$.
\end{proof}

\begin{lemma}\label{lem:inf_class_inv}
  For every $g \in G_1$ and $h \in G$, we have $\I(g^h) = \I(g)$.
\end{lemma}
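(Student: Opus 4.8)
The plan is to establish the inclusion $\I(g) \subseteq \I(g^h)$ for every $g \in G_1$ and $h \in G$; the reverse inclusion will then follow formally. First I would record that $g^h \in G_1$, so that $\I(g^h)$ is even defined and the statement is symmetric in $g$ and $g^h$. Indeed, conjugation by the increasing homeomorphism $h$ preserves the relation $g(t) > t$, and since $h$ is an element of $\PP_1$ it has slope $1$ on both end linear zones; a short computation then shows $g^h(t) = t+1$ on both end linear zones of $g^h$.

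Write $c_-$ and $c_+$ for the translation constants of $h$ on its left and right end linear zones, so that $h(t) = t + c_-$ for $t$ sufficiently negative and $h(t) = t + c_+$ for $t$ sufficiently positive; consequently $h(J_s) = J_{s + c_-}$ once $J_s$ lies deep enough in the left zone and $h(J_s) = J_{s + c_+}$ once $J_s$ lies deep enough in the right zone, with the analogous formulas for $h^{-1}$. Now fix $f \in \I(g)$ monitored by $J_a$ and $J_b$ with $g^N(J_a) = J_b$. Using the deepening remark (replacing $J_a, J_b$ by $J_{a-n}, J_{b+m}$, which monitor the same information), I may assume $a$ is so negative and $b$ so positive that $J_a, J_{a+c_-}$ lie in the left end linear zones of all three maps $g, h, g^h$ and $J_b, J_{b+c_+}$ lie in the corresponding right end linear zones. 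Since $(g^h)^N = h\, g^N h^{-1}$, I would compute $(g^h)^N(J_{a+c_-}) = h\, g^N h^{-1}(J_{a+c_-}) = h\, g^N(J_a) = h(J_b) = J_{b+c_+}$, so $J_{a+c_-}$ and $J_{b+c_+}$ are monitoring intervals for $g^h$ with the same exponent $N$.

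It then remains to verify that the resulting information $\phi_{b+c_+}^{-1}(g^h)^N \phi_{a+c_-}$ equals $f$. This is a telescoping calculation: tracing $t \in J_0$ through $\phi_{a+c_-}$, $h^{-1}$, $g^N$, $h$, $\phi_{b+c_+}^{-1}$, the constants $c_-$ and $c_+$ cancel against the translations built into $h^{\pm 1}$ on the linear zones, and the relation $g^N \phi_a = \phi_b f$ (equivalent to $f = \phi_b^{-1} g^N \phi_a$) delivers exactly $f(t)$. Hence $f \in \I(g^h)$, proving $\I(g) \subseteq \I(g^h)$. Applying this inclusion to the pair $(g^h, h^{-1}) \in G_1 \times G$ and using $(g^h)^{h^{-1}} = g$ yields $\I(g^h) \subseteq \I(g)$, and the two inclusions give the claim.

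The main obstacle I anticipate is organizational rather than conceptual: one must choose $a$ and $b$ deep enough that all four intervals $J_a, J_b, J_{a+c_-}, J_{b+c_+}$ sit simultaneously inside the correct end linear zones of the three maps $g, h, g^h$, so that every application of $h^{\pm 1}$ and of $g$ in the computation is a genuine constant (or unit) translation. The deepening remark is exactly what makes this simultaneous placement possible, and once the intervals are correctly positioned the identity $f' = f$ is a direct cancellation in which no projective geometry enters.
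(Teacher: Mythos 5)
Your proposal is correct and follows essentially the same route as the paper: prove $\I(g)\subseteq\I(g^h)$ by pushing the monitoring intervals deep into the end linear zones of $h$ so that $h$ acts by translation there, observe that $h(J_a)=J_{a+c_-}$ and $h(J_b)=J_{b+c_+}$ monitor $g^h$ with the same information via the identity $\phi_b^{-1}g^N\phi_a=\phi_{h(b)}^{-1}(g^h)^N\phi_{h(a)}$, and then apply the inclusion to the pair $(g^h,h^{-1})$ for the reverse containment. The only difference is cosmetic (explicit translation constants $c_\pm$ and an explicit check that $g^h\in G_1$, which the paper leaves implicit).
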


\begin{proof}
  We show that $\I(g) \subset \I(g^h)$.
  Let $f$ be an element of $\I(g)$.
  We let monitoring intervals $J_a$ and $J_b$ of $g$ with information $f$ be far enough so that $g(J_a)$ (resp. $J_b$) lies in the end linear zone at $-\infty$ (resp. at $\infty$) of $h$.
  Then the interval $h(J_a)$ (resp. $h(J_b)$) lies in the end linear zone at $-\infty$ (resp. at $\infty$) of $g^h$.
  Moreover, these intervals are monitoring intervals of $g^h$ with information $f$.
  Indeed, if $f = \phi_b^{-1}g^N \phi_a$, then we have
  \[
    f = \phi_{b}^{-1} h^{-1} h g^N h^{-1} h \phi_{a} = \phi_{h(b)}^{-1} (g^h)^{N} \phi_{h(a)}.
  \]
  Here the last equality is deduced from the fact that $J_a$ and $J_b$ lie in the linear end zones of $h$.
  This implies that $\I(g) \subset \I(g^h)$.
  By applying this inclusion to $g^h \in G_1$ and $h^{-1} \in G$, we obtain
  \[
    \I(g^h) \subset \I((g^h)^{h^{-1}}) = \I(g).
  \]
  This completes the proof.
\end{proof}

Lemmas \ref{lem:any_inf_monitored} and \ref{lem:inf_class_inv} imply the following.

\begin{corollary}\label{cor:any_info_classful}
  Let $H$ be a classful subgroup of $G$.
  For every $f \in G(J_0)$, there exists an element $h$ of $H \cap G_1$ such that $f \in \I(h)$.
\end{corollary}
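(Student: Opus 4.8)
The plan is to deduce this corollary directly by combining the two preceding lemmas with the defining property of a classful subgroup, once one observes that the subgroup $G_1$ is invariant under conjugation by elements of $G$. Given $f \in G(J_0)$, Lemma \ref{lem:any_inf_monitored} produces an element $g \in G_1$ with $f \in \I(g)$. Since $H$ is classful in $G$, there is some $k \in G$ with $g^k \in H$. The heart of the argument is then that $g^k$ already lies in $G_1$, so that $g^k \in H \cap G_1$, and that $\I(g^k) = \I(g)$ by Lemma \ref{lem:inf_class_inv}. Setting $h := g^k$ gives $f \in \I(g) = \I(g^k) = \I(h)$, which is exactly the desired conclusion.

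The one point that needs checking is that $G_1$ is closed under conjugation by $G$, and I would verify its two defining conditions separately. The condition $g(t) > t$ for all $t \in \RR$ is preserved under conjugation by any orientation-preserving homeomorphism: writing $s = k^{-1}(t)$, monotonicity of $k$ turns $g(s) > s$ into $k(g(s)) > k(s) = t$, i.e.\ $g^k(t) > t$. For the end-linear-zone condition, recall that $G \subset \PP_1$, so every $k \in G$ is a translation $t \mapsto t + c$ near each of $\pm\infty$; composing the translation $g(t) = t+1$ (valid on the end linear zones of $g$) with these translations shows that $g^k(t) = t+1$ near $\pm\infty$ as well. Hence $g^k \in G_1$.

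I do not expect any serious obstacle here: the substantive content has already been packaged into Lemmas \ref{lem:any_inf_monitored} and \ref{lem:inf_class_inv}, and the corollary is essentially a bookkeeping step that records their consequence in a form ready for verifying Condition B'(3). The only mild subtlety is the verification that conjugation keeps us inside $G_1$ rather than merely inside $G$, which is precisely what lets us assert $g^k \in H \cap G_1$ and thereby invoke the conjugation-invariance of the information set from Lemma \ref{lem:inf_class_inv}.
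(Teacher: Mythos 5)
Your argument is correct and is exactly the one the paper intends: the corollary is stated there as an immediate consequence of Lemmas \ref{lem:any_inf_monitored} and \ref{lem:inf_class_inv} with no written proof, and your chain $f \in \I(g) = \I(g^k)$ with $h := g^k \in H \cap G_1$ is precisely that implicit argument. Your explicit verification that $G_1$ is closed under conjugation by $G$ (using that every element of $G \subset \PP_1$ is a translation on its end linear zones, so the germ $t \mapsto t+1$ is preserved) is a detail the paper leaves unstated but is needed and correct.
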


By Corollary \ref{cor:any_info_classful}, we take $h_0 \in H\cap G_1$ such that $\id \in \I(h_0)$.
Let $J_a$ and $J_b$ be monitoring intervals of $h_0$ with information $\id$.
By definition, there exists a positive integer $N_0$ such that $h_0^{N_0}(J_a) = J_b$ and
\begin{align*}
  \id = \phi_b^{-1} h_0^{N_0} \phi_a.
\end{align*}
Since $h_0 \in G_1$ and $J_a$ and $J_b$ lie in the end linear zones of $h_0$, we have
\begin{align}\label{info_id}
  \id = \phi_{h_0^n(b)}^{-1} h_0^{N_0 + 2n} \phi_{h_0^{-n}(a)} = \phi_{b+n}^{-1} h_0^{N_0 + 2n} \phi_{a-n}
\end{align}
for every positive integer $n$.

\begin{proposition}\label{prop:B'(3)_main1}
  For $J_a$ above, we have $H|_{J_a} = G(J_a)$.
  In particular, Condition B'(3) is satisfied.
\end{proposition}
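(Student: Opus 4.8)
The plan is to prove the reverse inclusion only. Since $G$ satisfies Condition A'(3) and $J_a$ is an $X$-interval, we have $G(J_a)=G|_{J_a}$, while $H|_{J_a}\subseteq G|_{J_a}$ holds trivially because $H\subseteq G$. Hence it suffices to show $G(J_a)\subseteq H|_{J_a}$: every $f\in G(J_a)$ (an endpoint-fixing homeomorphism of $J_a$) is realized as $h|_{J_a}$ for some $h\in H$ with $h(J_a)=J_a$. The last clause ``in particular, Condition B'(3) is satisfied'' is then immediate, since $H|_{J_a}=G(J_a)$ is exactly what B'(3) asks for with $I_0=J_a$.

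To build such an $h$, I would first transport $f$ to the standard interval by setting $f_0=\phi_a^{-1}f\phi_a\in G(J_0)$, so that $f(a+s)=a+f_0(s)$ for $s\in[0,1]$. By Corollary \ref{cor:any_info_classful}, there exists $h_1\in H\cap G_1$ with $f_0\in\I(h_1)$; let $J_c,J_d$ be monitoring intervals and $N_1$ the exponent with $h_1^{N_1}(c+s)=d+f_0(s)$. The element $h_1^{N_1}$ thus realizes the ``twist'' $f_0$ as a map $J_c\to J_d$. The key remaining ingredient is a pair of \emph{rigid transports} in $H$, namely elements $r,q\in H$ with $r(a+s)=c+s$ and $q(d+t)=a+t$ (that is, maps carrying one unit interval onto another by a pure translation, with trivial information). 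Granting these, I set
\[
  h=q\,h_1^{N_1}\,r\in H,
\]
and trace $J_a$ through the composite: $a+s\mapsto c+s\mapsto d+f_0(s)\mapsto a+f_0(s)=f(a+s)$. This shows simultaneously that $h(J_a)=J_a$ and $h|_{J_a}=f$, so $f\in H|_{J_a}$, completing the argument.

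The heart of the proof, and the step I expect to be the main obstacle, is the construction of the rigid transports $r$ and $q$ inside $H$. This is precisely the purpose of the trivial-information element $h_0$: by its defining property and the pushed-out identities \eqref{info_id}, the power $h_0^{N_0}$ is itself a rigid transport $J_a\to J_b$ lying in $H$, and conjugating it by elements of $H$ relocates this rigid transport to other pairs of intervals, the information being conjugation-invariant by Lemma \ref{lem:inf_class_inv}. To reach the monitoring intervals $J_c,J_d$ of $h_1$ from the \emph{fixed} interval $J_a$, one composes $h_0^{\pm N_0}$ with elements of $H$ acting as translations by prescribed real amounts on an end linear zone; such elements are produced exactly as in the proof of Proposition \ref{prop:B1}, where the fractional parts were corrected by an element of the form $t\mapsto t-\alpha$ on a right end zone. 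The delicate points are (i) verifying that the assembled transports are genuinely rigid, so that no spurious twist is introduced that would corrupt the final equality $h|_{J_a}=f$, and (ii) bridging possibly non-commensurable positions of $J_a$ and $J_c,J_d$: since $h_0\in G_1$ only shifts by integers on its end zones, the arbitrary-real-translation elements of $H$ are essential for closing the non-integer gaps. Once the rigidity and the alignment are checked, the displayed computation is routine and the proposition follows.
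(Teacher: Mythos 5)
Your proposal is correct and follows essentially the same route as the paper: reduce to $G(J_a)\subseteq H|_{J_a}$, transport $f$ to $G(J_0)$, realize its information by some $h_1^{N_1}\in H$ via Corollary \ref{cor:any_info_classful}, and sandwich this between rigid transports built from the trivial-information element $h_0$ and a classful element with prescribed translations on its end linear zones. The paper simply packages your two transports $r$ and $q$ into the single explicit word $h_0^{-N_0-n}h_2^{-1}h_1^{N_1+2n}h_2h_0^{-n}$, where one element $h_2\in H$ translates by $c-a$ at $-\infty$ and by $d-b$ at $+\infty$ simultaneously, and the powers of $h_0$ and $h_1$ close the non-integer gaps exactly as you describe.
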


\begin{proof}
  By Condition A'(3), we have $H|_{J_a} \subset G|_{J_a} = G(J_a)$.

  We show that $G(J_a) \subset H|_{J_a}$.
  Let $\hf$ be an element of $G(J_a)$, and set
  \[
    f = \phi_a^{-1} \hf \phi_a \in G(J_0).
  \]
  By Corollary \ref{cor:any_info_classful}, we take $h_1 \in H \cap G_1$ satisfying $f \in \I(h_1)$.
  Let $J_c$ and $J_d$ be monitoring intervals of $h_1$ with information $f$.
  By definition, there exists a positive integer $N_1$ such that $h_1^{N_1}(J_c) = J_d$ and
  \begin{align*}
    f = \phi_d^{-1} h_1^{N_1} \phi_c.
  \end{align*}
  As with (\ref{info_id}), we have
  \begin{align}\label{info_f}
    f = \phi_{h_1^n(d)}^{-1} h_1^{N_1} \phi_{h_1^{-n}(c)} = \phi_{d+n}^{-1} h_1^{N_1 + 2n} \phi_{c-n}
  \end{align}
  for every positive integer $n$.
  Then, by (\ref{info_id}) and (\ref{info_f}), we obtain
  \begin{align}\label{f=idf}
    f = \phi_{a-n}^{-1} h_0^{-N_0 - 2n} \phi_{b+n}  \phi_{d+n}^{-1} h_1^{N_1 + 2n} \phi_{c-n}.
  \end{align}
  Since $H$ is classful in $G$, there exists $h_2 \in H$ such that $h_2(x) = x - a + c$ (resp. $h_2(x) = x - b + d$) on the end linear zone at $-\infty$ (resp. at $\infty$) of $h_2$.
  We take $n$ large enough so that $J_{a-n}$ and $J_{b+n}$ lie in the end linear zones of $h_2$.
  Then we have $h_2 \phi_{a-n} = \phi_{c-n}$ and $h_2 \phi_{b+n} = \phi_{d+n}$.
  Hence, together with (\ref{f=idf}) and the equality $h_0^{n} \phi_{a-n} = \phi_a$, we obtain
  \begin{align*}
    f = \phi_a^{-1} h_0^{-N_0 - n} h_2^{-1} h_1^{N_1 + 2n} h_2 h_0^{-n} \phi_a.
  \end{align*}
  This implies that $\hf = (h_0^{-N_0 - n} h_2^{-1} h_1^{N_1 + 2n} h_2 h_0^{-n})|_{J_a}$.
  Hence $\hf$ is contained in $H|_{J_a}$.
\end{proof}

\begin{proof}[Proof of Theorem \textup{\ref{mainthm} (1)}]
  We have seen that the group $G$ satisfies Conditions A' and B' with respect to $X$.
  Hence, by Theorem \ref{thm:conditions}, the group $G$ is invariably generated.

  Let us consider the group extension
  \[
    1 \to G \to \PP \to \PP/G \to 1.
  \]
  Since $G$ contains the commutator subgroup of $\PP$, the quotient group $\PP/G$ is abelian.
  In particular, $\PP/G$ is invariably generated.
  Since invariable generation is closed under taking extension (\cite{MR417300}), the group $\PP$ is also invariably generated.
\end{proof}

\section{Proof of Theorem \ref{mainthm} (2) and (3)}

In this section, we show Theorem \ref{mainthm} (3). The proof of Theorem \ref{mainthm} (2) is similar.
Let $G$ be Monod's group $H(\ZZ)$ and set $X = P_{\ZZ}$.
By Theorem \ref{thm:conditions}, it suffices to show that the group $G$ satisfies Conditions A' and B' with respect to $X$.

The following proposition is clear.
\begin{proposition}\label{prop:A_main3}
  The group $G$ satisfies Condition A' with respect to $X$.
\end{proposition}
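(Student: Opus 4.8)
The plan is to check the three parts of Condition A$'$ one at a time, each reducing to an elementary property of $H(\ZZ)$ and of the set $P_{\ZZ}$ of fixed points of hyperbolic elements of $\PSL(2,\ZZ)$. Here I regard $G = H(\ZZ)$ as a subgroup of $\PP_1$: any element of $\PSL(2,\ZZ)$ fixing $\infty$ is an integer translation, so every element of $H(\ZZ)$ has slope $1$ on its end linear zones.

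For A$'$(1), the fact that $X = P_{\ZZ}$ contains all breakpoints of every element of $G$ is immediate from the definition of $H(\ZZ)$, so the substantive point is $G$-invariance of $X$. I would argue as follows: let $x \in P_{\ZZ}$, say $Ax = x$ for a hyperbolic $A \in \PSL(2,\ZZ)$, and let $g \in G$. On a one-sided neighborhood of $x$ the map $g$ agrees with the M\"{o}bius transformation of a single $B \in \PSL(2,\ZZ)$, so $g(x) = Bx$; then $BAB^{-1} \in \PSL(2,\ZZ)$ is again hyperbolic and fixes $Bx = g(x)$, whence $g(x) \in P_{\ZZ}$. For A$'$(2), I would simply note that the integer translation $t \mapsto t+1$, coming from $\bigl(\begin{smallmatrix} 1 & 1 \\ 0 & 1 \end{smallmatrix}\bigr) \in \PSL(2,\ZZ)$, lies in $G$, so that $\{ x + n \colon n \in \ZZ \} \subset G \cdot x$ and the orbit is unbounded above and below for every $x$.

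The only part requiring a little care is A$'$(3). The inclusion $G(I) \subset G|_I$ is clear, since a map supported in $I$ fixes $\partial I$ and hence preserves $I$. For the reverse inclusion, given $h \in G$ with $h(I) = I$, I would define $h'$ to equal $h$ on $I$ and $\id$ on the complement; then $\Supp(h') \subset I$ and $h'|_I = h|_I$, so it only remains to check that $h' \in G$. Continuity holds because $h$ fixes the endpoints of $I$, the map $h'$ is piecewise $\PSL(2,\ZZ)$ with finitely many pieces, and the only breakpoints introduced beyond those of $h$ are the finite endpoints of $I$, which lie in $X = P_{\ZZ}$ precisely because $I$ is an $X^*$-interval. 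Thus $h' \in H(\ZZ)$ and $h|_I \in G(I)$. I expect this cut-off argument to be the main (though still mild) obstacle: it is exactly the step where the choice $X = P_{\ZZ}$ is used, since keeping the truncated map inside $H(\ZZ)$ relies on the endpoints of the interval being admissible breakpoints.
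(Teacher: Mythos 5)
Your proof is correct, and it supplies exactly the routine verifications that the paper omits: the paper simply declares this proposition clear, and your three checks (conjugating a hyperbolic element to get $G$-invariance of $P_{\ZZ}$, using the integer translation for unboundedness of orbits, and the cut-off argument for $G|_I = G(I)$) are the intended reasoning. No discrepancy to report.
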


Let $G_1$ be the subset of $G$ whose elements $g$ satisfy $g(t) > t$ for every $t \in \RR$ and are of the form $g(t) = t+1$ on their end linear zones.
Let $x$ be an element of $X$ and set $X_x = (G \cdot x) \cap [0,1)$.

We define a map $\beta_{x} \colon G_1 \to X_x$ as follows.
For $g \in G_1$, we take an integer $n$ large enough so that the interval $[x-n, x-n+1]$ lies in the end linear zone of $g$ at $-\infty$.
Then it is a fundamental domain of $g$.
We take an integer $m \in \ZZ$ large enough so that the interval $g^m([x-n, x-n+1])$ lies in the end linear zone of $g$ at $\infty$.
Then there exist a unique $\alpha \in [0,1)$ and a unique integer $l \in \ZZ$ such that $g^m([x-n, x-n+1]) = [\alpha + l, \alpha + l+1]$.
Now we define the map $\beta_{x} \colon G_1 \to X_x$ by $\beta_x(g) = \alpha$.
Note that this $\beta_x$ is independent of the choice of $n$ and $m$.

\begin{lemma}\label{lem:beta_class_inv}
  For every $x \in X$, the map $\beta_x \colon G_1 \to X_x$ is class invariant, that is, the equality $\beta_x(g^f) = \beta_x(g)$ holds for every $f \in G$ and $g \in G_1$.
\end{lemma}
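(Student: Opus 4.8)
The plan is to compute $\beta_x(g^f)$ directly from the definition and read off that its value coincides with $\beta_x(g)$. First I would record the shape of $f$ near the two ends. Since $f$ lies in $G = H(\ZZ)$ and fixes $\infty$, near each end it agrees with a single M\"obius transformation coming from a matrix of $\PSL(2,\ZZ)$ that fixes $\infty$; such a matrix is upper triangular with diagonal entries $\pm 1$, so the associated germ is a translation $t \mapsto t + a$ at $-\infty$ and $t \mapsto t + b$ at $+\infty$ with $a, b \in \ZZ$. The integrality of $a$ and $b$ is exactly what will make the argument work. I would also note at the outset that $g^f$ again lies in $G_1$: conjugation by the orientation-preserving homeomorphism $f$ preserves the condition $g^f(t) > t$, and on the end linear zones $g^f = fgf^{-1}$ is the composite $t \mapsto t+1$, so $\beta_x(g^f)$ is defined.

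Next I would carry out the tracking computation, in the same spirit as Lemma~\ref{lem:inf_class_inv}. Fix a large integer $n$ so that the interval $[x-n, x-n+1]$ lies in the end linear zone at $-\infty$ of both $f$ and $g^f$, and a large integer $m$ so that $(g^f)^m([x-n,x-n+1])$ lies in the end linear zone at $+\infty$ of $g^f$. Using $(g^f)^m = f g^m f^{-1}$, I would follow the fundamental domain through the three maps. Applying $f^{-1}$ (a translation by $-a$ near $-\infty$) sends $[x-n, x-n+1]$ to $[x-(n+a), x-(n+a)+1]$, an interval of the same form with $n$ replaced by the still-large integer $n+a$. By the very definition of $\beta_x(g)$ --- and its stated independence of the auxiliary choices --- for $m$ large the image $g^m([x-(n+a), x-(n+a)+1])$ lies in the end linear zone at $+\infty$ of $g$ and equals $[\alpha + l, \alpha + l + 1]$ with $\alpha = \beta_x(g)$ and $l \in \ZZ$. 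Finally, for $l$ large this interval lies in the end linear zone at $+\infty$ of $f$, where $f$ is the translation by $b$, so applying $f$ yields $[\alpha + (l+b), \alpha + (l+b)+1]$.

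Putting the three steps together gives $(g^f)^m([x-n,x-n+1]) = [\alpha + (l+b), \alpha+(l+b)+1]$, and since $b, l \in \ZZ$ the integer $l + b$ absorbs the lattice shift while the fractional part $\alpha \in [0,1)$ is untouched; hence $\beta_x(g^f) = \alpha = \beta_x(g)$. The only real subtlety --- and the step I would be most careful about --- is the bookkeeping that lets $n$ and $m$ be chosen once and for all so that every interval appearing lands in the correct end zone (of $f$, of $g$, and of $g^f$ simultaneously), together with the appeal to the well-definedness of $\beta_x$ that lets me replace $n$ by $n+a$ as the base point. The conceptual heart, by contrast, is cheap: because we work in the $\PSL(2,\ZZ)$ setting the end translations $a$ and $b$ are \emph{integers}, so conjugation merely shifts the relevant interval by an element of the lattice $\ZZ$ and cannot alter the fractional part that $\beta_x$ records.
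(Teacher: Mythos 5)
Your proof is correct and follows essentially the same route as the paper's: identify the end germs of $f$ as integer translations (by $a$ at $-\infty$ and $b$ at $+\infty$), track a fundamental domain through $(g^f)^m = f g^m f^{-1}$, and observe that the integer shifts $a$ and $b$ cannot change the fractional part $\alpha$ that $\beta_x$ records. The paper's version differs only in bookkeeping (it starts from the pre-shifted interval $[x-n+i, x-n+i+1]$ rather than applying $f^{-1}$ first), and you additionally justify the integrality of the end translations, which the paper merely asserts.
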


\begin{proof}
  Let $x$ be an element of $X$ and $f$ an element of $G$.
  Then there exist integers $i$ and $j$ such that the function $f$ is of the form $f(t) = t+i$ (resp. $f(t) = t+j$) on the end linear zone at $-\infty$ (resp. at $\infty$) of $f$.
  For an element $g \in G_1$, we set $\alpha = \beta_x(g)$ and take integers $l, m, n \in \ZZ$ such that
  \begin{enumerate}[$(1)$]
    \item $[x-n, x-n+1]$ and $[\alpha + l, \alpha + l + 1]$ lie in the end linear zone of $g$ at $-\infty$ and $\infty$, respectively,
    \item $[x-n+i, x-n+i+1]$ and $[\alpha + l+j, \alpha + l+j+1]$ lie in the end linear zone of $g^f$ at $-\infty$ and $\infty$, respectively, and
    \item the equality $g^m([x-n, x-n+1]) = [\alpha + l, \alpha + l+1]$ holds.
  \end{enumerate}
  Then, since the equality
  \[
    (g^f)^m([x-n+i, x-n+i+1]) = [\alpha + l+j, \alpha + l+j+1]
  \]
  holds, we have $\beta_x(g^f) = \alpha$.
\end{proof}

\begin{lemma}\label{lem:surj_main3}
  For every $x \in X$, the map $\beta_x \colon G_1 \to X_x$ is surjective.
\end{lemma}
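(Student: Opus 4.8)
The plan is to exhibit, for each $\alpha \in X_x = (G\cdot x)\cap[0,1)$, an explicit element $g \in G_1$ whose ``return dynamics'' from the linear zone at $-\infty$ to the one at $+\infty$ carry the $\ZZ$-translation class of $x$ to that of $\alpha$, so that $\beta_x(g)=\alpha$ straight from the definition. First I would record two reductions. Since every element of $G = H(\ZZ)$ agrees near $x$ with a single $\PSL(2,\ZZ)$-M\"obius map, the orbit point $\alpha$ can be written $\alpha = A(x)$ for some $A \in \PSL(2,\ZZ)$. Moreover the hyperbolic fixed points of $\PSL(2,\ZZ)$ are exactly the real quadratic irrationals, so $P_{\ZZ}$ is invariant under $A$ and under integer translations; consequently every breakpoint that the construction produces will lie in $X = P_{\ZZ}$ automatically, and unit intervals $[c,c+1]$ with $c$ a quadratic irrational have both endpoints in $P_{\ZZ}$.

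Next I would construct the ``active'' part of $g$. Writing $\sigma$ for the translation $t \mapsto t+1$ (an element of $G$), choose $\psi \in G$ with $\psi(x) = \alpha$ and an integer $r$ so large that $\psi(t) + r > t$ for all $t$; this is possible because $\psi(t)-t$ is bounded below (it tends to the integer end-shifts at $\pm\infty$ and is continuous in between). Then $\sigma^{r}\psi$ is positive and sends $x$ to $\alpha + r \equiv \alpha \pmod 1$. The goal is to replace $\sigma^{r}\psi$ by a positive, compactly supported $f \in G$ whose support has endpoints $c<d$ in $P_{\ZZ}$ and which sends some representative $\hat x \equiv x \pmod 1$ to a point congruent to $\alpha$, with $A$ playing the role of the M\"obius piece realizing the jump and auxiliary $\PSL(2,\ZZ)$-pieces (built from hyperbolic elements fixing $c$ and $d$) tapering $f$ back to the identity at the endpoints.

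Given such an $f$, I would set $g(t) = f(t)+1$ on $[c,d]$ and $g(t) = t+1$ elsewhere. Because $f$ fixes $c$ and $d$ and satisfies $f \ge \id$, the map $g$ is continuous, strictly above the diagonal, equal to $\sigma$ on both end linear zones, and piecewise $\PSL(2,\ZZ)$ with breakpoints in $P_{\ZZ}$; hence $g \in G_1$. To verify $\beta_x(g)=\alpha$ I would push the fundamental domain $[x-n,x-n+1]$ (for $n$ large, lying in the left linear zone) forward by powers of $g$: its left endpoint stays in the class of $x$ during the unit steps through the left zone, is carried by the active passage into the class of $\alpha$, and is then preserved by the unit steps into the right zone. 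Since the image of a fundamental domain under $g$ is again a fundamental domain, and fundamental domains in the right zone are unit intervals, $g^m([x-n,x-n+1])$ equals $[\alpha+l,\alpha+l+1]$ for suitable $l$, so $\beta_x(g)=\alpha$ as required.

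The main obstacle is the middle step: producing the positive, compactly supported $f \in H(\ZZ)$ realizing the prescribed orbit transition while simultaneously respecting all three constraints — support endpoints and all breakpoints in $P_{\ZZ}$, every piece in $\PSL(2,\ZZ)$, and $f \ge \id$. This amounts to a local transitivity property of Monod's group, namely that $H(\ZZ)$ contains elements supported in a single $P_{\ZZ}$-interval that move one prescribed orbit point of $x$ to another (recall that, in contrast to the $X=\RR$ case of Section~3, there are \emph{no} nontrivial elements supported in $[0,1]$ since $0 \notin P_{\ZZ}$). I expect to obtain this from the self-similar, highly transitive action of $H(\ZZ)$ on $P_{\ZZ}$, using $A$ as the core M\"obius piece and hyperbolic generators fixing $c$ and $d$ to perform the tapering; by comparison, the membership $g \in G_1$ and the endpoint-tracking computation of $\beta_x(g)$ are routine.
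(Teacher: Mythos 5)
Your overall architecture matches the paper's: both proofs build $g\in G_1$ equal to $t\mapsto t+1$ outside a compact ``active'' window and arrange that the passage through that window carries the translation class of $x$ to that of the target point $y=\alpha$, after which $\beta_x(g)=\alpha$ follows by tracking a fundamental domain. (The paper makes the endpoint-tracking cleaner by taking the active window to be $[x,y+m]$ and prescribing $g([x,x+1])=[x+1,y+m]$ and $g([x+1,y+m])=[y+m,y+m+1]$, so exactly two applications of $g$ traverse it; your version with $g=f+1$ on $[c,d]$ would need a little extra care to ensure the orbit of $x-n$ does not re-enter the active region with the wrong congruence class, but that is fixable.)

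The genuine gap is the step you yourself flag and then defer: the existence of the connecting piecewise $\PSL(2,\ZZ)$ homeomorphism whose finitely many breakpoints all lie in $P_{\ZZ}$. Appealing to ``the self-similar, highly transitive action of $H(\ZZ)$ on $P_{\ZZ}$'' is essentially circular here, since local transitivity of $H(\ZZ)$ on orbits is what this lemma (and Lemma \ref{lem:gamma_surj}) is being used to establish. Moreover, your tapering scheme still has to justify why the gluing points land in $P_{\ZZ}$: a point where two $\PSL(2,\ZZ)$ pieces $u$ and $A$ are matched is a fixed point of $u^{-1}A$, and this lies in $P_{\ZZ}$ only if $u^{-1}A$ is hyperbolic, i.e.\ only if the graphs of $u$ and $A$ cross \emph{transversely} there (a tangency gives a parabolic element with a rational, hence non-$P_{\ZZ}$, fixed point). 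The paper's proof supplies exactly this: it glues the translation $f_1(t)=t+1$ to the M\"obius map $h$ with $h(x)=y$ at a transversal intersection point of their graphs when one exists; and when the graphs are disjoint or tangent on $[x,x+1]$, it inserts a high power $g_1^k$ of an auxiliary M\"obius transformation with pole at the integer $d\in(x,x+1)$, so that $g_1^k$ plunges to $-\infty$ near $d$ and exceeds $y+m$ at $x+1$, forcing transversal crossings with both $f_1$ and $h$ by the intermediate value theorem. Without some such argument your construction of $f$ is not complete, and the lemma is not proved.
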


\begin{proof}
  Let $y$ be an element of $X_x$.
  We take an integer $m \in \ZZ$ with $x+1 < y+m$.
  We define a map $g \in G_1$ as follows.
  For every $t \in \RR \setminus (x, y+m)$, we set $g(t) = t+1$.
  On the interval $[x, y+m]$, we set the map $g$ to be a piecewise $\PSL(2,\ZZ)$ homeomorphism with $g([x, x+1]) = [x+1, y+m]$ and $g([x+1, y+m]) = [y+m, y+m+1]$ whose breakpoints are finite and lie in $X$.
  We now verify that such a map from $[x, y+m]$ to $[x+1, y+m+1]$ exists.

  Since $y \in G \cdot x$, there exists an element $h$ of $\PSL(2,\ZZ)$ such that $h(x) = y$.
  First we consider how to define the map on $[x,x+1]$.
  If the graphs of $h \colon \RR \to \RR$ and $f_1 \colon \RR \to \RR$ defined by $f_1(t) = t+1$ intersect at $p \in [x, x+1]$ transversely, then we set $g = f_1$ on $[x, p]$ and $g = h$ on $[p, x+1]$.
  Note that the element $p$ is contained in $P_{\ZZ}$ since the graphs of $h$ and $f_1$ intersect transversely.

  Assume that the graphs do not intersect on $[x, x+1]$ or are tangent to each other at a point $z$ in $[x, x+1]$.
  Note that, in the latter case, $z$ is a rational point.
  Indeed, since $z$ is a tangent point of $h$ and $f_1$, it is a fixed point of a parabolic element of $\PSL(2,\ZZ)$.
  In particular, $z$ is not contained in $P_{\ZZ}$.
  Let $d$ be the unique integer contained in $(x, x+1)$.
  (Note that $x$ is not an integer since $x \in X = P_{\ZZ}$.)
  Take an integer $a$ with
  \[
    a > \frac{(y+m)(x+1-d)+1}{x+1-d}.
  \]
  We set $g_1 \colon \RR \cup \{ \infty \} \to \RR \cup \{ \infty \}$ as the M\"{o}bius transformation of
  \[
    \begin{pmatrix}
      a & -1-ad \\
      1 & -d
    \end{pmatrix}_{\textstyle .}
  \]
  Then we have $\displaystyle \lim_{t \to d + 0} g_1(t) = -\infty$ and $g_1(x+1) > y+m$.
  Hence, by the intermediate value theorem, the graphs of $f_1$ and $g_1$ (resp. of $g_1$ and $h$) intersect transversely at $p \in (d, x+1)$ (resp. at $q \in (d, x+1)$).
  By the transversality, the points $p$ and $q$ are contained in $X$.
  Since $z \notin X$, we have $p \neq z$ and $q \neq z$.
  Moreover, since $f_1(t) < h(t)$ for every $t \in [x, x+1]\setminus \{ z \}$, we have $p < q$.
  We set $g = f_1$ on $[x, p]$, $g = g_1$ on $[p, q]$, and $g = h$ on $[q, x+1]$.

  In the same vain, we can define a piecewise $\PSL(2,\ZZ)$ homeomorphism from $[x+1, y+m]$ to $[y+m, y+m+1]$ whose breakpoints are finite and lie in $P_{\ZZ}$.
\end{proof}

Lemmas \ref{lem:beta_class_inv} and \ref{lem:surj_main3} imply the following.

\begin{corollary}\label{cor:surj_classful_main3}
  Let $H$ be a classful subgroup of $G$.
  Then the map $\beta_x$ restricted to $H\cap G_1$ is also surjective onto $X_x$.
\end{corollary}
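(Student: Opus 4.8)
The plan is to deduce surjectivity of the restriction $\beta_x|_{H \cap G_1}$ from the surjectivity of $\beta_x$ itself (Lemma \ref{lem:surj_main3}) together with its class-invariance (Lemma \ref{lem:beta_class_inv}), using the classfulness of $H$ as the bridge between the two. So I would fix an arbitrary target $y \in X_x$ and aim to produce an element of $H \cap G_1$ on which $\beta_x$ takes the value $y$. First, by Lemma \ref{lem:surj_main3} I pick some $g \in G_1$ with $\beta_x(g) = y$. Since $H$ is classful, it meets the $G$-conjugacy class of $g$, so there is an $f \in G$ with $g^f \in H$.

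The crux is to verify that this conjugate again lies in $G_1$, so that in fact $g^f \in H \cap G_1$. I would check the two defining properties of $G_1$ separately. The positivity $g^f(t) > t$ for all $t$ is automatic: since $f$ is an increasing homeomorphism of $\RR$, the inequality $g(s) > s$ propagates as $g^f(t) = f(g(f^{-1}(t))) > f(f^{-1}(t)) = t$. The second property, that $g^f$ is of the form $t \mapsto t+1$ on its end linear zones, is where the arithmetic of $G = H(\ZZ)$ enters: every element of $\PSL(2,\ZZ)$ fixing $\infty$ is an integer translation, so $f$ equals $t \mapsto t+i$ (resp. $t \mapsto t+j$) on its end linear zone at $-\infty$ (resp. $\infty$), exactly as recorded in the proof of Lemma \ref{lem:beta_class_inv}. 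Conjugating the germ $t \mapsto t+1$ by an integer translation returns $t \mapsto t+1$, so far enough toward either end $g^f$ is again translation by $1$. Hence $g^f \in G_1$; equivalently, $G_1$ is normal in $G$.

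With $g^f \in H \cap G_1$ established, class-invariance (Lemma \ref{lem:beta_class_inv}) gives $\beta_x(g^f) = \beta_x(g) = y$, so $y$ lies in the image of $\beta_x|_{H \cap G_1}$. Since $y \in X_x$ was arbitrary, the restriction is surjective, which is the assertion of the corollary.

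The step I expect to be the genuine content, rather than bookkeeping, is the normality check $g^f \in G_1$. The positivity condition follows purely from monotonicity of $f$, but confirming that the conjugate retains the \emph{unit} translation germ at $\pm\infty$ (not merely slope $1$, which would hold for any subgroup of $\PP_1$) relies on the fact that elements of $\PSL(2,\ZZ)$ fixing $\infty$ act by \emph{integer} translations. This is precisely what makes the "$+1$" normalization defining $G_1$ invariant under conjugation within $H(\ZZ)$.
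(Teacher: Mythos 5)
Your argument is correct and is exactly the deduction the paper leaves implicit when it states that Lemmas \ref{lem:beta_class_inv} and \ref{lem:surj_main3} imply the corollary: pick a $\beta_x$-preimage in $G_1$, conjugate it into $H$ by classfulness, observe that $G_1$ is normal in $G$ (your check via the integer-translation germs at $\pm\infty$ is the right one), and apply class-invariance. Nothing is missing.
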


We now show Condition B'(1).
\begin{proposition}\label{prop:B(1)_main3}
  The group $G$ satisfies Condition B'(1) with respect to $X$.
\end{proposition}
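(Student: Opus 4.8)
The plan is to show directly that $H\cdot x = G\cdot x$, following the template of Proposition~\ref{prop:B1} but replacing the "kill the fractional part'' step by the surjectivity of $\beta$. First I would produce the translation-like engine: since $f_{1}(t)=t+1$ lies in $G_{1}$ and $H$ is classful, there is $g\in G$ with $h_{0}:=f_{1}^{g}\in H$; because every element of $G$ is an integer translation on each end linear zone, conjugation preserves this, so $h_{0}\in H\cap G_{1}$, $h_{0}(t)=t+1$ on both end zones, and $h_{0}>\id$ everywhere. The orbit $G\cdot x$ is invariant under $t\mapsto t+1$ (as $f_{1}\in G$), so $G\cdot x = X_{x}+\ZZ$, i.e. it splits into the ``columns'' $\alpha+\ZZ$ with $\alpha\in X_{x}$. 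Note that $h_{0}$ pushes every point monotonically to $+\infty$ under forward iteration and to $-\infty$ under backward iteration, and acts as the unit translation once inside an end zone.

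The construction then runs as follows. Flow $x$ backward to obtain $w_{0}:=h_{0}^{-s}(x)$ in the left end zone for $s$ large; set $\rho:=\{w_{0}\}\in X_{x}$. Since $h_{0}$ is $+1$ on the left zone, the entire left tail $\{\,w_{0}-k : k\ge 0\,\}\subset H\cdot x$. Now fix any target class $\alpha\in X_{x}$. Because $\rho\in G\cdot x$ we have $G\cdot\rho=G\cdot x$ and $X_{\rho}=X_{x}$, so Corollary~\ref{cor:surj_classful_main3} applied at the base point $\rho$ yields $h\in H\cap G_{1}$ with $\beta_{\rho}(h)=\alpha$. Unwinding the definition of $\beta_{\rho}$ with the fundamental domain chosen far enough to the left that its left endpoint $\rho-n$ lies in the already-reached left tail gives $h^{m}(\rho-n)=\alpha+l$, with $\rho-n\in H\cdot x$ and $\alpha+l$ in the right end zone; hence $\alpha+l\in H\cdot x$. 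Sliding by $h_{0}^{\pm1}$ inside the right zone (where it is $+1$ and so preserves fractional parts) shows that the whole right-zone part of column $\alpha$ lies in $H\cdot x$, and this holds for every $\alpha\in X_{x}$. Finally, for arbitrary $y\in G\cdot x$ I would flow forward to a point $h_{0}^{w}(y)$ in the right end zone; it lies in the right-zone part of its own column, hence in $H\cdot x$, so $y=h_{0}^{-w}(h_{0}^{w}(y))\in H\cdot x$. This gives transitivity, which is exactly Condition B'(1).

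The hard part, and the reason for the base-point shift from $x$ to $\rho$, is that $x$ is a ``middle'' point while $h_{0}$ preserves fractional parts only on the end zones, not across the middle. Consequently, flowing $x$ into the left zone generically lands in a column $\rho=\{w_{0}\}$ different from $\{x\}$ (indeed one computes $\rho=\{g^{-1}(x)\}$), so the defining property of $\beta_{x}$, which tracks left-zone points of fractional part $\{x\}$, cannot be fed by the points we can actually reach from $x$. The remedy is precisely to invoke surjectivity at $\rho$ rather than at $x$: since $X_{\rho}=X_{x}$, the reachable left tail consists exactly of the fractional-part-$\rho$ points that $\beta_{\rho}$ consumes. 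I expect the only delicate bookkeeping to be checking this compatibility, namely that the fundamental domain appearing in $\beta_{\rho}$ can be taken inside the left tail already shown to lie in $H\cdot x$ (and inside the relevant left zones of both $h_{0}$ and $h$), which is routine once the roles of $x$ and $\rho$ are cleanly separated.

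<br>

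This mirrors the $\PP_{1}$ argument of Proposition~\ref{prop:B1}, with one structural difference: there one could find $h_{1}\in H$ equal to $t\mapsto t-\alpha$ on the ends and thereby annihilate the fractional part, whereas in $H(\ZZ)$ the only available translations are by integers, so the fractional class is a genuine invariant and must instead be hit using the surjectivity of $\beta$.
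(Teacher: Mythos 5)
Your proof is correct and rests on exactly the same mechanism as the paper's: take an element of $H\cap G_1$ to flow points into the end linear zones, and use the surjectivity of $\beta$ restricted to $H\cap G_1$ (Corollary \ref{cor:surj_classful_main3}) to bridge between the fractional-part columns of $G\cdot x$. The only difference is organizational: the paper never flows $x$ backward, so the ``middle-crossing'' issue motivating your base-point shift to $\rho$ does not arise --- it instead flows each $y$ forward into the right end zone, landing in some column $\alpha\in X_x$, and then chooses $h_2\in H\cap G_1$ with $\beta_x(h_2)=\alpha$, whose defining property pulls that right tail back onto the common left tail $\{x-n\}$ in the column of $x$ itself.
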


\begin{proof}
  Let $H$ be a classful subgroup of $G$ and $x$ an element of $X$.
  Let $y$ be an element of $G \cdot x$.
  We take $h_1 \in H \cap G_1$ arbitrary and let $k \in \ZZ$ large enough so that $h_1^k(y)$ lies in the end linear zone of $h_1$ at $\infty$.
  We set $h_1^k(y) = \alpha + l$, where $\alpha \in X_x$ and $l \in \ZZ$.
  Then we have $h_1^{k+m}(y) = \alpha + l + m$ for every $m \in \ZZ_{\geq 0}$.
  By Corollary \ref{cor:surj_classful_main3}, we take $h_2 \in H\cap G_1$ with $\beta_x(h_2) = \alpha$.
  Then there exists $n_0 \in \ZZ$ such that
  \[
    \{ x-n \mid n \in \ZZ_{\geq n_0} \} \subset \{ h_2^{-n_2} h_1^{n_1}(y) \mid n_1, n_2 \in \ZZ \}.
  \]
  This implies that $H$ acts on $G \cdot x$ transitively.
  Since $H$ and $x$ are arbitrary, $G$ satisfies Condition B'(1) with respect to $X$.
\end{proof}

To prove Condition B'(2), we modify the definition of the map $\beta_x \colon G_1 \to X_x$ as follows.
First we recall that for every $x \in P_{\ZZ}$, the stabilizer $\PSL(2, \ZZ)_{x}$ is an infinite cyclic group.
To see this, let $x' \in P_{\ZZ}$ be the other fixed point of every element of $\PSL(2,\ZZ)_{x}$.
Then $\PSL(2, \ZZ)_{x}$ acts on the geodesic in the Poincar\'{e} disk connecting $x$ and $x'$.
Hence $\PSL(2,\ZZ)_{x}$ is a subgroup of $\RR$ which is discrete, that is, an infinite cyclic group.
Let $f_x$ be the generator of $\PSL(2, \ZZ)_{x}$ which satisfies $f_x(y) \geq y$ between $x$ and $x'$.

Let $I = [v, \infty)$ an $X^*$-interval.
Then, for every element $g$ of $G(I)$, its restriction to $[v, v + \varepsilon]$ for small positive $\varepsilon$ is equal to $f_v^n$ for some $n \in \ZZ$.
Let $G(I)_1$ be the subset of $G(I)$ whose elements $g$ satisfy $g(t) > t$ for every $t \in (v,\infty)$ and are of the form $g(t) = t + 1$ on the end linear zone at $\infty$ and of the form $g = f_v$ on $[v, v + \varepsilon]$ for small positive $\varepsilon$.
We fix an element $g_0 \in G(I)_1$.
Let $x$ be an element of $I \cap X$ and set $K_n = [g_0^n(x), g_0^{n+1}(x)]$ for every $n \in \ZZ$.
Note that each $K_n$ is a fundamental domain of $g_0|_{I} \colon I \to I$ satisfying $g_0(K_n) = K_{n+1}$.

Let $g$ be an element of $G$ satisfying $\Supp(g) \subset I$ and $g|_{I} \in G(I)_1$.
Let $v_1 \in X$ be the second smallest breakpoint of $g$.
(The smallest breakpoint of $g$ is $v$.)
Let $n$ be an integer satisfying $K_{n} \subset [v, v_1]$.
Then $K_{n}$ is a fundamental domain of $g|_{I}$.
Let $m$ be an integer such that $g^m(K_{n})$ lies in the end linear zone of $g$.
Then, since $g \in G_1$, there uniquely exist $\alpha \in X_x$ and $l \in \ZZ$ such that $g^m(K_{n}) = [\alpha + l, \alpha + l +1]$.
We define $\gamma_x \colon G(I)_1 \to X_x$ by $\gamma_x(g) = \alpha$.
Note that $\gamma_x$ is independent of the choice of $n$ and $m$.

The proof of the following lemma is the same as that of Lemma \ref{lem:beta_class_inv}.
\begin{lemma}\label{lem:gamma_class_inv}
  For every $x \in I \cap X$, the map $\gamma_x$ is class invariant, that is, the equality $\gamma_x(g^f) = \gamma_x(g)$ holds for every $f \in G(I)$ and $g \in G(I)_1$.
\end{lemma}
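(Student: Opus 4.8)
The plan is to follow the proof of Lemma \ref{lem:beta_class_inv} verbatim in structure, replacing the two integer translations that describe $f$ near $\pm\infty$ there by the corresponding germ data for $f \in G(I)$ at the two ends of $I = [v, \infty)$. First I would record the germs of $f$: since $\Supp(f) \subset I$ and $f|_I \in G(I)$, the restriction of $f$ to $[v, v+\varepsilon]$ equals $f_v^p$ for a unique integer $p$, while on the end linear zone at $\infty$ the map $f$ has the form $f(t) = t + j$ for a unique integer $j$. I would also check that $g^f$ again lies in $G(I)_1$, so that $\gamma_x(g^f)$ is defined: near $v$ we have $g^f = f_v^p f_v f_v^{-p} = f_v$, on the end linear zone at $\infty$ we have $g^f(t) = t + j + 1 - j = t + 1$, and $g^f(t) > t$ for $t \in (v,\infty)$ since $f$ is increasing and preserves $I$.

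Next, I would fix $g \in G(I)_1$ with $\gamma_x(g) = \alpha$ and take the monitoring data from the definition: integers $n, m, l$ with $K_n$ a fundamental domain of $g|_I$ contained in $[v, v_1]$ and with $g^m(K_n) = [\alpha + l, \alpha + l + 1]$ lying in the end linear zone of $g$ at $\infty$. The key observation, playing the role of ``$f$ carries $[x-n, x-n+1]$ to $[x-n+i, x-n+i+1]$'' in Lemma \ref{lem:beta_class_inv}, is that $f(K_n) = K_{n+p}$ once $n$ is chosen sufficiently negative. Indeed, $g_0 = f_v$ on $[v, v+\varepsilon_0]$, so for $n \ll 0$ the endpoints $g_0^n(x)$ and $g_0^{n+1}(x)$ are $f_v$-iterates of a point near $v$, and there $f = f_v^p$ acts as the matching power of $f_v$, giving $f(g_0^n(x)) = g_0^{n+p}(x)$ at both endpoints. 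Hence $f(K_n) = K_{n+p}$, and this interval is again a fundamental domain of $g^f|_I$ lying in $[v, v_1']$, where $v_1'$ denotes the second smallest breakpoint of $g^f$.

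Finally I would compute, using $(g^f)^m = f g^m f^{-1}$ and taking $m$ (equivalently $l$) large enough that both $[\alpha + l, \alpha + l + 1]$ and its image $[\alpha + l + j, \alpha + l + j + 1]$ lie in the end linear zones at $\infty$ of $g$ and $g^f$ respectively,
\[
  (g^f)^m(K_{n+p}) = f g^m f^{-1} f (K_n) = f g^m(K_n) = f\bigl([\alpha + l, \alpha + l + 1]\bigr) = [\alpha + l + j, \alpha + l + j + 1].
\]
Since $l + j \in \ZZ$ and $\alpha \in X_x$, this exhibits valid monitoring data for $g^f$ (namely the fundamental domain $K_{n+p}$ and exponent $m$) whose fractional part is $\alpha$, so $\gamma_x(g^f) = \alpha = \gamma_x(g)$.

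The step I expect to be the main obstacle is the bookkeeping behind $f(K_n) = K_{n+p}$: one must justify that near $v$ the $g_0$-orbit of $x$ coincides with the $f_v$-orbit and that $f$ acts there exactly as $f_v^p$, which forces the restriction to sufficiently negative $n$. This is harmless, however, because $\gamma_x$ is independent of the choices of $n$ and $m$, so we are entitled to monitor $g^f$ via the shifted fundamental domain $K_{n+p}$; the remaining verifications (that $g^f \in G(I)_1$ and that both intervals at $\infty$ can be placed in the end linear zones) are routine and mirror the corresponding points in Lemma \ref{lem:beta_class_inv}.
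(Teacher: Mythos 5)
Your proposal is correct and follows exactly the route the paper intends: the paper proves this lemma by declaring it "the same as that of Lemma \ref{lem:beta_class_inv}", and you have carried out that adaptation faithfully, correctly identifying the one genuinely new point (the germ of $f$ at $v$ is $f_v^p$ rather than a translation, so the fundamental domain shifts from $K_n$ to $K_{n+p}$ for $n$ sufficiently negative) and correctly using the independence of $\gamma_x$ from the choices of $n$ and $m$ to absorb the required bookkeeping.
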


\begin{lemma}\label{lem:gamma_surj}
  For every $x \in I \cap X$, the map $\gamma_x \colon G(I)_1 \to X_x$ is surjective.
\end{lemma}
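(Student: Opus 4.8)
The plan is to prove surjectivity of $\gamma_x \colon G(I)_1 \to X_x$ by a direct construction mirroring the proof of Lemma \ref{lem:surj_main3}, but now respecting the constraint that elements of $G(I)_1$ must equal $f_v$ near the endpoint $v$. Given an arbitrary target value $y \in X_x = (G\cdot x)\cap [0,1)$, I would build an explicit element $g$ of $G(I)_1$ whose associated $\gamma_x(g)$ equals $y$. The key difference from the earlier lemma is that $g$ is supported on $I = [v,\infty)$ and is pinned to behave like $f_v$ near $v$, so the construction must take place away from the endpoint, in a compact subinterval of the open ray $(v,\infty)$.

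First I would fix the reference element $g_0 \in G(I)_1$ and choose a fundamental domain $K_n = [g_0^n(x), g_0^{n+1}(x)]$ sitting well inside $(v,\infty)$, far enough from $v$ that I can modify $g_0$ there without disturbing its germ $f_v$ at $v$. On this fundamental domain I want to replace $g_0$ by a piecewise $\PSL(2,\ZZ)$ homeomorphism that pushes the left endpoint $g_0^n(x)$ to a point lying in the same $G$-orbit and congruent to the desired $y+l$ modulo the integer lattice, i.e.\ so that after finitely many further applications of $g_0$ the image of $K_n$ lands on an interval of the form $[\alpha+l, \alpha+l+1]$ with $\alpha = y$. Since $y \in G\cdot x$, there is an element $h \in \PSL(2,\ZZ)$ carrying the relevant orbit point to the right place; the graph-intersection argument from Lemma \ref{lem:surj_main3}, using the auxiliary hyperbolic M\"obius map $g_1$ to create transverse crossings at points guaranteed to lie in $P_{\ZZ}$, supplies a genuine piecewise $\PSL(2,\ZZ)$ homeomorphism with finitely many breakpoints in $X$ realizing this transition. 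I would then extend by $g_0$ outside the modified region so that the resulting $g$ still equals $f_v$ near $v$ and equals a unit translation on the end linear zone at $\infty$, hence lies in $G(I)_1$, and by construction $\gamma_x(g) = y$.

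The main obstacle I anticipate is the same transversality issue handled in Lemma \ref{lem:surj_main3}: when the graph of the candidate Möbius map $h$ fails to cross the reference map transversely on the relevant interval (either missing it entirely or being tangent to it at a parabolic fixed point, which is a rational, non-$P_{\ZZ}$ point), I must insert the auxiliary map $g_1^k$ to force transverse crossings and thereby keep all breakpoints inside $X = P_{\ZZ}$. Verifying that these crossings occur at points of $P_{\ZZ}$ and that the composite remains a bona fide element of $H(\ZZ)$ — finitely many breakpoints, each a fixed point of a hyperbolic element of $\PSL(2,\ZZ)$ — is the delicate part; everything else is bookkeeping on fundamental domains. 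Since this is precisely the mechanism already established in Lemma \ref{lem:surj_main3}, I would largely quote that construction and only indicate the modifications needed to keep the germ at $v$ equal to $f_v$, concluding that $\gamma_x$ is surjective.
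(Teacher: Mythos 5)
Your proposal is correct and follows essentially the same route as the paper: the paper's proof simply sets $x_0 = g_0^n(x)$ for $n$ large enough that $x_0$ lies in the end linear zone of $g_0$ at $\infty$, chooses $m$ with $x_0+1 < y+m$, and then invokes verbatim the graph-intersection construction of Lemma \ref{lem:surj_main3} on $[x_0,y+m]$, exactly as you describe. Your additional care about preserving the germ $f_v$ at $v$ is automatic since the modification takes place far to the right of $v$, so there is no gap.
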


\begin{proof}
  Let $y$ be an element of $X_x$.
  We take an integer $n \in \ZZ$ large enough so that $g_0^n(x)$ is contained in the end linear zone of $g_0$ at $\infty$.
  We set $x_0 = g_0^n(x)$.
  Let $m$ be an integer with $x_0 + 1 < y + m$.
  Then the same argument as in the proof of Lemma \ref{lem:surj_main3} shows that $\gamma_x$ is surjective.
\end{proof}

\begin{corollary}\label{cor:gamma_HI_surj}
  Let $H$ be a classful subgroup of $G$.
  Then, the map $\gamma_x$ restricted to $H(I) \cap G(I)_1$ is surjective onto $X_x$.
\end{corollary}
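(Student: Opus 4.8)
The plan is to deduce Corollary \ref{cor:gamma_HI_surj} from Lemmas \ref{lem:gamma_class_inv} and \ref{lem:gamma_surj} in exactly the way Corollary \ref{cor:surj_classful_main3} is deduced from Lemmas \ref{lem:beta_class_inv} and \ref{lem:surj_main3}. The only structural change is that every conjugation must now be performed \emph{inside} $G(I)$ rather than inside the ambient group $G$, so that supports remain within $I$ and the map $\gamma_x$, which is only defined on $G(I)_1$, stays applicable.

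The first step is to secure a supply of conjugating elements inside $G(I)$. Here I would feed the machinery of Section \ref{sec:cond_for_inv_gen}: by Propositions \ref{prop:A_main3} and \ref{prop:B(1)_main3} the group $G$ satisfies Conditions A'(1), A'(3) and B'(1) with respect to $X$, so Lemma \ref{lem:classful_inf_int} applies to the $X^*$-interval $I = [v, \infty)$ and shows that $H(I)$ is a classful subgroup of $G(I)$. This plays the role that the hypothesis ``$H$ is classful in $G$'' played in the proof of Corollary \ref{cor:surj_classful_main3}, and it is the decisive input.

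Now fix an arbitrary $\alpha \in X_x$. By Lemma \ref{lem:gamma_surj} there is $g \in G(I)_1$ with $\gamma_x(g) = \alpha$. Since $g \in G(I)$ and $H(I)$ is classful in $G(I)$, there is $f \in G(I)$ with $g^f \in H(I)$. Each element of $G(I)$ fixes $v$, is a translation on its end linear zone at $\infty$, and coincides with a power of $f_v$ on $[v, v+\varepsilon]$; using these three facts one checks that conjugation by $f$ preserves the three conditions defining $G(I)_1$. Indeed, $g^f(t) > t$ on $(v,\infty)$ because $f$ is an increasing homeomorphism of $I$ fixing $v$; $g^f(t) = t+1$ near $\infty$ because $g$ and $f$ are translations there; and $g^f = f_v^k f_v f_v^{-k} = f_v$ near $v$, where $f = f_v^k$. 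Hence $g^f \in G(I)_1$, so Lemma \ref{lem:gamma_class_inv} gives $\gamma_x(g^f) = \gamma_x(g) = \alpha$ with $g^f \in H(I) \cap G(I)_1$. As $\alpha$ was arbitrary, $\gamma_x$ restricted to $H(I) \cap G(I)_1$ is surjective onto $X_x$.

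The one point that genuinely requires care, and which I expect to be the main obstacle, is exactly this insistence on conjugating within $G(I)$. Conjugating $g$ by a general element of $G$ would displace its support off $I$ and destroy the normalization $g = f_v$ near $v$, so that $\gamma_x$ could not even be evaluated on the conjugate. Lemma \ref{lem:classful_inf_int} is precisely what permits the conjugation to be carried out inside $G(I)$, after which the stability of the conditions defining $G(I)_1$ under $G(I)$-conjugation is the routine verification indicated above.
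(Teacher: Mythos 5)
Your proof is correct and follows essentially the same route as the paper: Lemma \ref{lem:classful_inf_int} gives that $H(I)$ is classful in $G(I)$, and the conclusion then follows from Lemmas \ref{lem:gamma_class_inv} and \ref{lem:gamma_surj}. The only difference is that you spell out the (correct) routine verification that $G(I)_1$ is stable under conjugation by $G(I)$, which the paper leaves implicit.
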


\begin{proof}
  By Lemma \ref{lem:classful_inf_int}, the group $H(I)$ is classful in $G(I)$.
  Hence, Lemmas \ref{lem:gamma_class_inv} and \ref{lem:gamma_surj} imply the corollary.
\end{proof}

\begin{proposition}\label{prop:B2_main3}
  The group $G$ satisfies Condition B'(2) with respect to $X$.
\end{proposition}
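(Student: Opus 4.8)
The plan is to mirror the proof of Proposition \ref{prop:B(1)_main3}, replacing the map $\beta_x$ and the group $G_1$ by their relative analogues $\gamma_x$ and $G(I)_1$, and using Corollary \ref{cor:gamma_HI_surj} in place of Corollary \ref{cor:surj_classful_main3}. So fix a classful subgroup $H$ of $G$, an $X^*$-interval $I = [v,\infty)$, and $x \in I \cap X$; since the set $(G\cdot x)\cap(v,\infty)$ depends only on the orbit $G\cdot x$, we may assume $x > v$ (replacing $x$ by any point of that set when it is nonempty). Let $g_0 \in G(I)_1$ and $K_n = [g_0^n(x), g_0^{n+1}(x)]$ be as in the definition of $\gamma_x$. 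By Lemma \ref{lem:classful_inf_int} the group $H(I)$ is classful in $G(I)$, so Corollary \ref{cor:gamma_HI_surj} applies: $\gamma_x$ is surjective onto $X_x$ already when restricted to $H(I) \cap G(I)_1$. The goal is reduced, exactly as for Proposition \ref{prop:B(1)_main3}, to showing that the $H(I)$-orbit of an arbitrary $y \in (G\cdot x)\cap(v,\infty)$ contains a family $\{ g_0^{n'}(x) : n' \leq n_0 \}$ of orbit points accumulating at $v$. Since these families are cofinal as $n' \to -\infty$, any two points of $(G\cdot x)\cap(v,\infty)$ can then be joined through a common point $g_0^{n'}(x)$, which yields transitivity.

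To produce this family, I would first choose any $h_1 \in H(I)\cap G(I)_1$ and iterate it: for $k$ large $h_1^k(y)$ lies in the end linear zone of $h_1$ at $\infty$, where $h_1$ is the translation $t \mapsto t+1$. Because $t \mapsto t+1$ lies in $G$, the orbit $G\cdot x$ is invariant under integer translation, so I may write $h_1^k(y) = \alpha + l$ with $\alpha \in X_x$ and $l \in \ZZ$, whence $h_1^{k+m}(y) = \alpha + l + m$ for every $m \geq 0$. Next, by Corollary \ref{cor:gamma_HI_surj} I take $h_2 \in H(I)\cap G(I)_1$ with $\gamma_x(h_2) = \alpha$; unwinding the definition of $\gamma_x$, there are integers $m'$ and $n$ such that $h_2^{m'}$ sends the fundamental domain $K_n$ into the end linear zone of $h_2$ at $\infty$ with $h_2^{m'}(g_0^{n}(x)) = \alpha + l'$ for some $l' \in \ZZ$.

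The combinatorics then run as in Proposition \ref{prop:B(1)_main3}. Near $\infty$ both $h_1$ and $h_2$ are translation by $1$, so the points $\alpha + l + m$ reached from $y$ by powers of $h_1$ and the points $\alpha + l' + i$ reached from $g_0^{n}(x)$ by powers of $h_2$ are cofinal families inside $\alpha + \ZZ$ and hence overlap; tracking a common point down by $h_2^{-1}$ returns $g_0^{n}(x)$, and near $v$ the maps $g_0, h_1, h_2$ all coincide with $f_v$ on $[v, v+\varepsilon]$, so on the points $g_0^{n'}(x)$ with $n'$ sufficiently negative $h_2^{-1}$ acts as $g_0^{-1}$ and merely decreases the exponent. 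This shows $\{ g_0^{n'}(x) : n' \leq n_0 \} \subset \{ h_2^{-n_2} h_1^{n_1}(y) : n_1, n_2 \in \ZZ \}$ for some $n_0$, the required family. The main obstacle is precisely this last zone-matching near the finite endpoint $v$: whereas in Proposition \ref{prop:B(1)_main3} the stabilizing behaviour at $-\infty$ is translation, here it is the action of the hyperbolic generator $f_v$ of $\PSL(2,\ZZ)_v$, so one must verify that the canonical points $g_0^{n'}(x)$ really lie in the region $[v, v+\varepsilon]$ on which $g_0$, $h_1$, and $h_2$ all agree with $f_v$ before $h_2^{-1}$ is applied. As $g_0^{n'}(x) \to v^{+}$ when $n' \to -\infty$, this holds for all sufficiently negative $n'$, and the rest of the argument is identical to that of Proposition \ref{prop:B(1)_main3}.
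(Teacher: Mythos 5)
Your proposal is correct and follows essentially the same route as the paper: both reduce to Lemma \ref{lem:classful_inf_int} and Corollary \ref{cor:gamma_HI_surj}, pick an arbitrary $h_1 \in H(I)\cap G(I)_1$ to push $y$ into the end linear zone at $\infty$, choose $h_2$ with $\gamma_x(h_2)=\alpha$, and deduce the containment $\{ g_0^{-n}(x) : n \geq n_0 \} \subset \{ h_2^{-n_2}h_1^{n_1}(y)\}$. The extra care you take near the endpoint $v$ (where elements of $G(I)_1$ agree with $f_v$) is a verification the paper leaves implicit, not a divergence in method.
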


\begin{proof}
  Let $H$ be a classful subgroup of $G$ and $I = [v, \infty)$ an $X^*$-interval.
  Let $x$ be an element of $I \cap X$.
  By Lemma \ref{lem:classful_inf_int}, the group $H(I)$ is classful in $G(I)$.

  Let $y$ be an element of $(G \cdot x) \cap (v, \infty)$.
  We take $h_1 \in H(I) \cap G(I)_1$ arbitrary and an integer $k$ large enough so that $h_1^k(y)$ is contained in the end linear zone of $h_1$ at $\infty$.
  Then there uniquely exist $\alpha \in X_x$ and $l \in \ZZ$ such that for every $m \in \ZZ$, the equality $h_1^{k+m}(y) = \alpha + l + m$ holds.
  By Corollary \ref{cor:gamma_HI_surj}, we take $h_2 \in H(I) \cap G(I)_1$ with $\gamma_x(h_2) = \alpha$.
  Then there exists $n_0 \in \ZZ$ such that
  \[
    \{ g_0^{-n}(x) \mid n \in \ZZ_{\geq n_0} \} \subset \{ h_2^{-n_2} h_1^{n_1}(y) \mid n_1, n_2 \in \ZZ \}.
  \]
  This implies that $H(I)$ acts on $(G \cdot x) \cap (v, \infty)$ transitively.
\end{proof}

Finally, we show that $G$ satisfies Condition B'(3) with respect to $X$.
The proof is a slight modification of the proof of Proposition \ref{prop:B'(3)_main1}.

Let $x$ be an element of $X$.
For every integer $n \in \ZZ$, we set $J_{x, n} = [x+n, x+n+1]$.
Let $g$ be an element of $G_1$ and $J_{x,a}$ and $J_{x,b}$ intervals which lie in the union of the end linear zones of $g$, where $a, b \in \ZZ$.
Intervals $J_{x,a}$ and $J_{x,b}$ are called \emph{monitoring intervals} for $g$ if there exists a positive integer $N$ such that $g^N(J_{x,a}) = J_{x,b}$.
For every integer $n \in \ZZ$, we define a homeomorphism $\phi_{x,n} \colon J_{x,0} \to J_{x,n}$ by $\phi_{n}(t) = t+n$.
If $J_{x,a}$ and $J_{x,b}$ are monitoring intervals for $g$, the homeomorphism
\[
  f = \phi_{x,b}^{-1} g^N \phi_{x,a} \colon J_{x,0} \to J_{x,0}
\]
is called the \emph{information of $g$ monitored by $J_{x,a}$ and $J_{x,b}$}.
For every $g \in G_1$, let $\I_x(g)$ be the set of information of $g$ monitored by some intervals $J_{x,a}$ and $J_{x,b}$.
Note that, if $J_{x,a}$ and $J_{x,b}$ are monitoring intervals of $g$ with information $f$, then so are $J_{x,a-n}$ and $J_{x,b+m}$ for every positive integer $n$ and $m$.

\begin{lemma}\label{lem:any_inf_monitored_main3}
  For every $f \in G(J_{x,0})$, there exists an element $g$ of $G_1$ such that $f \in \I_x(g)$.
\end{lemma}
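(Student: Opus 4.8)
The plan is to imitate the construction in the proof of Lemma~\ref{lem:any_inf_monitored}, adding the verification needed to land inside $H(\ZZ)$. Given $f \in G(J_{x,0})$, define $g \colon \RR \to \RR$ by
\[
  g(t) =
  \begin{cases}
    f(t) + 1 & t \in J_{x,0}, \\
    t + 1 & \text{otherwise}.
  \end{cases}
\]
Because $f$ is an orientation-preserving homeomorphism of $J_{x,0} = [x, x+1]$ fixing both endpoints, the two pieces agree at $x$ and at $x+1$, so $g$ is an orientation-preserving homeomorphism of $\RR$ with $g(J_{x,0}) = J_{x,1}$ and $g(t) = t+1$ outside a neighborhood of $J_{x,0}$. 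In particular $g(t) > t$ for every $t$ and $g$ is the translation $t \mapsto t+1$ on both end linear zones, which are the conditions defining $G_1$ apart from membership in $G$.

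The only point that does not already appear in the proof of Lemma~\ref{lem:any_inf_monitored} is that $g$ must be a piecewise $\PSL(2,\ZZ)$ homeomorphism with breakpoints in $P_{\ZZ}$, i.e.\ that $g \in G = H(\ZZ)$. Let $T$ denote the translation $T(t) = t+1$, which is the M\"obius transformation of $\bigl(\begin{smallmatrix} 1 & 1 \\ 0 & 1 \end{smallmatrix}\bigr) \in \PSL(2,\ZZ)$; thus $T \in H(\ZZ)$. On $J_{x,0}$ we have $g = T \circ f$, and post-composition with the single element $T$ sends piecewise $\PSL(2,\ZZ)$ maps to piecewise $\PSL(2,\ZZ)$ maps, so $g$ is piecewise $\PSL(2,\ZZ)$ on $J_{x,0}$; off $J_{x,0}$ it equals $T$. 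The breakpoints of $g$ are therefore among the breakpoints of $f$ together with the gluing points $x$ and $x+1$. The breakpoints of $f$ lie in $P_{\ZZ}$ since $f \in G(J_{x,0})$; the point $x$ lies in $X = P_{\ZZ}$ by hypothesis; and $x+1 = T(x)$ lies in $P_{\ZZ}$ because $T \in G$ and $P_{\ZZ}$ is $G$-invariant by Condition~A'(1) (Proposition~\ref{prop:A_main3}). Hence $g \in G_1$.

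It remains to exhibit $f$ as an information of $g$. Since $g(J_{x,0}) = J_{x,1}$, a one-step computation gives $\phi_{x,1}^{-1}\, g\, \phi_{x,0} = f$ on $J_{x,0}$. The definition of monitoring intervals requires them to lie in the end linear zones of $g$, which $J_{x,0}$ does not; however, by the remark following the definition of $\I_x$, the pair $J_{x,-n}$ and $J_{x,1+m}$ are monitoring intervals for all sufficiently large $n, m$, and these do lie in the end linear zones $(-\infty, x]$ and $[x+1, \infty)$. Tracing the $g$-orbit of $J_{x,-n}$ through the active interval $J_{x,0}$ exactly as in the proof of Lemma~\ref{lem:any_inf_monitored} shows that the information monitored by $J_{x,-n}$ and $J_{x,1+m}$ is again $f$; hence $f \in \I_x(g)$.

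I expect the sole obstacle to be the membership bookkeeping in the second paragraph, and in particular confirming $x+1 \in P_{\ZZ}$, which reduces to the $\PSL(2,\ZZ)$-invariance of $P_{\ZZ}$; everything else transcribes the proof of Lemma~\ref{lem:any_inf_monitored} verbatim.
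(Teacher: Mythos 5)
Your proof is correct and follows the same construction as the paper's: the paper likewise defines $g$ by $g(t)=f(t)+1$ on $J_{x,0}$ and $g(t)=t+1$ elsewhere, and reads off $f$ as the information of $g$ monitored by $J_{x,-1}$ and $J_{x,1}$ (the case $n=1$, $m=0$ of your family). The membership bookkeeping you add --- that $g$ is piecewise $\PSL(2,\ZZ)$ with breakpoints in $P_{\ZZ}$, using $x\in P_{\ZZ}$ and the $\PSL(2,\ZZ)$-invariance of $P_{\ZZ}$ for the gluing points --- is left implicit in the paper but is exactly the verification needed.
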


\begin{proof}
  Let $f$ be an element of $G(J_{x,0})$.
  Then we define an element $g$ of $G_1$ by setting
  \[
    g(t) =
    \begin{cases}
      f(t)+1 & \ t \in J_{x,0} \\
      t+1 & \text{ otherwise. }
    \end{cases}
  \]
  Then the information of $g$ monitored by $J_{x,-1}$ and $J_{x,1}$ is equal to $f$.
\end{proof}

\begin{lemma}\label{lem:inf_class_inv_main3}
  For every $g \in G_1$ and $h \in G$, we have $\I_x(g^h) = \I_x(g)$.
\end{lemma}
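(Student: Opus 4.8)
The plan is to prove Lemma~\ref{lem:inf_class_inv_main3} by mimicking the proof of Lemma~\ref{lem:inf_class_inv}, since the situation is essentially identical except that the monitoring intervals are now anchored at translates of the basepoint $x$ rather than at integers. As in the integer case, it suffices by symmetry to show the inclusion $\I_x(g) \subset \I_x(g^h)$; applying this to $g^h \in G_1$ and $h^{-1} \in G$ then yields the reverse inclusion $\I_x(g^h) \subset \I_x((g^h)^{h^{-1}}) = \I_x(g)$, and hence equality.

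First I would take an arbitrary $f \in \I_x(g)$, realized as $f = \phi_{x,b}^{-1} g^N \phi_{x,a}$ for some monitoring intervals $J_{x,a}$ and $J_{x,b}$ of $g$. Using the observation recorded just before the lemma---that if $J_{x,a}$ and $J_{x,b}$ are monitoring intervals with information $f$, then so are $J_{x,a-n}$ and $J_{x,b+m}$ for all positive integers $n,m$---I would push these intervals outward by replacing $a$ with $a-n$ and $b$ with $b+m$ for $n,m$ large. The point of doing so is to arrange that $g(J_{x,a})$ lies in the end linear zone of $h$ at $-\infty$ and that $J_{x,b}$ lies in the end linear zone of $h$ at $\infty$. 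Because $g \in G_1$ acts by the unit translation on its own end linear zones, the recorded displacement of the monitoring intervals leaves the information $f$ unchanged.

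Next I would verify that $h(J_{x,a})$ and $h(J_{x,b})$ are monitoring intervals of $g^h$ with the same information $f$. The key computation is the chain
\begin{align*}
  f = \phi_{x,b}^{-1} g^N \phi_{x,a} = \phi_{x,b}^{-1} h^{-1} h\, g^N h^{-1} h\, \phi_{x,a} = \phi_{h(J_{x,b})}^{-1} (g^h)^N \phi_{h(J_{x,a})},
\end{align*}
where the last equality uses that $h$ restricts to a translation on each of its end linear zones, so that $h \phi_{x,a}$ and $\phi_{x,b}^{-1} h^{-1}$ again become the appropriate affine parametrizations of the translated monitoring intervals. Here I am using the shorthand $\phi_{h(J_{x,a})}$ for the affine parametrization of the interval $h(J_{x,a})$; to stay strictly within the notation introduced in the paper one should note that $h(J_{x,a})$ is itself an interval of the form $J_{x',a'}$ (or a translate thereof) lying in the end linear zone of $g^h$, and carry along the corresponding $\phi$-maps. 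This establishes $f \in \I_x(g^h)$ and completes the inclusion.

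The only genuinely delicate point---the place where this differs from Lemma~\ref{lem:inf_class_inv}---is bookkeeping: since $h$ translates each end linear zone by a possibly nontrivial integer amount, the image $h(J_{x,a})$ need not be of the form $J_{x,a'}$ for an integer $a'$, but rather a unit interval whose left endpoint is $h(x+a)$, which lies in the $G$-orbit of $x$. I expect the main obstacle to be confirming that such translated intervals are legitimately of the type allowed in the definition of $\I_x$, i.e.\ that their endpoints remain in $X = P_{\ZZ}$ and that they genuinely sit inside the end linear zones of $g^h$; once this is checked, the algebraic identity above carries the argument through exactly as before.
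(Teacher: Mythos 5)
Your overall strategy is exactly the paper's: prove the single inclusion $\I_x(g) \subset \I_x(g^h)$, push the monitoring intervals far enough out that $g(J_{x,a})$ and $J_{x,b}$ lie in the end linear zones of $h$, insert $h^{-1}h$ around $g^N$, and deduce the reverse inclusion by applying the result to $g^h$ and $h^{-1}$. The displayed computation is the right one. However, you stop short at the one point that actually needs an argument, and in doing so you assert something that is both unnecessary and internally inconsistent: you say that $h$ translates each end linear zone by an integer amount and then that $h(J_{x,a})$ ``need not be of the form $J_{x,a'}$ for an integer $a'$''; these cannot both hold. The resolution is immediate and is exactly what the paper records. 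Since $h \in G \subset \PP_1$ is a piecewise $\PSL(2,\ZZ)$ homeomorphism with slope $1$ on its end linear zones, near $\pm\infty$ it coincides with an element of $\PSL(2,\ZZ)$ fixing $\infty$, and every such element is an integer translation (an upper triangular integer matrix of determinant $1$ gives $t \mapsto t + b$ with $b \in \ZZ$). Hence $h(t) = t + l$ near $-\infty$ and $h(t) = t + m$ near $+\infty$ with $l, m \in \ZZ$, so $h(J_{x,a}) = J_{x,a+l}$ and $h(J_{x,b}) = J_{x,b+m}$ are intervals of precisely the form admitted in the definition of monitoring intervals, and on the relevant domains $h\,\phi_{x,a} = \phi_{x,a+l}$ and $\phi_{x,b}^{-1}h^{-1} = \phi_{x,b+m}^{-1}$, which turns your identity into
\[
  f = \phi_{x,b+m}^{-1}\,(g^h)^{N}\,\phi_{x,a+l},
\]
i.e.\ $f \in \I_x(g^h)$. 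Your secondary worry about the endpoints remaining in $X = P_{\ZZ}$ is not part of the definition of a monitoring interval (only $a, b \in \ZZ$ is required), and in any case $P_{\ZZ}$ is preserved by integer translations. With this one observation supplied, your argument coincides with the paper's proof.
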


\begin{proof}
  We show that $\I_x(g) \subset \I_x(g^h)$.
  Let $f$ be an element of $\I_x(g)$.
  We let monitoring intervals $J_{x,a}$ and $J_{x,b}$ of $g$ with information $f$ far enough so that $g(J_{x,a})$ (resp. $J_{x,b}$) lies in the end linear zone at $-\infty$ (resp. at $\infty$) of $h$.
  Then the interval $h(J_{x,a})$ (resp. $h(J_{x,b})$) lies in the end linear zone at $-\infty$ (resp. at $\infty$) of $g^h$.
  Moreover, these intervals are monitoring intervals of $g^h$ with information $f$.
  Indeed, let $l$ and $m$ be integers with $h(t) = t + l$ and $h(t) = t + m$ on the end linear zone of $h$ at $-\infty$ and at $\infty$, respectively.
  If $f = \phi_{x,b}^{-1}g^N \phi_{x,a}$, then we have
  \[
    f = \phi_{x,b}^{-1} h^{-1} h g^N h^{-1} h \phi_{x,a} = \phi_{x, b+m}^{-1} (g^h)^{N} \phi_{x, a+l}.
  \]
  Here the last equality is deduced from the fact that $J_{x,a}$ and $J_{x,b}$ lie in the linear end zones of $h$.
  This implies that $\I_x(g) \subset \I_x(g^h)$.
  By applying this inclusion to $g^h \in G_1$ and $h^{-1} \in G$, we obtain
  \[
    \I_x(g^h) \subset \I_x((g^h)^{h^{-1}}) = \I_x(g).
  \]
  This completes the proof.
\end{proof}

\begin{corollary}\label{cor:any_info_classful_main3}
  Let $H$ be a classful subgroup of $G$.
  For every $f \in G(J_{x,0})$, there exists an element $h$ of $H \cap G_1$ such that $f \in \I_x(h)$.
\end{corollary}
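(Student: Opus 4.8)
The plan is to obtain this corollary in exactly the same way its Section 3 counterpart, Corollary \ref{cor:any_info_classful}, was derived: by combining the two immediately preceding lemmas with the defining property of a classful subgroup. The three ingredients are the realizability of an arbitrary information by some element of $G_1$ (Lemma \ref{lem:any_inf_monitored_main3}), the conjugation-invariance of the information set $\I_x(\cdot)$ (Lemma \ref{lem:inf_class_inv_main3}), and the fact that a classful $H$ meets every $G$-conjugacy class.

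First I would fix $f \in G(J_{x,0})$ and invoke Lemma \ref{lem:any_inf_monitored_main3} to produce an element $g \in G_1$ with $f \in \I_x(g)$. This $g$ need not lie in $H$, so the next step is to move it into $H$ by a conjugation that does not disturb its information. Since $H$ is classful, its intersection with the conjugacy class of $g$ is nonempty, so there is some $h_0 \in G$ with $g^{h_0} = h_0 g h_0^{-1} \in H$. Setting $h := g^{h_0}$, Lemma \ref{lem:inf_class_inv_main3} yields $\I_x(h) = \I_x(g^{h_0}) = \I_x(g)$, and in particular $f \in \I_x(h)$.

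It remains to verify that $h$ lies not only in $H$ but in $H \cap G_1$, i.e. that $g^{h_0} \in G_1$; this is the only point that genuinely requires checking, and it is where I expect the (mild) obstacle to sit, since $G_1$ is not closed under multiplication and one must confirm it is nonetheless stable under conjugation by $G$. The inequality $g^{h_0}(t) > t$ for all $t$ follows at once from $g(s) > s$ together with the monotonicity of $h_0$. For the end behavior, recall that any $h_0 \in G$ acts as an integer translation $t \mapsto t + l$ on each end linear zone (as used in the proof of Lemma \ref{lem:inf_class_inv_main3}); hence on the end linear zones of $g^{h_0}$ the two translations coming from $h_0$ and $h_0^{-1}$ cancel against the action $t \mapsto t+1$ of $g$, giving $g^{h_0}(t) = t+1$ there. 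Thus $g^{h_0} \in G_1$, so $h = g^{h_0} \in H \cap G_1$ is the desired element, completing the proof.
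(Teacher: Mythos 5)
Your proof is correct and is essentially the paper's argument: the paper derives the corollary directly from Lemmas \ref{lem:any_inf_monitored_main3} and \ref{lem:inf_class_inv_main3} together with classfulness, exactly as you do. Your additional check that $G_1$ is stable under conjugation by $G$ (via the integer-translation form of elements of $G$ on their end linear zones) is a detail the paper leaves implicit, and your verification of it is right.
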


\begin{proof}
  This is a consequence of Lemmas \ref{lem:any_inf_monitored_main3} and \ref{lem:inf_class_inv_main3}.
\end{proof}

By Corollary \ref{cor:any_info_classful_main3}, we take $h_0 \in H\cap G_1$ such that $\id \in \I_x(h_0)$.
Let $J_{x,a}$ and $J_{x,b}$ be monitoring intervals of $h_0$ with information $\id$.
By definition, there exists a positive integer $N_0$ such that $h_0^{N_0}(J_{x,a}) = J_{x,b}$ and
\begin{align*}
  \id = \phi_{x,b}^{-1} h_0^{N_0} \phi_{x,a}.
\end{align*}
Since $h_0 \in G_1$ and $J_{x,a}$ and $J_{x,b}$ lie in the end linear zones of $h_0$, we have
\begin{align}\label{info_id_main3}
  \id = \phi_{x,h_0^n(b)}^{-1} h_0^{N_0 + 2n} \phi_{x,h_0^{-n}(a)} = \phi_{x,b+n}^{-1} h_0^{N_0 + 2n} \phi_{x,a-n}
\end{align}
for every positive integer $n$.

\begin{proposition}\label{prop:B3_main3}
  For $J_{x,a}$ above, we have $H|_{J_{x,a}} = G(J_{x,a})$.
  In particular, Condition B'(3) is satisfied.
\end{proposition}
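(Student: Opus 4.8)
The plan is to follow the proof of Proposition~\ref{prop:B'(3)_main1} almost verbatim, replacing the real-parameter monitoring machinery $(\I, \phi_a, J_a)$ by its integer-parameter counterpart $(\I_x, \phi_{x,n}, J_{x,n})$ set up above. One inclusion is immediate: Condition~A'(3) gives $H|_{J_{x,a}} \subset G|_{J_{x,a}} = G(J_{x,a})$, so the entire content lies in the reverse inclusion $G(J_{x,a}) \subset H|_{J_{x,a}}$.

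First I would take an arbitrary $\hf \in G(J_{x,a})$ and transport it to $J_{x,0}$ by setting $f = \phi_{x,a}^{-1} \hf \phi_{x,a} \in G(J_{x,0})$. Corollary~\ref{cor:any_info_classful_main3} then furnishes $h_1 \in H \cap G_1$ with $f \in \I_x(h_1)$; let $J_{x,c}$ and $J_{x,d}$ be monitoring intervals of $h_1$ with information $f$, so that $f = \phi_{x,d}^{-1} h_1^{N_1} \phi_{x,c}$ for some positive integer $N_1$. Exactly as in~(\ref{info_id_main3}), the fact that $h_1 \in G_1$ acts by $t \mapsto t+1$ on its end linear zones yields
\[
  f = \phi_{x,d+n}^{-1} h_1^{N_1 + 2n} \phi_{x,c-n}
\]
for every positive integer $n$. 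Combining this with~(\ref{info_id_main3}) produces the integer analog of~(\ref{f=idf}), namely
\[
  f = \phi_{x,a-n}^{-1} h_0^{-N_0 - 2n} \phi_{x,b+n}\, \phi_{x,d+n}^{-1} h_1^{N_1 + 2n} \phi_{x,c-n}.
\]

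The step that requires genuine care in the present setting is the production of an auxiliary element $h_2 \in H$ whose end linear germs are the prescribed integer translations $t \mapsto t + (c-a)$ at $-\infty$ and $t \mapsto t + (d-b)$ at $\infty$. I would argue as follows. The germ of any element of $G = H(\ZZ)$ at $\pm\infty$ is a $\PSL(2,\ZZ)$ transformation fixing $\infty$ and preserving orientation, hence an integer translation; thus the pair of translation amounts at the two ends defines a map $G \to \ZZ^2$, and by a construction of the type used in Lemma~\ref{lem:surj_main3} every pair in $\ZZ^2$ is realized. Moreover these translation amounts are conjugacy invariants: if $g' \in G$ has germ $t \mapsto t+r$ at an end, then $g' g (g')^{-1}$ has the same translation germ there as $g$, since the slope-$1$ conjugation cancels the shift $r$. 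Consequently the $G$-conjugacy class of any element with germs $(c-a, d-b)$ consists entirely of elements with those germs, and classfulness of $H$ places one such element $h_2$ inside $H$. Because $c-a, d-b \in \ZZ$, these germs are admissible for $H(\ZZ)$, which is precisely why the argument survives the passage from $\PP_1$ to $H(\ZZ)$; this is the only point where the integrality hypothesis is genuinely used, and I expect it to be the main obstacle.

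Finally I would choose $n$ large enough that $J_{x,a-n}$ and $J_{x,b+n}$ lie in the end linear zones of both $h_0$ and $h_2$. Then $h_2 \phi_{x,a-n} = \phi_{x,c-n}$, $h_2 \phi_{x,b+n} = \phi_{x,d+n}$, and $h_0^n \phi_{x,a-n} = \phi_{x,a}$; substituting these into the displayed expression for $f$ telescopes it to
\[
  f = \phi_{x,a}^{-1}\, h_0^{-N_0 - n} h_2^{-1} h_1^{N_1 + 2n} h_2 h_0^{-n}\, \phi_{x,a}.
\]
This exhibits $\hf = (h_0^{-N_0 - n} h_2^{-1} h_1^{N_1 + 2n} h_2 h_0^{-n})|_{J_{x,a}}$ as the restriction of an element of $H$ that stabilizes $J_{x,a}$, so $\hf \in H|_{J_{x,a}}$. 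This gives the reverse inclusion, establishes $H|_{J_{x,a}} = G(J_{x,a})$, and hence Condition~B'(3). Apart from the germ-realization point flagged above, every remaining step is a direct transcription of the earlier argument.
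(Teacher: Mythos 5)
Your proof is correct and follows the paper's argument for Proposition \ref{prop:B3_main3} essentially verbatim: the same transport of $\hf$ to $J_{x,0}$, the same use of Corollary \ref{cor:any_info_classful_main3} and the identities (\ref{info_id_main3})--(\ref{f=idf_main3}), and the same auxiliary element $h_2$ obtained from classfulness. The only difference is that you spell out why an $h_2\in H$ with the prescribed integer-translation germs exists (realizability in $G$ plus class-invariance of the end germs), a step the paper leaves implicit; this is a harmless and accurate elaboration.
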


\begin{proof}
  By Condition A'(3), we have $H|_{J_{x,a}} \subset G|_{J_{x,a}} = G(J_{x,a})$.

  We show that $G(J_{x,a}) \subset H|_{J_{x,a}}$.
  Let $\hf$ be an element of $G(J_{x,a})$, and set
  \[
    f = \phi_{x,a}^{-1} \hf \phi_{x,a} \in G(J_{x,0}).
  \]
  By Corollary \ref{cor:any_info_classful_main3}, we take $h_1 \in H \cap G_1$ satisfying $f \in \I_x(h_1)$.
  Let $J_{x,c}$ and $J_{x,d}$ be monitoring intervals of $h_1$ with information $f$.
  By definition, there exists a positive integer $N_1$ such that $h_1^{N_1}(J_{x,c}) = J_{x,d}$ and
  \begin{align*}
    f = \phi_{x,d}^{-1} h_1^{N_1} \phi_{x,c}.
  \end{align*}
  As with (\ref{info_id_main3}), we have
  \begin{align}\label{info_f_main3}
    f = \phi_{x,h_1^n(d)}^{-1} h_1^{N_1} \phi_{x,h_1^{-n}(c)} = \phi_{x,d+n}^{-1} h_1^{N_1 + 2n} \phi_{x,c-n}
  \end{align}
  for every positive integer $n$.
  Then, by (\ref{info_id_main3}) and (\ref{info_f_main3}), we obtain
  \begin{align}\label{f=idf_main3}
    f = \phi_{x,a-n}^{-1} h_0^{-N_0 - 2n} \phi_{x,b+n}  \phi_{x,d+n}^{-1} h_1^{N_1 + 2n} \phi_{x,c-n}.
  \end{align}
  Since $H$ is classful in $G$, there exists $h_2 \in H$ such that $h_2(t) = t - a + c$ (resp. $h_2(t) = t - b + d$) on the end linear zone at $-\infty$ (resp. at $\infty$) of $h_2$.
  We take $n$ large enough so that $J_{x,a-n}$ and $J_{x,b+n}$ lie in the end linear zones of $h_2$.
  Then we have $h_2 \phi_{x,a-n} = \phi_{x,c-n}$ and $h_2 \phi_{x,b+n} = \phi_{x,d+n}$.
  Hence, together with (\ref{f=idf_main3}) and the equality $h_0^{n} \phi_{x,a-n} = \phi_{x,a}$, we obtain
  \begin{align*}
    f = \phi_{x,a}^{-1} h_0^{-N_0 - n} h_2^{-1} h_1^{N_1 + 2n} h_2 h_0^{-n} \phi_{x,a}.
  \end{align*}
  This implies that $\hf = (h_0^{-N_0 - n} h_2^{-1} h_1^{N_1 + 2n} h_2 h_0^{-n})|_{J_{x,a}}$.
  Hence $\hf$ is contained in $H|_{J_{x,a}}$.
\end{proof}

\begin{proof}[Proof of Theorem \textup{\ref{mainthm} (3)}]
  Theorem \ref{thm:conditions}, together with Propositions \ref{prop:A_main3}, \ref{prop:B(1)_main3}, \ref{prop:B2_main3} and \ref{prop:B3_main3}, concludes that $G$ is invariably generated.
\end{proof}

\section*{Acknowledgments}
The author would like to thank Yoshifumi Matsuda for discussions and comments.
The author is supported by JSPS KAKENHI Grant Number JP21J11199.

\appendix

\section{Invariable generation of groups of PL homeomorphisms of $[0,1]$}\label{sec:appendix}
In this appendix, we consider invariable generation of groups of PL homeomorphisms of the closed interval $[0,1]$. 
The goal of this appendix is to show Theorem \ref{thm:appendix_main}.

\subsection{Conditions for invariable generation}
In \cite{MR4172581}, Matsuda and Matsumoto provided conditions for groups of PL homeomorphisms of $[0,1]$ to be invariably generated.
Moreover, they showed that the Richard Thompson group $F$ (and certain subgroups of the group $\PL$ of PL homeomorphisms of $[0,1]$) satisfies the conditions.
The conditions are as follows.
Let $G$ be a subgroup of the group $\PL$ of orientation preserving PL homeomorphisms of the interval $[0,1]$.
For a subset $X$ of the open interval $(0,1)$, we set $X^* = X \cup \{ 0, 1 \}$.
A closed interval $I$ in $[0,1]$ is called an $X$-interval (resp., $X^*$-interval) if the endpoints of $I$ are contained in $X$ (resp., $X^*$).
For a subgroup $H$ of $G$ and an $X^*$-interval $I$, we set
\begin{align*}
  H(I) &:= \{ h|_I \colon h \in H, \Supp(h) \subset I \}, \\
  H|_I &:= \{ h|_I \colon h \in H, h(I) = I \}.
\end{align*}

\begin{definition}[\cite{MR4172581}]\label{def:MMCondA}
  Let $G$ be a subgroup of $\PL$ and $X$ a subset of $(0,1)$.
  We say that \emph{$G$ satisfies Condition A with respect to $X$} if the following hold:
  \begin{enumerate}
    \item[A$(1)$] $X$ is a dense $G$-invariant subset of $(0,1)$ which contains all the breakpoints of every element of $G$.
    \item[A$(2)$] $G$ acts on $X$ transitively.
    \item[A$(3)$] For every $X^*$-interval $I$, the equality $G|_I = G(I)$ holds.
    \item[A$(4)$] For every $X^*$-interval $I$, there exists a PL homeomorphism $\psi_I \colon [0,1] \to I$ such that $\psi_I(X^*) = X^* \cap I$ and $G^{\psi_I} = G(I)$, where $G^{\psi_I} = \{ g^{\psi_I} \colon I \to I \mid g \in G \}$.
  \end{enumerate}
\end{definition}

\begin{definition}[\cite{MR4172581}]\label{def:MMCondB}
  Let $G$ be a subgroup of $\PL$ and $X$ a subset of $(0,1)$.
  We say that \emph{$G$ satisfies Condition B with respect to $X$} if the following hold:
  \begin{enumerate}
    \item[B$(1)$] Every classful subgroup $H$ of $G$ acts on $X$ transitively.
    \item[B$(2)$] For every classful subgroup $H$, there exists an $X$-interval $I_0$ such that $H|_{I_0} = G(I_0)$.
  \end{enumerate}
\end{definition}

\begin{theorem}[\cite{MR4172581}]\label{thm:MMthm_cond_inv_gen}
  Let $G$ be a subgroup of $\PL$.
  If there exists $X \subset (0,1)$ such that $G$ satisfies Conditions A and B with respect to $X$, then $G$ is invariably generated.
\end{theorem}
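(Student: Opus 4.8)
The plan is to mirror, on the compact interval $[0,1]$, the proof of Theorem~\ref{thm:conditions} carried out in Section~\ref{sec:cond_for_inv_gen}, with the two ends $\pm\infty$ replaced by the two endpoints $0$ and $1$. Throughout, fix a classful subgroup $H$ of $G$; the goal is to prove $G \subset H$. For $f \in G$ write $s(f) = \sup\{t : f|_{[0,t]} = \id\}$ and $i(f) = \inf\{t : f|_{[t,1]} = \id\}$; by Condition A(1) these markers lie in $X^*$, and they play exactly the role of the markers in Lemmas~\ref{lem:classful_inf_int} and~\ref{lem:classful_X_int}. The one structurally new point is that the second-endpoint transitivity supplied in the projective setting by Condition B'(2) is not among the Matsuda--Matsumoto conditions; it must instead be manufactured from the self-similarity Condition A(4) together with Condition B(1), and this is where the main work lies.

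First I would establish the base localization statement: for every $X^*$-interval of the form $I = [v,1]$ (and symmetrically $[0,w]$) the group $H(I)$ is classful in $G(I)$. The argument copies Lemma~\ref{lem:classful_inf_int} verbatim: given $f$ with $\Supp(f) \subset I$, classfulness of $H$ yields $g_1 \in G$ with $f^{g_1} \in H$; since $s(f^{g_1}) = g_1(s(f))$ lies in the single $G$-orbit $X$ (Condition A(2)), Condition B(1) produces $g_2 \in H$ with $g_2 g_1$ fixing $s(f)$; and Condition A(3) lets one cut $g_2 g_1$ off to an element $g$ with $\Supp(g) \subset [s(f),1] \subset I$ and $f^g = f^{g_2 g_1} \in H$. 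Next comes the key step: I claim that for such a half-interval $I = [v,1]$ the classful group $H(I)$ acts transitively on $X \cap (v,1)$. Here Condition A(4) furnishes a PL homeomorphism $\psi_I \colon [0,1] \to I$ with $\psi_I(X^*) = X^* \cap I$ and $\psi_I G \psi_I^{-1} = G(I)$; conjugation by $\psi_I^{-1}$ is then a group isomorphism $G(I) \to G$ carrying conjugacy classes to conjugacy classes, so it sends the classful subgroup $H(I)$ of $G(I)$ to a classful subgroup of $G$. By Condition B(1) the latter is transitive on $X$, and transporting back through $\psi_I$ gives transitivity of $H(I)$ on $\psi_I(X) = X \cap (v,1)$, as claimed. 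With this in hand the double-transitivity statement of Lemma~\ref{lem:double_trans} follows exactly as there (move $x_0$ to $x_1$ by Condition B(1), then move $h_1(y_0)$ to $y_1$ inside $[x_1,1]$ by the transitivity just proved), and the general localization Lemma~\ref{lem:classful_X_int}---that $H(I)$ is classful in $G(I)$ for every $X$-interval $I$---goes through as well, using the half-interval case to move $s(f)$ and the manufactured transitivity on $[a,1]$ to move $i(f)$.

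From here the remaining steps are formal copies of the projective argument. The analogue of Lemma~\ref{lem:equal_X_interval} shows that if $H|_{I_0} = G(I_0)$ for some $X$-interval $I_0$ then $H(I_0) = G(I_0)$, and the analogue of Corollary~\ref{cor:HI=GI} propagates this to every $X$-interval $I$: since $G$ is transitive on $X$ (Condition A(2)), double transitivity produces $h \in H$ with $h(I_0) = I$, whence $H(I) \supset H(I_0)^h = G(I_0)^h = G(I)$. Taking the $X$-interval $I_0$ from Condition B(2), so that $H|_{I_0} = G(I_0)$, we get $H(I) = G(I)$ for all $X$-intervals $I$. Using density of $X$ (Condition A(1)) choose $X$-intervals $I_n = [a_n,b_n]$ with $a_n \to 0$, $b_n \to 1$ exhausting $(0,1)$; writing $G_c = \{g \in G : \Supp(g) \text{ is a compact subset of } (0,1)\} = \bigcup_n G(I_n)$ and $H_c = H \cap G_c = \bigcup_n H(I_n)$, the equalities $H(I_n) = G(I_n)$ give $H_c = G_c$, so $G_c \subset H$. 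Finally, for arbitrary $f \in G$ classfulness gives $g \in G$ with $f^g \in H$; since $f$ and $g$ lie in $\PL$ and $g$ fixes $0$ and $1$, the endpoint slopes of $g$ cancel in the conjugate, so $f^g$ has the same endpoint germs as $f$ and the commutator $[g,f] = f^g f^{-1}$ is the identity near $0$ and $1$, i.e.\ $[g,f] \in G_c \subset H$; then $f = [g,f]^{-1} f^g \in H$. I expect the delicate point to be the self-similarity/transitivity step of the second paragraph---verifying that conjugation by $\psi_I^{-1}$ really carries classful subgroups to classful subgroups and thereby manufactures the second-endpoint transitivity that Condition B'(2) granted for free in the projective case; everything else is a routine transcription of Section~\ref{sec:cond_for_inv_gen}.
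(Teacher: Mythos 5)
Your proof is correct and follows essentially the approach the paper itself points to: the paper does not reprove this theorem (it is quoted from Matsuda--Matsumoto \cite{MR4172581}), but it states that the proof is essentially the same as that of Theorem \ref{thm:conditions}, which is exactly the template you transcribe to $[0,1]$. The one step you had to supply beyond that template --- manufacturing the half-interval transitivity (the analogue of Condition B'(2)) from Condition A(4) by first showing $H(I)$ is classful in $G(I)$, transporting it through $\psi_I$ to a classful subgroup of $G$, and invoking B(1) --- is correctly identified as the point where A(4) is needed, and your derivation of it is sound.
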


Matsuda and Matsumoto showed that the group $F$ satisfies Conditions A and B with respect to the set of dyadic rationals $\ZZ[1/2] \cap (0,1)$, and concluded that $F$ is invariably generated.

Theorem \ref{thm:MMthm_cond_inv_gen} is not applicable to the Higman--Thompson group $F_n$ for $n \geq 3$ and $X = \ZZ[1/n] \cap (0,1)$ since $F_n$ does not satisfy Condition A(2) and B(2) with respect to $\ZZ[1/n] \cap (0,1)$.
Instead, we show in Subsection \ref{sec:appendix_Higman} that the Higman--Thompson group $F_n$ satisfies the following weaker forms (Conditions A' and B') of Conditions A and B with respect to $\ZZ[1/n] \cap (0,1)$, which also induces the invariable generation of $F_n$.

\begin{definition}
  Let $G$ be a subgroup of $\PL$ and $X$ a subset of $(0,1)$.
  We say that \emph{$G$ satisfies Condition A' with respect to $X$} if the following hold:
  \begin{enumerate}
    \item[A'$(1)$] $X$ is a $G$-invariant subset of $(0,1)$ which contains all the breakpoints of every element of $G$.
    \item[A'$(2)$] For every $x \in X$, the point $0$ and $1$ are accumulation points of the orbit $G\cdot x \subset [0,1]$.
    \item[A'$(3)$] For every $X^*$-interval $I$, the equality $G|_I = G(I)$ holds.
  \end{enumerate}
\end{definition}

\begin{definition}
  Let $G$ be a subgroup of $\PL$ and $X$ a subset of $[0,1]$.
  We say that \emph{$G$ satisfies Condition B' with respect to $X$} if the following hold:
  \begin{enumerate}
    \item[B'$(1)$] For every classful subgroup $H$ of $G$ and every $x \in X$, the action of $H$ on the orbit $G \cdot x$ is transitive.
    \item[B'$(2)$] For every classful subgroup $H$ of $G$, every $X^*$-interval $I$ of the form $[v, 1]$, and every $x \in I\cap X$, the action of $H(I)$ on $(G \cdot x) \cap (v,1]$ is transitive.
    \item[B'$(3)$] For every classful subgroup $H$ of $G$, there exists an $X$-interval $I_0$ such that $H|_{I_0} = G(I_0)$.
  \end{enumerate}
\end{definition}

Conditions A' and B' provide a criterion for the invariable generation of $G$ as follows.

\begin{theorem}\label{thm:conditions_PL}
  Let $G$ be a subgroup of $\PL$.
  If there exists $X \subset (0,1)$ such that $G$ satisfies Conditions A' and B' with respect to $X$, then $G$ is invariably generated.
\end{theorem}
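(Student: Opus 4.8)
The plan is to follow the proof of Theorem~\ref{thm:conditions} almost verbatim, replacing the two-ended real line by the bounded interval $[0,1]$: the roles of $-\infty$ and $+\infty$ are now played by the endpoints $0$ and $1$, an $X^*$-interval of the form $[v,\infty)$ becomes one of the form $[v,1]$, and the unboundedness hypothesis of the line case is superseded by the accumulation-point hypothesis A'$(2)$ stated here. For $f \in G$ I set $s(f) := \sup\{t \colon f|_{[0,t]} = \id\}$ and $i(f) := \inf\{t \colon f|_{[t,1]} = \id\}$, exactly as before.

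First I would establish the interval-level results, whose proofs are word-for-word translations of Lemmas~\ref{lem:classful_inf_int}--\ref{lem:double_trans} and Corollary~\ref{cor:HI=GI}. Namely: (i) for a classful subgroup $H$ of $G$ and an $X^*$-interval $I = [v,1]$, the group $H(I)$ is classful in $G(I)$; given a nonidentity $f$ with $\Supp(f) \subset I$, one uses classfulness to find $g_1$ with $f^{g_1} \in H$, applies B'$(1)$ to obtain $g_2 \in H$ carrying $g_1(s(f))$ back to $s(f)$, and splices via A'$(3)$ to produce $g$ supported in $I$ with $f^g \in H$. (ii) The identical splicing argument, now invoking B'$(2)$ in place of B'$(1)$, gives that $H(I)$ is classful in $G(I)$ for every $X$-interval $I$. (iii) From an $X$-interval $I_0$ with $H|_{I_0} = G(I_0)$ one deduces $H(I_0) = G(I_0)$. (iv) Conditions B'$(1)$ and B'$(2)$ yield the double transitivity of $H$ on ordered pairs of orbit points. (v) Conjugating $I_0$ by an element of $H$ onto a translate $I = [x_1, y_1]$ with $x_1 \in G\cdot x_0$ and $y_1 \in G\cdot y_0$ gives $H(I) = G(I)$.

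Then I would assemble the main argument exactly as in the proof of Theorem~\ref{thm:conditions}. Set $G_c := \{ g \in G \colon \Supp(g) \subset (0,1) \}$ and $H_c := G_c \cap H$. Taking the $X$-interval $I_0 = [x_0, y_0]$ furnished by B'$(3)$, I would use A'$(2)$ to choose a nested sequence of $X$-intervals $I_n = [x_n, y_n]$ with $x_n \in G\cdot x_0$, $y_n \in G\cdot y_0$, $x_n \to 0$, and $y_n \to 1$, so that $\bigcup_n I_n = (0,1)$ and hence $G_c = \bigcup_n G(I_n)$, $H_c = \bigcup_n H(I_n)$. Step (v) gives $H(I_n) = G(I_n)$ for every $n$, whence $H_c = G_c$. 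Finally, for any $f \in G$ classfulness yields $g \in G$ with $f^g \in H$; since the commutator $f^g f^{-1} = [g,f]$ lies in $G_c = H_c \subset H$ while $f^g \in H$, we conclude $f \in H$. Thus $G \subset H$, so $G = H$ and $G$ is invariably generated.

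The single genuinely new point, and the only place where the bounded-interval geometry must be checked rather than merely transcribed, is this exhaustion step: I must verify that A'$(2)$ really supplies a nested sequence of $X$-intervals, with endpoints in the prescribed orbits $G\cdot x_0$ and $G\cdot y_0$, whose union is all of $(0,1)$, and that every element whose support is compactly contained in $(0,1)$ thereby lies in some $G(I_n)$. This is precisely where the accumulation-point hypothesis takes over the role played by unboundedness of orbits in the line case, and while I expect it to require the most care, it should remain routine.
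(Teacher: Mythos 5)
Your proof is correct and takes exactly the approach the paper intends: the paper omits the proof of Theorem \ref{thm:conditions_PL}, remarking that it is almost the same as that of Theorem \ref{thm:conditions}, and your word-for-word translation (with the endpoints $0,1$ playing the roles of $\mp\infty$, and Condition A'$(2)$ supplying the exhaustion of $(0,1)$ by nested $X$-intervals with endpoints in the orbits $G\cdot x_0$ and $G\cdot y_0$) is precisely that adaptation. The final commutator step still goes through because the germs of elements of $\PL$ at $0$ and $1$ are linear maps $t\mapsto\lambda t$, which commute, so $[g,f]$ indeed lies in $G_c$.
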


We omit the proof because it was essentially given in the proof of Theorem \ref{thm:MMthm_cond_inv_gen} in \cite{MR4172581}, and it is also almost the same as that of Theorem \ref{thm:conditions}.

\subsection{Proof of Theorem \ref{thm:appendix_main} (1)}\label{sec:appendix_Higman}

By using Theorem \ref{thm:conditions_PL}, we show Theorem \ref{thm:appendix_main} (1).

From now on, let $n$ be an integer greater than one.
The Higman--Thompson group $F_n$ is the group of PL homeomorphisms of the closed interval $[0,1]$ whose slopes are powers of $n$ and breakpoints are contained in $n$-adic rationals $\ZZ[1/n]$.
(See \cite{MR885095} for properties of $F_n$.)
We set $X = \ZZ[1/n] \cap (0,1)$.
The following is clear.
\begin{proposition}\label{prop:appendix_A}
  The group $F_n$ satisfies Condition A' with respect to $X$.
\end{proposition}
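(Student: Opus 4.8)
The plan is to check the three clauses of Condition A' in turn for $G = F_n$ and $X = \ZZ[1/n] \cap (0,1)$; clauses A'(1) and A'(3) are purely formal, and the only one requiring a construction is A'(2).

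For A'(1), I would observe that every element of $F_n$ has slopes equal to powers of $n$ and breakpoints in $\ZZ[1/n]$, so it carries $n$-adic rationals to $n$-adic rationals and fixes $0$ and $1$; hence $X = \ZZ[1/n] \cap (0,1)$ is $G$-invariant, and by definition of $F_n$ all breakpoints of every $g \in F_n$ lie in $X$. For A'(3), given an $X^*$-interval $I = [a,b]$ (so $a, b \in \ZZ[1/n]$), the inclusion $G(I) \subset G|_I$ is immediate. For the reverse, I would take $g \in F_n$ with $g(I) = I$; since $g$ then fixes the $n$-adic points $a$ and $b$, the map $\tilde g$ equal to $g$ on $I$ and to the identity on $[0,1] \setminus I$ is again a PL homeomorphism with slopes powers of $n$ and breakpoints in $\ZZ[1/n]$, hence $\tilde g \in F_n$ with $\Supp(\tilde g) \subset I$ and $\tilde g|_I = g|_I$. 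This gives $G|_I \subset G(I)$, and A'(3) follows.

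The substantive clause is A'(2), where it is crucial that one only needs $0$ and $1$ to be accumulation points of $G \cdot x$ rather than transitivity of the $G$-action on $X$ (the latter fails for $n \geq 3$). I would produce a single element $g \in F_n$ realizing north--south dynamics on $[0,1]$: one that equals $t \mapsto t/n$ on a neighborhood $[0,\delta]$ of $0$, satisfies $g(t) < t$ for all $t \in (0,1)$, and fixes no interior point (the usual Thompson-type generator when $n = 2$, with an evident $n$-adic analogue in general). For such $g$ and any $x \in X$, the iterates $g^k(x)$ decrease to $0$ while $g^{-k}(x)$ increase to $1$; since all these points lie in $G \cdot x$, both endpoints are accumulation points of the orbit, which is A'(2).

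The hard part --- really the only non-bookkeeping step --- is the explicit construction in A'(2): writing down an element of $F_n$ with attracting fixed point $0$, repelling fixed point $1$, and no fixed point in $(0,1)$, so that the cyclic orbit of every $x$ accumulates at both ends. Once such $g$ is in hand the verification is immediate, which is why the proposition is stated to be clear.
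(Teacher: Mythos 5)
Your proposal is correct, and it supplies exactly the routine verification that the paper compresses into ``The following is clear'': A'(1) and A'(3) by invariance of $\ZZ[1/n]$ and the standard extend-by-the-identity gluing at $n$-adic endpoints, and A'(2) from a single fixed-point-free element of $F_n$ (e.g.\ the inverse of an element of the set $F_{n,1,-1}$ that the paper itself uses later), whose forward and backward orbits of any $x$ accumulate at $0$ and $1$. You also correctly identify the one genuinely relevant point, namely that only accumulation (A'(2)) and not transitivity (A(2)) can be expected for $n \geq 3$, which is precisely the paper's stated reason for weakening Condition A to Condition A'.
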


Let $\alpha \colon F_n \to \ZZ^2$ be the map defined by
\[
  \alpha(f) = (\log_n f'(0), \log_n f'(1)).
\]
Let $F_{n,1,-1}$ be the subset of $F_n$ whose elements $f$ satisfy $f(t) > t$ for every $t \in (0,1)$ and $\alpha(f) = (1,-1)$.

For every $j \in \ZZ_{\geq 0}$, let $\Phi_{j} \colon [1-n^{-j}, 1-n^{-j-1}] \to [0,1]$ be the orientation preserving affine homeomorphism.
For $x \in X$, we set $X_x = \Phi_j\big((F_n \cdot x)\cap [1-n^{-j}, 1-n^{-j-1}]\big)$.
Note that $X_x$ is independent of the choice of $j \in \ZZ_{\geq 0}$.

We fix an element $g_0 \in F_{n,1,-1}$ and take an element $x$ of $X$.
Note that $g_0^{-n}(x)$ tends to $0$ as $n$ tends to $\infty$.
Let $f$ be an element of $F_{n,1,-1}$.
As with piecewise projective homeomorphisms, the \emph{end linear zone at $0$ \textup{(}resp. at $1$\textup{)}} of an element $f \in \PL$ is the maximal interval $[0, t]$ (resp. $[1-t, 1]$) on which $f$ is affine.
Let $i$ be an integer satisfying that the interval $[g_0^{-i-1}(x), g_0^{-i}(x)]$ is contained in the end linear zone of $f$ at $0$.
Then the interval $[g_0^{-i-1}(x), g_0^{-i}(x)]$ is a fundamental domain of $f$.
Let $m$ be an integer such that $f^m(g_0^{-i}(x))$ lies in the end linear zone of $f$ at $1$.
Then there exists a unique $j \in \ZZ_{\geq 0}$ such that $f^m(g_0^{-i}(x)) \in (1-n^{-j}, 1-n^{-j-1}]$.
We define $\beta_x \colon F_{n,1,-1} \to X_x$ by
\[
  \beta_x (f) = \Phi_{j}\big(f^m(g_0^{-i}(x))\big).
\]
This $\beta_x$ does not depend on the choice of $i, m$ and $j$ but depends on the choice of $g_0$.

\begin{lemma}\label{lem:appendix_beta_class_inv}
  The map $\beta_x \colon F_{n,1,-1} \to X_x$ is class invariant, that is, it satisfies $\beta_x(f^g) = \beta_x{f}$ for every $f \in F_{n,1,-1}$ and $g \in F_n$.
\end{lemma}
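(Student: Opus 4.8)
The plan is to reproduce the computation behind Lemma~\ref{lem:beta_class_inv}, with the additive action of translations near $\pm\infty$ replaced by the multiplicative action of powers of $n$ near the endpoints $0$ and $1$. Fix $f \in F_{n,1,-1}$ and $g \in F_n$, and write $\alpha(g) = (p,q)$; thus $g(t) = n^{p}t$ on the end linear zone of $g$ at $0$, and $1 - g(t) = n^{q}(1-t)$ on the end linear zone of $g$ at $1$. Following the definition of $\beta_x$, I would fix integers $i, m$ and $j \in \ZZ_{\geq 0}$ realizing the value $\beta_x(f)$: so that $[g_0^{-i-1}(x), g_0^{-i}(x)]$ is a fundamental domain of $f$ lying in its end linear zone at $0$, the point $w := f^m(g_0^{-i}(x))$ lies in the end linear zone of $f$ at $1$ with $w \in (1 - n^{-j}, 1 - n^{-j-1}]$, and $\beta_x(f) = \Phi_j(w)$.

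First I would enlarge $i$ and $m$ so that, simultaneously, the fundamental domain $[g_0^{-i-1}(x), g_0^{-i}(x)]$ lies inside the end linear zones of $g_0$, $g$ and $f^g$ at $0$, the point $w$ lies inside the end linear zone of $g$ at $1$, and $j \geq q$; this is possible since all these zones are genuine one-sided neighborhoods of the endpoints and enlarging $m$ drives $j$ to $+\infty$. Because $g_0 \in F_{n,1,-1}$ has slope $n$ at $0$, on this zone $g_0^{-1}$ is multiplication by $n^{-1}$, so $g(g_0^{-i}(x)) = n^{p}\, g_0^{-i}(x) = g_0^{-(i-p)}(x)$. As $g$ conjugates $f$ to $f^g$, it carries the fundamental domain $[g_0^{-i-1}(x), g_0^{-i}(x)]$ of $f$ to the fundamental domain $[g_0^{-(i-p)-1}(x), g_0^{-(i-p)}(x)]$ of $f^g$, now lying in the end linear zone of $f^g$ at $0$ and with right endpoint $g_0^{-(i-p)}(x)$.

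Next I would push this right endpoint forward by $f^g$. Using $(f^g)^m = g f^m g^{-1}$ and $g^{-1}(g_0^{-(i-p)}(x)) = g_0^{-i}(x)$, I get $(f^g)^m(g_0^{-(i-p)}(x)) = g(w)$. On the end linear zone of $g$ at $1$ we have $1 - g(w) = n^{q}(1-w)$, so with $u := 1-w \in [n^{-j-1}, n^{-j})$ the point $g(w)$ satisfies $1 - g(w) = n^{q}u \in [n^{q-j-1}, n^{q-j})$; hence $g(w)$ lies in the end linear zone of $f^g$ at $1$ with index $j' = j - q \in \ZZ_{\geq 0}$. A one-line evaluation of the affine maps $\Phi_j$ and $\Phi_{j-q}$ then shows $\Phi_{j-q}(g(w)) = \Phi_j(w)$, since the factor $n^{q}$ introduced by $g$ near $1$ is exactly undone by the change of normalization from $\Phi_j$ to $\Phi_{j-q}$. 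Reading off the definition of $\beta_x$ with the witnesses $(i-p, m, j-q)$, this gives $\beta_x(f^g) = \Phi_{j-q}(g(w)) = \Phi_j(w) = \beta_x(f)$.

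The routine but essential point is the bookkeeping in the middle paragraph: one must verify that a single enlargement of $i$ and $m$ places every interval inside all the end linear zones invoked (those of $f$, $g$, $g_0$ and $f^g$), and that the two scaling identities $g(g_0^{-i}(x)) = g_0^{-(i-p)}(x)$ and $g^{-1}(g_0^{-(i-p)}(x)) = g_0^{-i}(x)$ survive this enlargement. Once this is arranged, the independence of $\beta_x$ from the auxiliary data $i, m, j$ --- already established when $\beta_x$ was defined --- legitimizes using the triple $(i-p, m, j-q)$ to evaluate $\beta_x(f^g)$, and the proof is complete.
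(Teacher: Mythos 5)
Your proof is correct and is exactly the argument the paper intends: the paper omits the proof of this lemma, referring to Lemma \ref{lem:beta_class_inv}, and your computation is precisely the multiplicative analogue of that additive one (the slopes $n^{p}$, $n^{q}$ at the endpoints shifting the $g_0$-index by $p$ and the $\Phi_j$-index by $q$, with the normalization of $\Phi_{j-q}$ cancelling the factor $n^{q}$). The bookkeeping you flag in the middle paragraph is the same as in the three itemized conditions of the paper's proof of Lemma \ref{lem:beta_class_inv}, so nothing further is needed.
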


We omit the proof because it is similar to that of Lemma \ref{lem:beta_class_inv}.

\begin{lemma}\label{lem:appendix_beta_surj}
  The map $\beta_x \colon F_{n,1,-1} \to X_x$ is surjective.
\end{lemma}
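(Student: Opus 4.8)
The plan is to mimic the proof of Lemma~\ref{lem:surj_main3}, but to take advantage of the fact that inside $F_n$ one may prescribe PL maps on $n$-adic point configurations far more freely than in the projective setting; consequently no analogue of the tangency-avoidance trick appearing in Lemma~\ref{lem:surj_main3} will be needed. Given $y \in X_x$, the first step is to unwind the definition of $X_x$ to obtain a concrete target: there exist $j_0 \in \ZZ_{\geq 0}$ and $z \in (G\cdot x) \cap [1-n^{-j_0}, 1-n^{-j_0-1}]$ with $\Phi_{j_0}(z) = y$. Since $X_x$ is independent of the index and the affine map $t \mapsto 1-(1-t)/n$ carries $[1-n^{-j}, 1-n^{-j-1}]$ onto $[1-n^{-j-1}, 1-n^{-j-2}]$ compatibly with the $\Phi_j$'s, I may take $j_0$ as large as I wish, i.e.\ $z$ as close to $1$ as I wish.

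Next I would fix the data near $0$. Taking $i$ large, set $a = g_0^{-i}(x)$; since $g_0$ has slope $n$ at $0$, for $i$ large $g_0$ agrees with $t \mapsto nt$ on a neighbourhood of $0$ containing the fundamental domain $[g_0^{-i-1}(x), g_0^{-i}(x)] = [a/n, a]$. I also arrange $na < 1 - n^{-j_0}$, which is possible by taking $a$ small (large $i$) and $z$ near $1$ (large $j_0$). I then construct $f \in F_{n,1,-1}$ in three pieces: on $[0,a]$ set $f(t) = nt$; on $[1-n^{-j_0}, 1]$ set $f(t) = 1-(1-t)/n$; and on the middle interval $[a, 1-n^{-j_0}]$ let $f$ be an increasing PL homeomorphism onto $[na, 1-n^{-j_0-1}]$ with slopes powers of $n$ and $n$-adic breakpoints satisfying $f(na) = z$. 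Concretely I split this middle map as $[a, na] \to [na, z]$ followed by $[na, 1-n^{-j_0}] \to [z, 1-n^{-j_0-1}]$. The endpoint values all match the outer pieces, so $f$ is a well-defined PL homeomorphism of $[0,1]$ with $\alpha(f) = (1,-1)$. Moreover $f > \id$ holds automatically: on $[a,na]$ the image lies in $[na,z]$ with left endpoint $na \geq t$, and on $[na, 1-n^{-j_0}]$ the image lies in $[z, 1-n^{-j_0-1}]$ with $z \geq 1-n^{-j_0} \geq t$; combined with the strict inequalities at the ends this gives $f(t) > t$ on $(0,1)$. Hence $f \in F_{n,1,-1}$.

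Finally I would evaluate $\beta_x(f)$ for this $f$. The fundamental domain $[a/n, a]$ lies in the end linear zone of $f$ at $0$; we have $f(a) = na$, which by the choice $na < 1-n^{-j_0}$ still lies in the middle, so $f^2(a) = f(na) = z \in [1-n^{-j_0}, 1-n^{-j_0-1}]$, which lies in the end linear zone of $f$ at $1$. Thus one may take $m = 2$ and the relevant index is $j = j_0$, giving $\beta_x(f) = \Phi_{j_0}(z) = y$. As $y \in X_x$ was arbitrary, this proves surjectivity.

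The one genuinely non-formal step, which I expect to be the main obstacle, is the existence of the middle PL homeomorphism realizing the prescribed $n$-adic interpolation data with all slopes powers of $n$. This reduces to the claim that any two $n$-adic intervals admit an orientation-preserving PL bijection whose slopes are powers of $n$ and whose breakpoints are $n$-adic, which is the standard flexibility of $F_n$ on $n$-adic configurations: after a common $n$-ary refinement, the required order-preserving correspondence of subdivision points is realized piecewise. This is routine and markedly simpler than its projective counterpart in Lemma~\ref{lem:surj_main3}, where transversality of M\"obius graphs had to be engineered by hand.
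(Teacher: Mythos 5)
Your construction of $f$ is essentially the paper's: prescribe slope $n$ on an initial segment $[0,g_0^{-i}(x)]$ and slope $n^{-1}$ on a terminal segment, interpolate in the middle by a PL map with $n$-adic breakpoints and power-of-$n$ slopes sending $g_0^{-i+1}(x)=na$ to the preimage $z$ of $y$ near $1$, and then read off $\beta_x(f)=y$ with $m=2$; that bookkeeping is all fine. The gap is in how you justify the middle interpolation. You reduce it to the claim that \emph{any} two $n$-adic intervals admit an orientation-preserving PL bijection with slopes in $n^{\ZZ}$ and breakpoints in $\ZZ[1/n]$, and you explicitly assert that, unlike the projective case, no compatibility condition is needed. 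For $n\geq 3$ this claim is false. Every slope $n^k$ is congruent to $1$ modulo $n-1$, so any such map $[p,q]\to[r,s]$ forces $q-p\equiv s-r$ in $\ZZ[1/n]/(n-1)\ZZ[1/n]\cong\ZZ/(n-1)$; for instance there is no such map from $[0,1/3]$ to $[0,2/3]$ when $n=3$. This is precisely the phenomenon the paper emphasizes when it notes that $F_n$ does not act transitively on $\ZZ[1/n]\cap(0,1)$ for $n\geq 3$ (failure of Condition A(2)): if your claim were true, gluing a map $[0,p]\to[0,q]$ to a map $[p,1]\to[q,1]$ would produce an element of $F_n$ carrying $p$ to $q$ for arbitrary $n$-adic $p,q$.

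The argument is repairable, and the paper repairs it at exactly this point: the two pieces you need, $[a,na]\to[na,z]$ and $[na,1-n^{-j_0}]\to[z,1-n^{-j_0-1}]$, do exist because $na=g_0^{-i+1}(x)$ and $z$ lie in the \emph{same} $F_n$-orbit $G\cdot x$, so the mod-$(n-1)$ obstructions on the interval lengths vanish and the Cannon--Floyd--Parry subdivision argument goes through. This orbit hypothesis is what the paper's proof invokes (``considering that the points \dots are contained in a single orbit of $G$''), and it is the genuinely $n\geq 3$ content of the lemma; your write-up drops it and substitutes a false general statement, so as written the key existence step is unjustified.
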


\begin{proof}
  Let $y$ be an element of $X_x$.
  We take an integer $i$ satisfying that $g_0^{-i+1}(x)$ is contained in the end linear zone of $g_0$ at $0$ and that $\Phi_i^{-1}(y)$ is greater than $g_0^{-i+1}(x)$.
  We define a PL homeomorphism $f \colon [0,1] \to [0,1]$ as follows.
  On $[0,g_0^{-i}(x)] \cup [\Phi_{i}^{-1}(y),1]$, the map $f$ is defined by
  \[
    f(t) =
    \begin{cases}
      nt & t \in [0, g_0^{-i}(x)] \\
      1 + n^{-1}(t-1) & t \in [\Phi_{i}^{-1}(y),1].
    \end{cases}
  \]
  We define the map $f$ on $[g_0^{-i}(x),\Phi_{i}^{-1}(y)]$ to satisfy $f(g_0^{-i+1}(x)) = \Phi_{i}^{-1}(y)$ whose slopes are in $n^{\ZZ}$ and breakpoints are contained in $\ZZ[1/n]$.
  Considering that the points $x$ and $\Phi_{i}^{-1}(y)$ are contained in a single orbit of $G$, we can construct such a homeomorphism $[g_0^{-i}(x),\Phi_{i}^{-1}(y)] \to [g_0^{-i+1}(x), \Phi_{i+1}^{-1}(y)]$ by the argument analogous to \cite[Lemma 4.2]{cannon_floyd_parry}.
  The resulting homeomorphism $f$ is an element of $G_{1,-1}$ and satisfies $\beta_x(f) = y$.
\end{proof}

Lemmas \ref{lem:appendix_beta_class_inv} and \ref{lem:appendix_beta_surj} imply the following.
\begin{corollary}
  Let $H$ be a classful subgroup of $F_n$ and $x$ an element of $X$.
  Then the map $\beta_x$ restricted to $H\cap F_{n,1,-1}$ is surjective onto $X_x$.
\end{corollary}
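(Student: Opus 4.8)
The plan is to derive this exactly as Corollary \ref{cor:surj_classful_main3} was derived in the main text, namely as a formal consequence of the class invariance of $\beta_x$ (Lemma \ref{lem:appendix_beta_class_inv}) together with its surjectivity (Lemma \ref{lem:appendix_beta_surj}). Given a classful subgroup $H$ of $F_n$ and a point $y \in X_x$, I would first invoke Lemma \ref{lem:appendix_beta_surj} to produce some $f \in F_{n,1,-1}$ with $\beta_x(f) = y$. The goal is then to replace $f$ by a conjugate lying in $H$ while keeping the value of $\beta_x$ unchanged.

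Next I would use that $H$ is classful to find $g \in F_n$ with $f^g \in H$. The central point is then to check that $f^g$ still lies in the subgroup $F_{n,1,-1}$, so that in fact $f^g \in H \cap F_{n,1,-1}$. This amounts to verifying that both defining conditions of $F_{n,1,-1}$ are conjugation-invariant: the inequality $f(t) > t$ for every $t \in (0,1)$ is preserved under conjugation by the orientation-preserving homeomorphism $g$ fixing $0$ and $1$, and the slope data $\alpha(f) = (\log_n f'(0), \log_n f'(1))$ is unchanged because $\alpha$ is a homomorphism into the abelian group $\ZZ^2$ and is therefore constant on conjugacy classes. Hence $\alpha(f^g) = \alpha(f) = (1,-1)$, and $f^g \in F_{n,1,-1}$.

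With $f^g \in H \cap F_{n,1,-1}$ in hand, Lemma \ref{lem:appendix_beta_class_inv} gives $\beta_x(f^g) = \beta_x(f) = y$, so $y$ lies in the image of the restriction of $\beta_x$ to $H \cap F_{n,1,-1}$. Since $y \in X_x$ was arbitrary, this restriction is surjective onto $X_x$. The only genuinely nonroutine step is the conjugation-invariance check in the second paragraph; everything else repeats the formal pattern used for Corollary \ref{cor:surj_classful_main3}, so I do not expect any further obstacle.
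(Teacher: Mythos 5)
Your proposal is correct and follows exactly the route the paper intends: the paper derives this corollary by simply citing the class invariance of $\beta_x$ (Lemma \ref{lem:appendix_beta_class_inv}) and its surjectivity (Lemma \ref{lem:appendix_beta_surj}), leaving implicit precisely the steps you spell out, including the check that $F_{n,1,-1}$ is preserved under conjugation because $\alpha$ is a class function and the condition $f(t)>t$ on $(0,1)$ is conjugation-invariant. No gap.
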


The following is proved in a similar manner as in the proof of Proposition \ref{prop:B(1)_main3}.
\begin{proposition}\label{prop:appendix_B1}
  The group $F_n$ satisfies Condition B'(1) with respect to $X$.
\end{proposition}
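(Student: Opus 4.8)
The plan is to follow the argument of Proposition \ref{prop:B(1)_main3} almost verbatim, replacing the integer translations used near $\pm\infty$ in Monod's group by the dynamics of $F_{n,1,-1}$ near the two endpoints $0$ and $1$. Fix a classful subgroup $H$ of $F_n$ and a point $x \in X$; I must show that $H$ acts transitively on the orbit $F_n \cdot x$. The key idea is to produce, out of the fixed reference element $g_0 \in F_{n,1,-1}$, a sequence of reference points $g_0^{-i}(x)$ accumulating at $0$, and then to show that for every $y \in F_n \cdot x$ all sufficiently deep reference points $g_0^{-i}(x)$ lie in the single $H$-orbit of $y$. Since these reference points depend only on $x$ and not on $y$, any two points of $F_n \cdot x$ would then share a common point in their $H$-orbits, which forces transitivity.

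First I would take an arbitrary $h_1 \in H \cap F_{n,1,-1}$ and, using $h_1(t) > t$, apply a high power $h_1^{k}$ to push $y$ into the end linear zone of $h_1$ at $1$, where $h_1$ acts by the affine contraction $\sigma(t) = 1 + n^{-1}(t-1)$. Recording the normalized landing value $\alpha = \Phi_{j}(h_1^{k}(y)) \in X_x$, I then invoke the surjectivity corollary established just above to choose $h_2 \in H \cap F_{n,1,-1}$ with $\beta_x(h_2) = \alpha$. By the definition of $\beta_x$, for every sufficiently large $i$ the fundamental domain $[g_0^{-i-1}(x), g_0^{-i}(x)]$ lies in the end linear zone of $h_2$ at $0$, and a suitable forward power $h_2^{m}$ carries $g_0^{-i}(x)$ into the zone at $1$ with the same normalized value $\alpha$.

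The mechanism coupling the two orbits is the identity $\Phi_{j+1} \circ \sigma = \Phi_{j}$ on $[1 - n^{-j}, 1 - n^{-j-1}]$: applying $\sigma$ preserves the normalized coordinate and merely increases the index $j$ by one. Consequently the forward $h_1$-iterates of $y$ and the forward $h_2$-iterates of $g_0^{-i}(x)$ both run through the common fiber $\{ \Phi_J^{-1}(\alpha) \}_{J}$, and by matching indices I can equate a high $h_1$-iterate of $y$ with a high $h_2$-iterate of $g_0^{-i}(x)$; solving for the latter yields $g_0^{-i}(x) = h_2^{-M} h_1^{K}(y)$ for suitable non-negative $M, K$. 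This places every deep reference point $g_0^{-i}(x)$ in the $H$-orbit of $y$, and the transitivity conclusion follows exactly as in the proof of Proposition \ref{prop:B(1)_main3}. The only delicate step, as there, is this index-matching: one must use that $\alpha$ is genuinely independent of the auxiliary integers $i, m, j$ (the well-definedness of $\beta_x$) so that the $\alpha$-fibers for $y$ and for $g_0^{-i}(x)$ truly coincide and can be slid onto one another.
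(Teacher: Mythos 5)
Your argument is correct and is exactly the adaptation the paper intends: the published proof of this proposition simply defers to the proof of Proposition \ref{prop:B(1)_main3}, and your translation of that argument --- replacing the integer translations near $\pm\infty$ by the $g_0$-orbit near $0$ and the affine contraction $\sigma(t)=1+n^{-1}(t-1)$ near $1$, with the identity $\Phi_{j+1}\circ\sigma=\Phi_j$ doing the index bookkeeping --- is precisely the intended mechanism. The only point you leave implicit (as does the paper) is that $H\cap F_{n,1,-1}\neq\emptyset$, which holds because $F_{n,1,-1}$ is a nonempty union of conjugacy classes and $H$ is classful.
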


In the same way, we obtain the following.
\begin{proposition}\label{prop:appendix_B2}
  The group $F_n$ satisfies Condition B'(2) with respect to $X$.
\end{proposition}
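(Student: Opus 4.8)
The plan is to rerun the proof of Proposition~\ref{prop:appendix_B1} one level down, inside the group $F_n(I)$ instead of $F_n$, exactly as Proposition~\ref{prop:B2_main3} adapts Proposition~\ref{prop:B(1)_main3} in the projective setting. Fix a classful subgroup $H$ of $F_n$, an $X^*$-interval $I=[v,1]$, and $x\in I\cap X$. First I would observe that the PL analogue of Lemma~\ref{lem:classful_inf_int} applies: $F_n$ satisfies Conditions A'(1), A'(3) (Proposition~\ref{prop:appendix_A}) and B'(1) (Proposition~\ref{prop:appendix_B1}), so $H(I)$ is a classful subgroup of $F_n(I)$.

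Next I would localize the bookkeeping map to $I$. Let $F_n(I)_1$ denote the subgroup of those $f\in F_n(I)$ with $f(t)>t$ on $(v,1)$, with slope $n$ on a right neighborhood of $v$ and slope $n^{-1}$ on a left neighborhood of $1$. Fixing $g_0\in F_n(I)_1$ we have $g_0^{-k}(x)\to v$, and using the fundamental domains $[g_0^{-i-1}(x),g_0^{-i}(x)]$ accumulating at $v$ (in place of those accumulating at $0$) the recipe defining $\beta_x$ produces a map $\gamma_x\colon F_n(I)_1\to X_x$. The analogues of Lemmas~\ref{lem:appendix_beta_class_inv} and~\ref{lem:appendix_beta_surj} hold by the same arguments, the surjectivity being witnessed by the interval construction in the proof of Lemma~\ref{lem:appendix_beta_surj}; since $H(I)$ is classful in $F_n(I)$, the restriction $\gamma_x|_{H(I)\cap F_n(I)_1}$ is surjective onto $X_x$.

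Finally I would transcribe the transitivity step of Proposition~\ref{prop:B2_main3}. Given $y\in(F_n\cdot x)\cap(v,1]$, take any $h_1\in H(I)\cap F_n(I)_1$; for large $k$ the iterate $h_1^k(y)$ enters the left neighborhood of $1$ and occupies some phase $\alpha\in X_x$, which is realized by some $h_2\in H(I)\cap F_n(I)_1$ with $\gamma_x(h_2)=\alpha$ by the surjectivity just obtained. Combining the forward march of $h_1$ toward $1$ with $h_2$ exactly as before shows $\{g_0^{-k}(x)\colon k\geq k_0\}\subset H(I)\cdot y$. Because an orientation-preserving affine map of slope $n$ fixing $v$ is unique, every element of $F_n(I)_1$ has the germ $t\mapsto v+n(t-v)$ at $v$; in particular $h_1$ and $g_0$ agree near $v$, so this tail is a single $\langle h_1\rangle$-orbit. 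As the tail does not depend on $y$, any two points of $(F_n\cdot x)\cap(v,1]$ lie in a common $H(I)$-orbit, which is Condition B'(2).

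The main obstacle is the bookkeeping at the interior endpoint $v$. On the full line (and, in the projective case, through the parabolic generator $f_v$) there is a canonical germ at the left end; here there is none a priori, so one must check both that $\gamma_x$ is well defined and that $g_0$ and $h_1$ share a germ at $v$. Both are settled by the uniqueness of the slope-$n$ affine germ $t\mapsto v+n(t-v)$ fixing $v$; granting this, each step is a direct transcription of proofs already given.
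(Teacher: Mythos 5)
Your proposal is correct and follows exactly the route the paper intends: the paper's own ``proof'' of Proposition~\ref{prop:appendix_B2} is the single remark that it goes ``in the same way'' as Proposition~\ref{prop:appendix_B1}, i.e.\ by localizing the bookkeeping map to $I=[v,1]$ precisely as Proposition~\ref{prop:B2_main3} localizes Proposition~\ref{prop:B(1)_main3} via $G(I)_1$ and $\gamma_x$, after invoking the PL analogue of Lemma~\ref{lem:classful_inf_int} to get classfulness of $H(I)$ in $F_n(I)$. You also correctly isolate and settle the one point where the analogy needs care --- replacing the parabolic/hyperbolic generator $f_v$ of the germ at $v$ by the unique slope-$n$ affine germ $t\mapsto v+n(t-v)$, so that all elements of $F_n(I)_1$ share fundamental domains near $v$ --- which is exactly what makes the transcription go through.
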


The following is proved in the same manner as in the proof of Condition B(2) for the Thompson group $F$ given in \cite{MR4172581}.
\begin{proposition}\label{prop:appendix_B3}
  The group $F_n$ satisfies Condition B'(3) with respect to $X$.
\end{proposition}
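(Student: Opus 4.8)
The plan is to transport the monitoring-interval/information machinery of Propositions \ref{prop:B'(3)_main1} and \ref{prop:B3_main3} to the compact interval $[0,1]$, with $F_{n,1,-1}$ playing the role that $G_1$ plays there. First I would set up the analog of the sets $\I_x(g)$. An element $g \in F_{n,1,-1}$ acts as $t \mapsto nt$ on its end linear zone at $0$ and as $t \mapsto 1 + n^{-1}(t-1)$ on its end linear zone at $1$, so the $X$-intervals $[n^{-i-1}, n^{-i}]$ accumulate at $0$ and the $X$-intervals $[1-n^{-j}, 1-n^{-j-1}]$ accumulate at $1$, each being a fundamental domain for the relevant end dynamics. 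Identifying these with $[0,1]$ via the orientation-preserving affine coordinates (the maps $\Phi_j$ already defined near $1$, and the evident affine maps near $0$), I would call a fundamental domain near $0$ and one near $1$ \emph{monitoring intervals} for $g$ when some positive power of $g$ carries the former onto the latter, and define the \emph{information} they monitor to be the resulting self-homeomorphism of $[0,1]$ read off in these coordinates. Let $\I(g)$ denote the set of all informations of $g$.

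Next I would prove the two structural lemmas exactly as Lemmas \ref{lem:any_inf_monitored_main3} and \ref{lem:inf_class_inv_main3}. For the first, given an arbitrary self-homeomorphism $f$ of $[0,1]$ lying in $F_n$, I would build $g \in F_{n,1,-1}$ that shifts each fundamental domain near $0$ to the next one affinely except that on a single chosen domain it realizes $f$; then $f \in \I(g)$. For the second, I would show $\I(g^h) = \I(g)$ for all $g \in F_{n,1,-1}$ and $h \in F_n$ by pushing the monitoring intervals out into the end linear zones of $h$, where $h$ is affine with slopes $n^{a}$ at $0$ and $n^{b}$ at $1$; conjugation by $h$ then rescales both affine coordinate systems by exactly these slopes, and the rescalings cancel in the definition of the information. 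Combined with the classfulness of $H$, these two facts yield the analog of Corollary \ref{cor:any_info_classful_main3}: every such $f$ is an information of some $h \in H \cap F_{n,1,-1}$.

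Finally I would run the combination argument of Proposition \ref{prop:B3_main3}. Choose $h_0 \in H \cap F_{n,1,-1}$ with $\id \in \I(h_0)$, monitored by a fundamental interval $I_0$ near $0$ and one near $1$; this $X$-interval $I_0$ is the one required by Condition B'(3). Given $\hf \in G(I_0)$, transport it to a self-map $f$ of $[0,1]$ through the affine coordinate, realize $f \in \I(h_1)$ for some $h_1 \in H \cap F_{n,1,-1}$, record the two information identities (the analogs of (\ref{info_id_main3}) and (\ref{info_f_main3})), and multiply them as in (\ref{f=idf_main3}). The translation $h_2$ used there is now replaced by an element of $H$ whose end slopes are the prescribed powers of $n$ needed to intertwine the two affine coordinate systems; its existence is exactly what the classfulness of $H$ supplies. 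Substituting expresses $\hf$ as the restriction to $I_0$ of a product of powers of $h_0$, $h_1$, and this element of $H$, so $\hf \in H|_{I_0}$ and hence $H|_{I_0} = G(I_0)$.

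The hard part will be the end dynamics rather than the combinatorics. In Proposition \ref{prop:B3_main3} the ends are translations, so every coordinate change and the intertwining element are governed by a single additive shift; here the ends are affine with slopes $n$ and $n^{-1}$, so the coordinate changes are genuinely affine and one must carry slopes (powers of $n$) and breakpoints (in $\ZZ[1/n]$) correctly through every composition. The essential point is that the intertwining element can be taken inside $F_n$ with exactly the required end slopes and that all intermediate homeomorphisms remain in $F_n$; this follows from the orbit-realization construction already used for $\beta_x$ in the proof of Lemma \ref{lem:appendix_beta_surj}, itself modeled on \cite[Lemma 4.2]{cannon_floyd_parry}. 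Granting this, the remainder is the same bookkeeping as in Proposition \ref{prop:B3_main3}.
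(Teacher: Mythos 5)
Your proposal is correct and follows essentially the same route as the paper, which proves this proposition by deferring to the monitoring-interval/information argument for Condition B(2) of Thompson's group $F$ in \cite{MR4172581} — exactly the machinery of Propositions \ref{prop:B'(3)_main1} and \ref{prop:B3_main3} transported to $[0,1]$ with $F_{n,1,-1}$ in place of $G_1$, multiplicative fundamental domains at the endpoints, and an intertwining element of $H$ with prescribed end slopes supplied by classfulness. Your added care about carrying slopes in $n^{\ZZ}$ and breakpoints in $\ZZ[1/n]$ through the affine coordinate changes is precisely the adaptation the paper leaves implicit.
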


\begin{proof}[Proof of Theorem \textup{\ref{thm:appendix_main} (1)}]
  Theorem \ref{thm:conditions_PL},
  together with Propositions \ref{prop:appendix_A}, \ref{prop:appendix_B1}, \ref{prop:appendix_B2} and \ref{prop:appendix_B3},
  concludes that the Higman--Thompson group $F_n$ is invariably generated.
\end{proof}

\subsection{Proof of Theorem \ref{thm:appendix_main} (2)}
The \emph{golden ratio Thompson group} $F_{\tau}$ is the group of PL homeomorphisms of the interval $[0,1]$ whose breakpoints are in $\ZZ[\tau]$ and slopes are powers of $\tau$, where $\tau = (\sqrt{5}-1)/2$.
This group was introduced in \cite{MR1772420}.

In this section, we set $X = \ZZ[\tau] \cap (0,1)$.
We show that the group $F_{\tau}$ satisfies the Conditions A and B with respect to $X$ and use Theorem \ref{thm:MMthm_cond_inv_gen} to conclude its invariable generation.


\begin{proposition}\label{prop:appendix_A_gold}
  The group $F_{\tau}$ satisfies Condition A with respect to $X$.
\end{proposition}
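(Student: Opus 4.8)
The plan is to verify the four parts of Condition A in turn, using that $F_{\tau}$ is the full group of PL homeomorphisms of $[0,1]$ with slopes in $\tau^{\ZZ}$ and breakpoints in $\ZZ[\tau]$, and that $\tau$ is a unit of the ring $\ZZ[\tau]$ (indeed $\tau^{-1} = 1+\tau$, since $\tau^2 + \tau - 1 = 0$, so that $\tau^2 = 1 - \tau$). Condition A$(1)$ is immediate: $\ZZ[\tau]$ is dense in $\RR$ because $\tau$ is irrational, so $X$ is dense in $(0,1)$; $X$ contains every breakpoint of every element of $F_{\tau}$ by definition; and $X$ is $F_{\tau}$-invariant because an element $g \in F_{\tau}$ is, on each linear piece between consecutive breakpoints $a, b \in \ZZ[\tau]$, of the form $g(t) = g(a) + \tau^{k}(t-a)$ with $g(a) \in \ZZ[\tau]$ and $\tau^{k} \in \ZZ[\tau]$, whence $g(\ZZ[\tau]) \subset \ZZ[\tau]$ and, symmetrically, $g^{-1}(\ZZ[\tau]) \subset \ZZ[\tau]$.

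For Condition A$(2)$ I would argue through \emph{standard (golden) subdivisions} of $[0,1]$: starting from $[0,1]$ and repeatedly replacing a standard interval $[a,b]$ of length $\ell$ by $[a, a+\tau\ell]$ and $[a+\tau\ell, b]$, of lengths $\tau\ell$ and $\tau^{2}\ell$ (using $1-\tau = \tau^{2}$). The resulting subintervals all have length a power of $\tau$ and endpoints in $\ZZ[\tau]$. The essential input is Cleary's theorem \cite{MR1772420} that the vertices in $(0,1)$ of such subdivisions are exactly the points of $X$. Granting this, for $x, y \in X$ I would choose a standard subdivision of $[0,1]$ having $x$ as a vertex and another having $y$ as a vertex; these split $[0,x]$ and $[x,1]$ (resp.\ $[0,y]$ and $[y,1]$) into finitely many standard intervals. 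After refining further (each golden subdivision step raises the number of intervals by one) so that $[0,x]$ and $[0,y]$ are cut into the same number of standard intervals, and likewise $[x,1]$ and $[y,1]$, the PL homeomorphism matching corresponding subintervals affinely has slopes in $\tau^{\ZZ}$ and breakpoints in $\ZZ[\tau]$, hence lies in $F_{\tau}$, and sends $x$ to $y$. This is the same interval-matching construction already used in the proof of Lemma \ref{lem:appendix_beta_surj} via \cite[Lemma 4.2]{cannon_floyd_parry}. Thus $F_{\tau}$ acts transitively on $X$.

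Condition A$(3)$ is the routine extension-by-identity argument: given an $X^{*}$-interval $I = [a,b]$ and $h \in F_{\tau}$ with $h(I) = I$, the map equal to $h$ on $I$ and to the identity on $[0,1] \setminus I$ is PL with slopes in $\tau^{\ZZ}$ and breakpoints in $\ZZ[\tau]$ (the endpoints $a, b$ lie in $\ZZ[\tau]$ since $X^{*} \subset \ZZ[\tau]$), so it belongs to $F_{\tau}$, has support in $I$, and witnesses $h|_{I} \in F_{\tau}(I)$; the reverse inclusion $F_{\tau}(I) \subset F_{\tau}|_{I}$ is trivial. For Condition A$(4)$, given an $X^{*}$-interval $I = [a,b]$ I would again invoke \cite{MR1772420} to pick a standard subdivision of $[0,1]$ having both $a$ and $b$ as vertices, so that $I$ is cut into $p$ standard intervals; refining the trivial subdivision I also cut $[0,1]$ into $p$ standard intervals, and let $\psi_{I} \colon [0,1] \to I$ be the PL homeomorphism carrying the $k$-th interval of the latter affinely onto the $k$-th standard subinterval of $I$. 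Then $\psi_{I}$ has slopes in $\tau^{\ZZ}$ and breakpoints in $\ZZ[\tau]$, so both $\psi_{I}$ and $\psi_{I}^{-1}$ preserve $\ZZ[\tau]$ and hence $\psi_{I}(X^{*}) = X^{*} \cap I$; and since conjugation by such a $\psi_{I}$ preserves the properties of having slopes in $\tau^{\ZZ}$ and breakpoints in $\ZZ[\tau]$ while carrying support in $[0,1]$ to support in $I$, one checks directly that $F_{\tau}^{\psi_{I}} = F_{\tau}(I)$.

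The formal parts A$(1)$, A$(3)$, A$(4)$ are entirely parallel to the Thompson-group case treated in \cite{MR4172581}, so the main obstacle is Condition A$(2)$, and within it the only nonroutine ingredient is the combinatorial fact that golden subdivision reaches \emph{every} point of $\ZZ[\tau] \cap (0,1)$. Once this is supplied by \cite{MR1772420}, both the transitivity in A$(2)$ and the self-similarity needed in A$(4)$ follow from the standard interval-matching machinery, exactly as for $F$.
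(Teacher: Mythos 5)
Your proposal is correct and follows essentially the same route as the paper: both reduce everything to the fact (Cleary, \cite{MR1772420}) that standard golden subdivisions realize exactly the points of $X = \ZZ[\tau]\cap(0,1)$ as vertices, and then apply the interval-matching construction of \cite[Lemma 4.2]{cannon_floyd_parry} for A$(2)$ and A$(4)$, with A$(1)$ and A$(3)$ handled by the same routine observations. The paper's proof is simply more terse, delegating these verifications to the cited references, whereas you spell them out.
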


\begin{proof}
  Since $\tau^2 = 1 - \tau$, every element of $X$ can be written as $a + b\tau$ for some $a, b \in \ZZ$.
  Hence $X$ is a dense $F_{\tau}$-invariant subset of $(0,1)$.
  By the definition of $F_{\tau}$, all the breakpoints of every $f \in F_{\tau}$ are contained in $X$.
  Hence Condition A(1) holds.
  Condition A(2) is obtained from \cite[Corollary 1]{MR1772420} (see also \cite[Lemma 3.5]{2110.06286}).
  Note that, as with the Thompson group $F$ case, Condition A(2) can be shown by using the tree diagram (see \cite[Lemma 4.2]{cannon_floyd_parry} and \cite{MR4278765}).
  Condition A(3) is clear.
  For Condition A(4), every map $\psi_I$ whose breakpoints are contained in $\ZZ[\tau]$ and slopes are powers of $\tau$ is a desired one.
  Such a map exists by the same argument as in \cite[Lemma 4.2]{cannon_floyd_parry} or \cite[Corollary 1]{MR1772420}.
\end{proof}

We set $U = \{ (a,b) \in \ZZ^2 \colon 0 \leq a < b \}$.
\begin{lemma}\label{lem:unique_rep}
  For every $p \in X$, there exists a unique non-negative integer $j \in \ZZ_{\geq 0}$ and a unique element $(a,b) \in U$ such that
  \[
    p = (a + b\tau) \tau^j.
  \]
\end{lemma}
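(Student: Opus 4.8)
The plan is to reinterpret the asserted representation as a statement about the action of multiplication by $\tau$ on the positive elements of $\ZZ[\tau]$, and then to recognise $U$ as a fundamental domain for this action. Writing $w = a + b\tau$, multiplication by $\tau$ sends the coefficient vector $(a,b)$ to the coefficients of $\tau(a+b\tau) = b + (a-b)\tau$, i.e. to $(b, a-b)$; call this linear map $T$, so that $T$ has determinant $-1$ and infinite order. Since $\tau > 0$, the map $T$ preserves the set of $(a,b)$ with $a + b\tau > 0$. The claimed existence and uniqueness of $(j,(a,b))$ with $p = (a+b\tau)\tau^{j}$ is then exactly the assertion that every $T$-orbit of a positive element meets $U = \{(a,b) : 0 \le a < b\}$ in exactly one point. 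First I would note that $p \in X$ only enters through $p > 0$ and $p \neq 0$, so I will prove the statement for every positive element of $\ZZ[\tau]$.

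The main tool is the second real embedding. Let $\sigma$ be the nontrivial automorphism of $\QQ(\sqrt5)$, so that $\sigma(\tau) = -\phi$ where $\phi = \tau^{-1} = (1+\sqrt5)/2$ and $\tau\sigma(\tau) = -1$; write $\bar w = \sigma(w) = a - b\phi$. I would record $p\mapsto(w,\bar w)\in\RR^2$ and solve the linear system to get $a = (\tau\bar w + \phi w)/\sqrt5$ and $b = (w-\bar w)/\sqrt5$. Translating the two inequalities defining $U$ then gives
\[
  a \ge 0 \iff \bar w \ge -\phi^{2}\,w, \qquad a < b \iff \bar w < -\tau^{2}\,w,
\]
so that $U$ corresponds to the half-open sector $S = \{\,w>0 : -\phi^{2} \le \bar w/w < -\tau^{2}\,\}$, which lies entirely in the quadrant $w>0,\ \bar w<0$. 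In these coordinates multiplication by $\tau$ becomes the diagonal hyperbolic map $M\colon (w,\bar w)\mapsto(\tau w,\ -\phi\,\bar w)$, which preserves $\{w>0\}$ and, crucially, \emph{reverses the sign} of the second coordinate at each step.

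For the counting I would argue as follows. Because $M$ flips the sign of $\bar w$, for a nonzero positive element (so $\bar w \neq 0$) exactly the iterates $M^{k}$ of one parity of $k$ have negative second coordinate, and only these can land in $S$. On those iterates the relevant map is $M^{2}\colon(w,\bar w)\mapsto(\tau^{2}w,\ \phi^{2}\bar w)$, a positive hyperbolic scaling that fixes the quadrant $w>0,\bar w<0$ and multiplies the slope $\bar w/w$ by $\sigma(\tau)^2/\tau^2 = \phi^{4}$. Thus the magnitudes $|\bar w/w|$ of these iterates form an orbit of $\times\phi^{4}$ on $(0,\infty)$, and the slope window of $S$ has endpoints $\tau^{2}=\phi^{-2}$ and $\phi^{2}$, whose ratio is exactly $\phi^{4}$. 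Hence the half-open multiplicative interval $[\phi^{-2},\phi^{2})$ is precisely a fundamental domain for $\times\phi^{4}$, so exactly one such iterate falls in $S$. Combined with the unique parity choice this produces a unique $k\in\ZZ$ with $M^{k}(w,\bar w)\in S$, equivalently a unique $j$ and a unique $(a,b)\in U$, which is the claim.

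The step I expect to be the crux is the \emph{exact} bookkeeping in the previous paragraph: one must verify that the endpoints of the slope window coming from $0\le a$ and $a<b$ are $-\phi^{2}$ (closed) and $-\tau^{2}$ (open), and that this matches the $\phi^{4}$ expansion factor of $M^{2}$ on the nose, for it is this coincidence (together with the correct half-open convention, closed at $a=0$ and open at $a=b$) that makes $S$ a genuine fundamental domain rather than over- or under-counting. Everything else is routine: the identities $\tau^{2}=1-\tau$, $\phi = \tau+1$, $\tau\sigma(\tau)=-1$ give the embedding formulas, and the parity bookkeeping only uses that $\sigma(\tau)<0$.
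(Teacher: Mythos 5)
Your proof is correct, but it takes a genuinely different route from the paper's. The paper stays entirely inside the integers: writing $p=(a_j+b_j\tau)\tau^j$, the coefficient vectors are propagated by the Fibonacci matrix, i.e.\ $(a_{j+1},b_{j+1})=(a_j+b_j,\,a_j)$; since that matrix has nonnegative entries, once both coefficients are nonnegative they remain so, and since they are eventually positive (via $a_j=r_{j+1}a_0+r_jb_0\sim r_{j+1}(a_0+b_0\tau)>0$) there is a first index $j$ at which $a_j,b_j\ge 0$. At that first index $a_j-b_j=b_{j-1}<0$, which is exactly membership in $U$, while at every later index $a_k-b_k=b_{k-1}\ge 0$, which is exactly non-membership; this gives existence and uniqueness simultaneously. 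You instead pass to the Galois embedding $w\mapsto(w,\bar w)$ and recognize $U$ as a half-open fundamental sector for the hyperbolic action of multiplication by $\tau$, with uniqueness coming from the sign-parity of $\bar w$ together with the fact that the slope window has multiplicative length exactly $\tau^{-4}$. Your approach buys a conceptual explanation of \emph{why} $U$ works (it is literally a fundamental domain) and proves the statement for every positive element of $\ZZ[\tau]$, not just those in $(0,1)$; the paper's recursion buys elementarity, avoiding the second embedding altogether. One small slip in your bookkeeping: the absolute values of the slopes of $S$ fill $(\tau^{2},\tau^{-2}]$ --- open at the endpoint corresponding to $a=b$ and closed at the endpoint corresponding to $a=0$ --- rather than $[\tau^{2},\tau^{-2})$ as you wrote; since exactly one endpoint is included either way, this does not affect the counting, and you rightly identify this endpoint convention as the only delicate point.
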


The proof of this lemma is almost the same as in the proof of \cite[Lemma 1]{MR1772420}.

\begin{proof}[Proof of Lemma \textup{\ref{lem:unique_rep}}]
  For $p \in X \subset \ZZ[\tau]$, we write $p = a_0 + b_0 \tau$ for (uniquely determined) $a_0, b_0 \in \ZZ$.
  By the equality $1 = \tau + \tau^2$, we can write $p$ as
  \[
    p = (a_j + b_j \tau) \tau^j
  \]
  for every $j \in \ZZ_{\geq 0}$, where $a_j, b_j \in \ZZ$.
  We note that for $j > 0$, the integers $a_j$ and $b_j$ are determined by
  \[
  \left(
   \begin{array}{c}
    a_j \\
    b_j
   \end{array}
  \right)
  =
   \left(
    \begin{array}{cc}
     1 & 1  \\
     1 & 0
    \end{array}
   \right)^j
   \cdot
   \left(
    \begin{array}{c}
     a_0 \\
     b_0
    \end{array}
   \right)
   =
   \left(
    \begin{array}{cc}
     r_{j+1} & r_j  \\
     r_j & r_{j-1}
    \end{array}
   \right)
   \cdot
   \left(
    \begin{array}{c}
     a_0 \\
     b_0
    \end{array}
   \right),
  \]
  where $\{ r_n \}_{n \geq 0}$ is the Fibonacci sequence with $r_{0} = 0$ and $r_1 = 1$.
  For a large $j$, the integers $a_j$ and $b_j$ become positive.
  Indeed, since $a_0 + b_0\tau > 0$ and the ratio $r_{j+1}/r_j$ tends to $\tau^{-1}$ as $j$ tends to infinity, we have
  \[
    a_j = r_{j+1}a_0 + r_jb_0 \sim r_{j+1} (a_0 + b_0 \tau) > 0
  \]
  and
  \[
    b_j = r_j a_0 + r_{j-1} b_0 \sim r_j (a_0 + b_0 \tau) > 0.
  \]
  If $a_j,b_j \geq 0$, then $a_k, b_k \geq 0$ for every $k \geq j$.
  Note that exactly one of $a_0$ and $b_0$ is positive since $0 < a_0 + b_0\tau < 1$.
  Hence there exists a unique $j \in \ZZ_{\geq 0}$ such that there are no $i < j$ with $a_i, b_i \geq 0$.
  For this $j \in \ZZ_{\geq 0}$, the tuple $(a_j, b_j)$ is contained in $U$.
  Indeed, $a_j - b_j = b_{j-1}$ must be negative if $j>0$.
  If $j = 0$, then $(a_0, b_0) = (0,1) \in U$ since $0 < a_0 + b_0\tau < 1$ and $\tau > 1/2$.
  Moreover, for every $k\neq j$, the tuple $(a_k, b_k)$ is not contained in $U$.
  These uniquely determined $j, a_j$ and $b_j$ are the desired $j, a$ and $b$ in the statement.
\end{proof}

Let $\alpha \colon F_{\tau} \to \ZZ^2$ be a map defined by
\[
  \alpha(f) = (\log_{\tau^{-1}}f'(0), \log_{\tau^{-1}}f'(1)).
\]
This is a homomorphism and a class function, i.e., $\alpha(f^g) = \alpha(f)$ for every $f,g \in F_{\tau}$.
We set
\[
  F_{\tau, 1, -1} = \{ f \in F_{\tau} \colon \alpha(f) = (1,-1) \text{ and } f(x) > x \text{ for every } x \in (0,1) \}.
\]

For $g \in F_{\tau, 1, -1}$, we take a point of the form $\tau^l$ contained in the end linear zone of $g$ at $0$.
Let us consider the $g$-orbit of $\tau^l$.
Since $g$ is strictly monotonously increasing, $g^n(\tau^l)$ tends to $1$ as $n$ tends to $\infty$.
We take the smallest integer $n$ such that $g^n(\tau^l)$ is contained in the end linear zone of $g$ at $1$.
Then by Lemma \ref{lem:unique_rep}, there exists a unique non-negative integer $j_g \in \ZZ_{\geq 0}$ and a unique element $(a_g,b_g) \in U$ such that
\[
  g^n(\tau^{l}) = 1 - (a_g + b_g \tau) \tau^{j_g}.
\]
Note that $j_g, a_g$ and $b_g$ are independent of the choice of $l$ since such $\tau^l$'s are contained in the same $g$-orbit.
Hence we can obtain a map $\beta \colon F_{\tau, 1, -1} \to U$ by setting
\[
  \beta(g) = (a_g,b_g).
\]

By using this $\beta$, we can show the following in the same way as in the case of the Thompson group $F$ given in \cite{MR4172581}.
\begin{proposition}\label{prop:appendix_B_gold}
  The group $F_{\tau}$ satisfies Condition B with respect to $X$.
\end{proposition}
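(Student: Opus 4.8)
The plan is to verify Conditions B(1) and B(2) separately, following the template Matsuda--Matsumoto used for Thompson's group $F$ in \cite{MR4172581}, with the normal form of Lemma \ref{lem:unique_rep} and the index set $U$ playing the bookkeeping role that the dyadic exponent count plays for $F$. The one structural input needed beyond the definition of $\beta$ is that $\beta \colon F_{\tau, 1, -1} \to U$ is a class function and is surjective. Both are proved exactly as Lemmas \ref{lem:beta_class_inv} and \ref{lem:surj_main3}: class-invariance because $\alpha$ is a class function and the condition $f(x) > x$ is preserved under conjugation by an increasing homeomorphism, and surjectivity by the interpolation construction of \cite[Lemma 4.2]{cannon_floyd_parry}. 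Since a classful $H$ meets the conjugacy class of every $g \in F_{\tau, 1, -1}$, and conjugation keeps an element inside $F_{\tau, 1, -1}$ (the two defining conditions are conjugation invariant) while preserving its $\beta$-value, it follows that $\beta$ restricted to $H \cap F_{\tau, 1, -1}$ is already surjective onto $U$.

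For B(1), I fix a classful $H$ and a point $x \in X$; since $G = F_{\tau}$ acts transitively on $X$ by Proposition \ref{prop:appendix_A_gold}, we have $G \cdot x = X$, so it suffices to show $H$ is transitive on $G \cdot x$. Fix $g_0 \in F_{\tau, 1, -1}$. Given $y \in G \cdot x$, pick any $h_1 \in H \cap F_{\tau, 1, -1}$ and a large power $k$ so that $h_1^{k}(y)$ enters the end linear zone at $1$, where $h_1$ acts by the affine map $t \mapsto 1 - \tau(1-t)$, so that the distance to $1$ scales by $\tau$ under each further application. Writing this landing point in the $U$-address of Lemma \ref{lem:unique_rep} and using surjectivity of $\beta$ on $H \cap F_{\tau, 1, -1}$, I choose $h_2 \in H \cap F_{\tau, 1, -1}$ whose $\beta$-value matches that $U$-coordinate. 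Then, exactly as in Proposition \ref{prop:B(1)_main3}, the products $h_2^{-n_2} h_1^{n_1}$ carry $y$ onto an entire tail of the reference sequence $\{ g_0^{-n}(x) \}_{n \geq n_0}$ accumulating at $0$. As this tail lies in $H \cdot y$ for every $y \in G \cdot x$, all such $H$-orbits coincide, and $H$ is transitive on $X$.

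For B(2), I would reproduce the information-monitoring argument of \cite{MR4172581}, the PL analogue of Proposition \ref{prop:B'(3)_main1}. Using class-invariance of the information of elements of $F_{\tau, 1, -1}$, one first extracts from $H$ an element $h_0$ whose information over a suitable window $X$-interval $J$ is the identity; then, given an arbitrary $\hat f \in G(J)$, the realizability argument behind Lemma \ref{lem:appendix_beta_surj} produces $h_1 \in H \cap F_{\tau, 1, -1}$ carrying $\hat f$ as information, and a germ-matching element $h_2 \in H$ supplied by classfulness lets one write $\hat f = (h_0^{p} h_2^{-1} h_1^{q} h_2 h_0^{r})|_{J}$ for suitable integers $p, q, r$, just as in the display preceding Proposition \ref{prop:B'(3)_main1}. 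This gives $G(J) \subset H|_J$, while the reverse inclusion $H|_J \subset G|_J = G(J)$ is Condition A(3). Hence $H|_J = G(J)$, which is B(2); together with B(1) this establishes Condition B.

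The main obstacle is not the combinatorial bookkeeping, which transfers verbatim from the $F$ case, but the $\ZZ[\tau]$-arithmetic underlying it: one must know that every interpolating PL map carrying the prescribed germ data can be realized with breakpoints in $\ZZ[\tau]$ and slopes powers of $\tau$, and that landing positions near $1$ admit the canonical address $(a,b) \in U$ of Lemma \ref{lem:unique_rep}. The relation $\tau^2 = 1 - \tau$ turns this into a two-dimensional count governed by the Fibonacci matrix rather than a single dyadic exponent, which is precisely why the target of $\beta$ is $U \subset \ZZ^2$. Once Lemma \ref{lem:unique_rep} guarantees a unique address and the Cannon--Floyd--Parry interpolation supplies realizability, the dynamical argument above runs exactly as in \cite{MR4172581} for $F$.
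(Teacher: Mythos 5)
Your proposal is correct and follows essentially the same route as the paper: the paper's proof of this proposition is precisely the one-line deferral to the Matsuda--Matsumoto template, using the class-invariance and surjectivity of $\beta \colon F_{\tau,1,-1} \to U$ for Condition B(1) and the information-monitoring argument for Condition B(2), exactly as you describe. The only cosmetic difference is that the natural reference sequence near $0$ here is $\{\tau^j\}$ rather than $\{g_0^{-n}(x)\}$, which does not affect the argument.
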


\begin{proof}[Proof of Theorem \textup{\ref{thm:appendix_main} (2)}]
  Theorem \ref{thm:MMthm_cond_inv_gen}, together with Propositions \ref{prop:appendix_A_gold} and \ref{prop:appendix_B_gold}, concludes that $F_{\tau}$ is invariably generated.
\end{proof}

\bibliographystyle{amsalpha}
\bibliography{references.bib}
\end{document}